\newtheorem{thm}{Theorem}[section]
\newtheorem{corollary}[thm]{Corollary}
\newtheorem{lemma}[thm]{Lemma}
\newtheorem{proposition}[thm]{Proposition}
\newtheorem*{thm*}{Theorem}
\newtheorem*{corollary*}{Corollary}
\newtheorem*{lemma*}{Lemma}
\newtheorem*{ld*}{Lemma/Definition}
\newtheorem*{proposition*}{Proposition}
\theoremstyle{definition}
\newtheorem{definition}[thm]{Definition}
\newtheorem{remark}[thm]{Remark}
\newtheorem{example}[thm]{Example}
\newtheorem*{definition*}{Definition}
\newtheorem*{example*}{Example}
\newtheorem*{xca*}{Exercise}
\newtheorem*{claim*}{Claim}
\newtheorem*{fact*}{Fact}
\newtheorem*{notation*}{Notation}
\newtheorem*{construction*}{Construction}
\newtheorem*{ack*}{Acknowledgements}
\newtheorem*{question*}{Question}
\newtheorem*{problem*}{Problem}
\newtheorem*{conjecture*}{Conjecture}
\newtheorem*{assumption*}{Assumption}
\newcommand{\C}{\mathbb{C}}
\newcommand{\Z}{\mathbb{Z}}
\newcommand{\Q}{\mathbb{Q}}
\newcommand{\R}{\mathbb{R}}
\newcommand{\pr}[1]{\mathbb P^{#1}}
\newcommand{\HHom}{\mathcal{H}om}
\DeclareMathOperator{\id}{id}
\DeclareMathOperator{\rk}{rk}
\DeclareMathOperator{\SL}{SL}
\DeclareMathOperator{\Hom}{Hom}
\DeclareMathOperator{\Ext}{Ext}
\DeclareMathOperator{\Sym}{Sym}
\DeclareMathOperator{\Ch}{ch}
\DeclareMathOperator{\td}{td}
\DeclareMathOperator{\im}{Im}
\DeclareMathOperator{\Gr}{Gr}
\DeclareMathOperator{\Pic}{Pic\,}
\DeclareMathOperator{\Hilb}{Hilb}
\DeclareMathOperator{\Stab}{Stab}
\DeclareMathOperator{\Coh}{Coh}
\newcommand{\sA}{\mathcal{A}}
\newcommand{\sB}{\mathcal{B}}
\newcommand{\sH}{\mathcal{H}}
\newcommand{\sO}{\mathcal{O}}
\newcommand{\sK}{\mathcal{K}}
\newcommand{\sY}{\mathcal{Y}}
\newcommand{\sL}{\mathcal{L}}
\newcommand{\sS}{\mathcal{S}}
\newcommand{\sT}{\mathcal{T}}
\newcommand{\sM}{\mathcal{M}}
\newcommand{\sC}{\mathcal{C}}
\newcommand{\sD}{\mathcal{D}}
\newcommand{\sE}{\mathcal{E}}
\newcommand{\sF}{\mathcal{F}}
\newcommand{\sG}{\mathcal{G}}
\newcommand{\st}{\,|\,}					
\DeclarePairedDelimiter{\set}{\lbrace}{\rbrace}
\DeclarePairedDelimiter{\pair}{\langle}{\rangle}
\DeclarePairedDelimiter{\abs}{\lvert}{\rvert}
\newcommand{\Ku}{\mathsf{Ku}}
\renewcommand{\P}{\mathbb{P}}
\newcommand{\sI}{\mathcal{I}}
\newcommand{\bigslant}[2]{\left.\raisebox{.2em}{$#1$}\middle/\raisebox{-.2em}{$#2$}\right.}
\title{Moduli spaces on the Kuznetsov component of Fano threefolds of index 2}
\author{Matteo Altavilla}
\address{Department of Mathematics, University of Utah, Salt Lake City,
  UT 84102, USA} 
\email{altavilla@math.utah.edu}
\author{Marin Petkovi\'{c}}
\address{Department of Mathematics, University of Utah, Salt Lake City,
UT 84102, USA} 
\email{petkovic@math.utah.edu}
\author{Franco Rota}
\address{School of Mathematics and Statistics, University of Glasgow, University Place, Glasgow,
G12 8QQ, UK} 
\email{franco.rota@glasgow.ac.uk}
\begin{document}



\maketitle

\begin{prelims}

\DisplayAbstractInEnglish

\bigskip

\DisplayKeyWords

\medskip

\DisplayMSCclass







\end{prelims}


\newpage

\setcounter{tocdepth}{1}

\tableofcontents


\section{Introduction}\label{sec_intro}

Stability conditions on triangulated categories were introduced by Bridgeland in \cite{Bri07_triang_cat}. 
A remarkable feature of Bridgeland's construction is that the set $\Stab(\sD)$ of all stability conditions admits a natural topology, and is endowed with the structure of a complex manifold \cite[Theorem~1.2]{Bri07_triang_cat}. Even when $\sD=D^b(\Coh(X))$ is the bounded derived category of a smooth projective variety $X$, it is challenging to study $\Stab(\sD)$, and questions about its non-emptiness, connectedness, or simple-connectedness are difficult problems. 

 
On the other hand, stability conditions allow for the construction of moduli spaces of stable objects of $\sD$. It is interesting to investigate properties of these moduli spaces, like non-emptiness, irreducibility, smoothness, or projectivity, or to relate them to other classical moduli spaces: for example, Bridgeland stability regulates the birational geometry of the Hilbert scheme of points of surfaces (see \cite{ABCH13} for the case of $\pr 2$, and \cite{BC13} for other surfaces), or of moduli spaces of sheaves on a K3 surface \cite{BM14_K3}.

If $\sD$ admits an exceptional collection, Bayer, Lahoz, Macr\`i, and Stellari give a sufficient condition to induce a stability condition on the right orthogonal of the collection, starting from a (weak) stability condition on $\sD$ \cite{BLMS17}. The criterion applies for example to cubic fourfolds and Fano threefolds of Picard rank one \cite{BLMS17} and to Gushel-Mukai varieties \cite{PPZ19}.

\medskip

In this paper, we consider smooth Fano threefolds $Y$ of Picard rank 1 and index 2: they belong to one of 5 families indexed by their degree $d\in\{1,\ldots,5\}$.
The derived category of $Y$ admits a semi-orthogonal decomposition $ D(Y)=\langle \Ku(Y), \sO_Y, \sO_Y(1) \rangle $ whose non-trivial part $\Ku(Y)$ is called the \textit{Kuznetsov component of $Y$}. Its \emph{numerical Grothendieck group} is a rank 2 lattice spanned by two classes $v$ and $w$ (see Section~\ref{sec_Fano_threefolds} for the details). 

The focus of this work is on moduli spaces of complexes of class $w$, with particular attention to the degree 2 and 1 cases.
A more detailed study of moduli of class $w$ for cubic threefolds appears in \cite{Bayer}; moduli of objects of class $v$ are studied in \cite{PY20} (for degrees $d\geq 2$) and in \cite{PR20_veronese} for the case of degree 1.

\subsection*{Summary of the results}

The main result of our work is a description of the moduli space $\sM_\sigma(w)$, which parametrizes complexes in $\Ku(Y)$ of class $w$, semistable with respect to one of the stability conditions $\sigma$ constructed in \cite{BLMS17}. Throughout the paper, we work under some mild generality assumptions for $Y$, to control the singularities that may appear in the hyperplane sections of $Y$ (see Section~\ref{sec_hyperplane sections+generality}) and to rely on previous results of \cite{Wel81,Tih82}.

First, we study the moduli spaces of Gieseker-stable sheaves on $Y$ of class $w$. By a \emph{root} of $Y$ we mean a sheaf of the form  $\iota_*(\sO_S(D))$, with $S\in \abs{\sO_Y(1)}$ and $D\in \Pic(S)$ satisfying $D^2=-2$ and $D\cdot H_{|S}=0$ (see Definition~\ref{def_of_root_of_S}). We have:

\begin{proposition}[\emph{cf.} Proposition~\ref{gieseker}]
Let $Y$ be a general Fano threefold of Picard rank 1 and index 2. The moduli $\sM_H(w)$ of Gieseker-semistable objects of class $w$ on $Y$ has two irreducible components. 

One component, denoted $\mathcal{P}$, has dimension $d+3$ and parametrizes ideal sheaves  $I_{p|S}$ of a point in a hyperplane section $S$ of $Y$. A point $[I_{p|S}]\in \mathcal P$ is smooth in $\sM_H(w)$ iff $p$ is a smooth point of $S$.
The second component has dimension $d+1$ and its smooth points parametrize roots of $Y$.

Both components are smooth outside their intersection, which is the locus parametrizing $I_{p|S}$, with $S$ singular at~$p$.
\end{proposition}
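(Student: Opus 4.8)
The plan is to first classify all Gieseker-semistable sheaves of class $w$, then organize them into the two claimed families and compute the relevant deformation spaces to control dimension and smoothness. Since $w$ has zero rank with degree-two part $H$, any semistable $F$ of class $w$ is pure of dimension $2$ with support a hyperplane section $S\in|H|$; under the generality assumptions $S$ is integral with at worst nodal (ADE) singularities, so $F$ is a rank-$1$ torsion-free sheaf on $S$, hence $F=\iota_*(\sI_Z(D))$ for some $D\in\Pic(S)$ and a $0$-dimensional $Z$. I would then apply Grothendieck--Riemann--Roch to the divisorial embedding $\iota\colon S\hookrightarrow Y$, using $N_{S/Y}=\sO_S(H)$ and the adjunction identity $K_S=(K_Y+S)|_S=-H_{|S}$. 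Matching the result against $w$ forces $D\cdot H_{|S}=0$ and $D^2=2\operatorname{len}(Z)-2$. Since $D\in K_S^{\perp}$ and this lattice is negative definite for a del Pezzo surface, the only solutions are $(\operatorname{len}Z,D)=(0,D)$ with $D$ a root, or $(1,0)$; that is, $F$ is either a root $\iota_*\sO_S(D)$ or an ideal sheaf $I_{p|S}$.

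Next I would set up the two families. The ideal sheaves are parametrized by the universal hyperplane section $\PP=\{(S,p):p\in S\}$, which is a $\P^{d}$-bundle over $Y$, hence smooth and irreducible of dimension $d+3$; the assignment $(S,p)\mapsto I_{p|S}$ is injective because the support recovers $S$ and the non-locally-free locus recovers $p$. The roots form a family $\sR$ finite over $|H|$, with finitely many roots over each smooth $S$; since the monodromy of the family of hyperplane sections is the full Weyl group and acts transitively on roots, $\sR$ is irreducible of dimension $\dim|H|=d+1$. To compute tangent and obstruction spaces I would push every $\Ext$ on $Y$ down to $S$ via the standard splitting $L\iota^*\iota_*A\cong A\oplus(A\otimes N_{S/Y}^{\vee})[1]$, which yields $\Ext^i_Y(\iota_*A,\iota_*A)\cong\Ext^i_S(A,A)\oplus\Ext^{i-1}_S(A,A(H_{|S}))$.

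For $A=\sO_S(D)$ on a smooth $S$ this gives $\Ext^1_Y\cong H^0(-K_S)=\C^{d+1}$ and $\Ext^2_Y=0$ (using $H^\bullet(\sO_S)=(\C,0,0)$ and $H^{\geq1}(-K_S)=0$), so $\sM_G(w)$ is smooth of dimension $d+1$ along root sheaves; this identifies the smooth locus of $\sR$. For $A=\sI_p$ with $p$ a smooth point of $S$ the same formula gives $\Ext^1_Y=\C^{d+3}$ but $\Ext^2_Y=\C^{2}\neq0$. To see the space is nevertheless smooth, I would use the tautological family over $\PP$ to produce a morphism $\PP\to\sM_G(w)$ whose Kodaira--Spencer map $T_{(S,p)}\PP\to\Ext^1_Y(I_{p|S},I_{p|S})$ is injective (deforming either $S$ or $p$ changes the sheaf to first order) between spaces of equal dimension $d+3$, hence an isomorphism; since $\PP$ is smooth, so is $\sM_G(w)$ at $[I_{p|S}]$, where it coincides with $\PP$. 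Thus the obstruction classes in $\Ext^2$ vanish even though that group is nonzero.

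It remains to treat the locus where $p\in\operatorname{Sing}(S)$, and this is where I expect the main difficulty. When $S$ acquires a node at $p$, redoing the local computation at the $A_1$-singularity introduces an extra summand in $\Ext^1_Y(I_{p|S},I_{p|S})$, so the tangent space jumps (by the amount expected for a transverse meeting of the two branches) and $[I_{p|S}]$ becomes a singular point of $\sM_G(w)$. The geometric mechanism should be that, as a smooth $S'$ degenerates to the nodal $S$, the vanishing $(-2)$-class becomes the exceptional curve of the minimal resolution and the corresponding root line bundle $\iota_*\sO_{S'}(D')$ degenerates, in the flat limit, precisely to $I_{p|S}$. The hard part is exactly this identification: one must verify, via an explicit local model such as $xy=z^2+t$ at the node, that the flat limit of the vanishing-root sheaf is the non-reflexive ideal $I_{p|S}$ and not the nontrivial reflexive rank-$1$ sheaf coming from the $\Z/2$ local class group of the $A_1$ point, and that among the finitely many roots only this single vanishing root degenerates. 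Granting this, the intersection $\PP\cap\sR$ is the discriminantal locus $\{I_{p|S}:S\text{ singular at }p\}$ of dimension $d$, and each component is smooth away from it, which completes the proof.
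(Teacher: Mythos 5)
Your overall strategy coincides with the paper's: purity forces $F=\iota_*(I_Z\otimes\sO_S(D))$ on a hyperplane section, Grothendieck--Riemann--Roch plus negativity of $K_S^{\perp}$ classifies the possibilities, adjunction along $\iota$ gives the $\Ext$-computations controlling dimension and smoothness, and flat limits of root sheaves identify the component intersection (your monodromy argument for irreducibility of the root component is a nice addition the paper leaves implicit). However, there are two genuine gaps, both concentrated exactly where the paper does real work: at singular hyperplane sections. First, your classification is incomplete. You assert that generality forces $S$ to have at worst ADE singularities and that $F=\iota_*(\sI_Z(D))$ with $D\in\Pic(S)$. Neither holds under the proposition's hypotheses: for $d\geq 3$ only smoothness of $Y$ is assumed, and Lemma \ref{lem_hyperplane_sections} allows $S$ to degenerate to a cone over an elliptic curve; moreover, on a normal singular $S$ a rank-one torsion-free sheaf has reflexive hull $\sO_S(D)$ for a \emph{Weil} divisor $D$, not necessarily Cartier. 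For Weil divisors $D^2$ is only rational (half-integral already at an $A_1$ point), so the ``even negative-definite lattice'' dichotomy giving only $(\mathrm{len}\,Z,D)=(1,0)$ or $(0,\text{root})$ breaks down: one must exclude a middle case with $\Ch_2(\sO_S(D))=-1$ and $D$ not a root. The paper does this via Riemann--Roch on normal surfaces with the correction term $\delta_S(D)\leq 0$, a parity argument for $(p^*D)^2$ on the crepant resolution in the ADE case, and the bound $\delta_S(D)\leq -1$ in the cone case. For the same reason your smoothness computation for root sheaves covers only smooth $S$; roots supported on singular sections are genuine points of the second component away from the intersection, and their smoothness requires the reflexive-sheaf computation of Lemma \ref{dimensiond+1}.

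Second, you explicitly grant the degeneration statement (``the flat limit of the vanishing-root sheaf is $I_{p|S}$''), but this is precisely what is needed to show the intersection of the two components \emph{equals} the locus of $I_{p|S}$ with $p\in\mathrm{Sing}(S)$, rather than merely being contained in it (containment follows from your smoothness statements). The paper supplies this in Example \ref{explicitfamily} by a global construction: a family of hyperplane sections over a disc obtained by blowing up $9-d$ moving points of $\pr 2$, three of which become collinear, together with the explicit divisor class $D=-g^*\sO_{\pr 2}(1)+\sE_1+\sE_2+\sE_3$, whose pushforward is a flat family with root sheaves as general fibers and $I_{p|S_0}$ as the central fiber; the elliptic-cone case is then handled separately by properness of $\sM_G(w)$ together with the absence of roots in $\mathrm{Cl}(S_0)$. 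So your proposal proves the classification and smoothness assertions over the locus of smooth sections, but the statements involving singular sections --- completeness of the classification, smoothness of all root points, and the identification of the intersection --- remain open without these additional arguments.
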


We relate $\sM_H(w)$ and $\sM_\sigma(w)$ via wall-crossing: there is a two-parameter family $\sigma_{\alpha,\beta}$ of weak stability conditions on $D^b(Y)$. We show that the moduli space of $\sigma_{\alpha,\beta}$-stable complexes of class $w$ is isomorphic to $\sM_H(w)$ (resp. to $\sM_\sigma(w)$) for $\beta=-\frac12$ and $\alpha \gg 0$ (resp. $0< \alpha \ll 1 $) in Proposition~\ref{prop_tilt} and Proposition~\ref{kuzistilt}. 
Then, we deform the parameter $\alpha$: there is a unique wall separating $\sM_H(w)$ and $\sM_\sigma(w)$ and the universal family of $\sM_H(w)$ induces a wall-crossing morphism, defined at the level of sets by replacing objects $I_{p|S}$ 
with complexes defined by triangles 
\begin{equation}\label{eq_def_of_Ep}
\sO(-1)[1] \to E_p \to I_p
\end{equation} (Proposition \ref{prop_GiesekerVSTilt}). In this way we obtain a general description for all $d$:

\begin{thm}[\emph{cf.} Theorems \ref{thm_Section2}, \ref{thm_moduliY2}, and \ref{thm_moduliY1}]\label{introduction_1}
Let $Y$ be a general Fano threefold of Picard rank 1 and index 2. Then, the moduli space $\sM_\sigma(w)$ is proper. It contains two smooth subvarieties $\sY$ and $\sC$. 
The locus $\sY$ is isomorphic to $Y$ and it parametrizes objects $E_p$ of the form \eqref{eq_def_of_Ep}, while the locus $\sC$ has dimension $d+1$ and parametrizes roots of $Y$. 

For $d\geq 3$, $\sM_\sigma(w)$ is projective and irreducible and $\sY$ is contained in the closure of $\sC$; $\sY$ and $\sC$ are distinct irreducible components for degree $d\leq 2$. 
\end{thm}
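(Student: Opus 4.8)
The plan is to read off the whole of $\sM_\sigma(w)$ from the single wall of Proposition~\ref{prop_GiesekerVSTilt}, transporting the explicit description of $\sM_G(w)$ from Proposition~\ref{gieseker} across the associated wall-crossing morphism $f\colon\sM_G(w)\to\sM_\sigma(w)$ (Propositions~\ref{prop_tilt} and \ref{kuzistilt} identify the two ends of the $\alpha$-family with $\sM_G(w)$ and with $\sM_\sigma(w)$). Properness of $\sM_\sigma(w)$ I would obtain from the general theory: the stability condition $\sigma$ of \cite{BLMS17} satisfies the support property, so the moduli space of $\sigma$-semistable objects of class $w$ is a proper algebraic space over $\C$; equivalently, once $f$ is seen to be surjective via the wall-crossing analysis, $\sM_\sigma(w)$ is the image of the projective $\sM_G(w)$, hence proper. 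Writing $\mathcal{P}$ and $\sR$ for the two components of $\sM_G(w)$ (the ideal-sheaf locus and the root locus), I set $\sY\coloneqq f(\mathcal{P})$ and let $\sC\coloneqq f(\sR^\circ)$ be the image of the open locus $\sR^\circ\subset\sR$ of honest roots $\iota_*\sO_S(D)$ with $S$ smooth.

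The geometry of the two loci comes from analyzing $f$ on each component. First, $\mathcal{P}$ is the universal hyperplane section of $Y$, that is a $\P^{d}$-bundle over $Y$ whose fibre over $p$ is the system of sections $S\in|\sO_Y(1)|$ through $p$ (using $h^0(\sO_Y(1))=d+2$, so $|\sO_Y(1)|\cong\P^{d+1}$). Since the object $E_p$ of \eqref{eq_def_of_Ep} depends only on $p$, the morphism $f$ contracts each such $\P^{d}$ to $[E_p]$; thus $f|_{\mathcal{P}}$ is the bundle projection and $\sY\cong Y$ is smooth of dimension $3$, which I would confirm scheme-theoretically by checking that $\Ext^1_{\Ku(Y)}(E_p,E_p)$ is $3$-dimensional and identifies with $T_pY$. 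On the other hand roots are not destabilized at the wall, so $f$ restricts to an isomorphism $\sR^\circ\xrightarrow{\ \sim\ }\sC$; hence $\sC$ parametrizes roots of $Y$, is smooth of dimension $d+1$ (it is a finite cover of an open subset of $|\sO_Y(1)|$), inheriting smoothness from $\sR^\circ$ via Proposition~\ref{gieseker}, with the count $\dim\Ext^1_{\Ku(Y)}(\iota_*\sO_S(D),\iota_*\sO_S(D))=d+1$ pinning the dimension.

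The trichotomy in $d$ is governed by one elementary count: the hyperplane sections singular at a fixed general $p\in Y$ are cut out in $|\sO_Y(1)|\cong\P^{d+1}$ by the vanishing of a section together with its differential at $p$, that is by $4$ linear conditions, so they form a linear subsystem $\cong\P^{d-3}$, nonempty precisely when $d\ge 3$. For $d\ge3$ a general $p$ therefore carries a section $S$ nodal at $p$; the resulting $I_{p/S}$ lies in the intersection $\mathcal{P}\cap\sR\subset\overline{\sR^\circ}$, so $[E_p]=f([I_{p/S}])\in f(\overline{\sR^\circ})\subseteq\overline{\sC}$. As this holds for general $p$, we obtain $\sY\subseteq\overline{\sC}$, and since $f$ is surjective $\sM_\sigma(w)=\overline{\sC}$ is irreducible of dimension $d+1$; projectivity then follows from the Bayer--Macr\`i positivity construction of an ample class on the irreducible proper space \cite{BM14_K3}. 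For $d\le2$ no section is singular at a general $p$, so this degeneration is unavailable: $\sY$ is closed (the image of the projective $\mathcal{P}$), irreducible of dimension $3$, while $\overline{\sC}$ is irreducible of dimension $d+1\le3$ with generic object a genuine root. An inclusion $\sY\subseteq\overline{\sC}$ would then force equality of dimensions, hence $\sY=\overline{\sC}$, identifying a generic root with some $E_p$, which is absurd. Therefore $\sY$ and $\sC$ are distinct irreducible components of $\sM_\sigma(w)=\sY\cup\sC$.

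The hard part is the deformation-theoretic input underlying all of the above: computing the groups $\Ext^\bullet_{\Ku(Y)}$ at $E_p$ and at a root to fix the dimensions and smoothness of $\sY$ and $\sC$, and—most delicately—identifying the $\sigma$-stable (Jordan--H\"older) replacement of $\iota_*\sO_S(D)$ at a section nodal at $p$ precisely as the object $E_p$ of \eqref{eq_def_of_Ep}, so that the root family genuinely limits onto $\sY$ inside $\sM_\sigma(w)$. Controlling this step relies on the generality hypotheses on $Y$ (guaranteeing a single node on the relevant section and that the vanishing class is a root), and it is exactly here that the argument must be run case by case, which is why the statement is assembled from Theorems~\ref{thm_Section2}, \ref{thm_moduliY2} and \ref{thm_moduliY1}.
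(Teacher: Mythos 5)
Your overall wall-crossing strategy (transport Proposition~\ref{gieseker} across the single wall via Propositions~\ref{prop_tilt}, \ref{prop_GiesekerVSTilt}, \ref{kuzistilt}) is the same as the paper's, but there is a genuine gap at the central step: the scheme-theoretic identification $\sY\cong Y$. You propose to ``confirm scheme-theoretically'' that $\sY\cong Y$ by checking that $\Ext^1_{\Ku(Y)}(E_p,E_p)$ is $3$-dimensional and equal to $T_pY$. This is false in exactly the cases that matter. By Lemma~\ref{lem_numerics}, $\Ext^1(E_p,E_p)\simeq \Ext^1(I_p,I_p)\oplus V_p$ with $V_p\neq 0$ precisely when some hyperplane section is singular at $p$: for $d\leq 2$ this happens along the ramification locus $R$ (resp.\ the curve $C$), where $\Ext^1(E_p,E_p)=\C^4$, and for $d\geq 3$ your own count (the $\P^{d-3}$ of sections singular at $p$ is nonempty) shows $V_p\neq 0$ at \emph{every} point of $Y$ --- consistent with $\sY\subset\overline{\sC}$ and $\dim\sM_\sigma(w)=d+1\geq 4$. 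So the Zariski tangent space of the moduli space at $[E_p]$ is strictly bigger than $T_pY$ exactly where the scheme structure of $\sY$ is delicate, and no tangent-space computation can pin down $\sY$ as a subscheme. The paper's missing ingredient is functorial: Lemma~\ref{lem_Families_IpEp} uses Kuznetsov's base-change results for the mutation functors $\mathbb R_{\sO_Y(-1)}$, $\mathbb L_{\sO_Y(-1)}$ to produce an isomorphism between the moduli functor of families of objects $E_p$ and the functor of families of ideal sheaves $I_p$, which is represented by $Y$; Proposition~\ref{prop_GiesekerVSTilt} then shows the schematic image $\sY$ corepresents this functor, whence $\sY\cong Y$. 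Without this (or an equivalent argument), your identification of $\sY$ is only a bijection on closed points.

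Two secondary points. First, projectivity for $d\geq 3$: appealing to the positivity construction of \cite{BM14_K3} is not justified here --- that construction is established for K3 surfaces, not for stability conditions on $\Ku(Y)$ --- and the paper instead obtains projectivity from the explicit case-by-case identifications of Section~\ref{sec_higher_degrees} (quiver moduli for $d=5$, rank-$2$ bundles on a genus-$2$ curve for $d=4$, Gieseker moduli via \cite{Bayer} for $d=3$); see Remark~\ref{rmk_projectivity345}. By contrast, your uniform degeneration argument for $\sY\subseteq\overline{\sC}$ when $d\geq 3$ (general $p$ lies on a section nodal at $p$, and $I_{p|S}\in\mathcal P\cap\overline{\sR^\circ}$ by Example~\ref{explicitfamily}, so $E_p=f(I_{p|S})\in\overline{\sC}$) is correct and is a nice alternative to the paper's case-by-case treatment of irreducibility, though it yields properness, not projectivity. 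Second, ``image of the projective $\sM_G(w)$, hence proper'' presupposes that $\sM_\sigma(w)$ is separated, which is part of what must be proved; the paper's properness comes from stable${}={}$semistable for the primitive class $w$ together with \cite{AP06}, \cite{TP15} and the argument of \cite[Lemma 6.6]{BM14}.
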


For $d\leq 2$, we consider the intermediate Jacobian $J(Y)$ of $Y$ and study the Abel--Jacobi map
\[\Psi \colon \mathcal{M}_\sigma(w) \to J(Y)\]
sending a complex to its second Chern class (see Section \ref{sec_AJ_map} for the details). 


In degree 1, we show that $\sC\simeq F(Y)$, the Fano surface of lines of $Y$, and the intersection $\sY\cap F(Y)$ in $\sM_\sigma(w)$ is a curve $C$. The map $\Psi$ is an embedding outside $C$, and its image suffices to determine $Y$ \cite{Tih82}.

In degree 2, $Y$ is a double cover of $\pr 3$ ramified over a quartic K3 surface $R$. The intersection $\sY\cap \sC$ is isomorphic to $R$ (see Theorem \ref{thm_moduliY2}), $\Psi$ contracts the component $\sY$ to a point, and it is a generic embedding on the component $\sC$ (Corollary~\ref{cor_C_generically_embedded_by_AJ}).
Still in the case of degree 2, we apply our result to show a (refined) categorical Torelli theorem: 

\begin{thm}[\emph{cf.} Theorem \ref{thm_torelliY2}]\label{introduction_3}
Let $Y$ and $Y'$ be two general quartic double solids. There exists an equivalence $u \colon \Ku(Y') \simeq \Ku(Y)$ if and only if $Y'\simeq Y$.
 \end{thm}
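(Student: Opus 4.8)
The content of the theorem is the non-trivial implication: if there is an equivalence $u\colon\Ku(Y')\simeq\Ku(Y)$ then $Y'\simeq Y$ (the converse is immediate). The plan is to promote $u$ to an isomorphism of the moduli spaces $\sM_\sigma(w)$ and then to recover $Y$ from their component structure, distinguishing the Fano component $\sY\simeq Y$ from the roots component $\sC$ by a birational invariant.

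First I would check that $u$ carries the stability condition of \cite{BLMS17} to one in the same orbit. These stability conditions are Serre-invariant, and any equivalence commutes with the (intrinsic) Serre functors, so $u_*\sigma'$ is again a Serre-invariant stability condition on $\Ku(Y)$; invoking the known uniqueness of Serre-invariant stability conditions up to the $\GGL$-action, $u_*\sigma'$ lies in the orbit of $\sigma$. Consequently $u$ sends $\sigma'$-semistable objects to $\sigma$-semistable ones. On numerical Grothendieck groups $u$ acts as an isometry of the rank-two Euler lattice; since $w$ is (for $d=2$) a class of self-pairing $-2$, after composing $u$ with a shift and a suitable autoequivalence of $\Ku(Y)$ I would normalize $u_*w'=w$ without altering the moduli problem. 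Because such equivalences are of Fourier--Mukai type the construction works in families, and I expect to obtain an isomorphism of moduli spaces
\[
\phi\colon\sM_{\sigma'}(w')\;\xrightarrow{\ \sim\ }\;\sM_\sigma(w).
\]

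Next I would use Theorem~\ref{introduction_1}: for both $Y$ and $Y'$ the moduli space has exactly two smooth three-dimensional components, the Fano component ($\sY\simeq Y$, resp.\ $\sY'\simeq Y'$) and the roots component ($\sC$, resp.\ $\sC'$). The isomorphism $\phi$ permutes irreducible components and restricts to isomorphisms between the matched ones, so it suffices to show $\phi(\sY')=\sY$. The decisive point --- and what makes the degree-$2$ case genuinely different, since here both components have the same dimension --- is the Abel--Jacobi map: by Cor.~\ref{cor_C_generically_embedded_by_AJ} the map $\Psi$ embeds $\sC$ generically into the abelian variety $J(Y)$, whereas it contracts $\sY$ to a point. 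A non-constant morphism from a smooth proper variety to an abelian variety forces positive irregularity, so $q(\sC)>0$, while $q(\sY)=q(Y)=0$ since $Y$ is Fano. As irregularity is preserved under isomorphism, the $q=0$ component $\sY'\simeq Y'$ must be sent to the $q=0$ component, i.e.\ $\phi(\sY')=\sY$. Composing $Y'\simeq\sY'\xrightarrow{\ \phi\ }\sY\simeq Y$ then yields $Y'\simeq Y$.

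The step I expect to be the main obstacle is the first one: verifying that $u$ genuinely preserves the stability condition (the Serre-invariance together with the uniqueness of Serre-invariant stability conditions on $\Ku(Y)$ up to $\GGL$) and that the induced lattice isometry can be normalized to fix the class $w$. The Euler lattice a priori contains several classes of self-pairing $-2$, namely $\pm w$ and $\pm(w-2v)$, and one must confirm that this residual ambiguity is absorbed by autoequivalences and shifts without changing $\sM_\sigma(w)$. Once $\phi$ is in hand, the reconstruction is comparatively formal: the geometric descriptions assembled earlier (relying on \cite{Wel81,Tih82}) already exhibit the birational invariant that separates $\sY$ from $\sC$, so that --- in contrast with the classical approach --- no appeal to the Torelli theorem for the intermediate Jacobian is needed to complete the final deduction.
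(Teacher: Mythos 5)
There is a genuine gap, and it sits exactly at the point you flag as needing care but then dispose of in one clause: \emph{``Because such equivalences are of Fourier--Mukai type the construction works in families.''} This is not available. Whether every equivalence $\Ku(Y')\simeq\Ku(Y)$ is of Fourier--Mukai type is an open conjecture of Kuznetsov (\cite[Conj.~3.7]{kuz07HPD}); the paper explicitly notes that even combining Theorem \ref{thm_torelliY2} with \cite[Prop.~3.5]{BT16} does not settle it. Moreover, the entire content of the theorem relative to Bernardara--Tabuada \cite[Cor.~3.1(iii)]{BT16} is precisely the removal of the Fourier--Mukai hypothesis: if you assume it, you have assumed the known case. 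Without a kernel $\sG$ you cannot form $\Phi_{\sG}\times\id_{\sY'}$, so a pointwise bijection between stable objects (which is all your first step can deliver, however it is proved) does not upgrade to a morphism of moduli spaces, let alone an isomorphism --- a bijection on closed points between two threefolds need not come from any morphism. The paper closes this gap with the convolution technique of \cite[Sec.~5.2]{BMMS12}: resolve the universal family $\sE'$ by terms $\sO_{Y'}(-N_i)^{\oplus n_i}\boxtimes (L^{\otimes -r_i})^{\oplus s_i}$, apply the (merely exact) functor $F=\epsilon\circ u\circ\rho'$ termwise, and take the unique split right convolution (Lemma \ref{lem_unique_convolution}) to produce $\tilde\sE\in D^b(Y\times\sY')$ with fibers $u(E'_s)$; this uses no Fourier--Mukai kernel and is the technical heart of the proof.

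Two smaller remarks. First, your route to pointwise preservation of stability (Serre-invariance plus uniqueness of Serre-invariant stability conditions up to the $\GGL$-action) is not a result established in, or cited by, this paper; the paper instead argues directly with Ext-numerics in a heart of homological dimension $2$ (Lemmas \ref{ext1=3} and \ref{keylemma}, using $\tau$-invariance), and absorbs the class ambiguity $\pm w$, $\pm(2v-w)$ by the rotation functor $\mathsf R$ (Lemma \ref{numfix}), exactly the normalization you anticipate. Second, your final step is sound and essentially the paper's: you distinguish the components by irregularity ($\sC$ maps non-constantly to $J(Y)$ by Corollary \ref{cor_C_generically_embedded_by_AJ}, while $q(\sY)=0$), where the paper uses rational connectedness of $\sY'$ versus the failure of rational connectedness of $\sC$; both rest on the same corollary. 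Note also that the paper never needs an isomorphism of the full moduli spaces: a morphism $\alpha\colon\sY'\to\sM$ induced by $\tilde\sE$, factoring through a component, suffices.
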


Our proof technique is inspired by that of \cite{BMMS12}; we use the equivalence $u$ to construct an isomorphism between moduli spaces and argue that this is sufficient to conclude.

\subsection*{Related work and further questions}


In the case of degree 5, Theorem \ref{introduction_1} recovers the description of $Y_5$ within the Hilbert scheme of three points on $\pr 2$ \cite{Muk92}. If the degree is 4, $Y$ is the intersection of 2 quadrics: its Kuznetsov component is equivalent to the derived category of a genus 2 curve $C$ \cite{BO95} and Theorem \ref{introduction_1} recovers results of Reid's \cite{Rei72}, who shows that the Fano surface of lines on $Y$, the intermediate Jacobian of $Y$ (introduced in \cite{Clemens1972}), and the Jacobian variety of $C$ are isomorphic.

The main categorical techniques we use are developed in \cite{BLMS17, Kuz11_base_change}.

The general expectation is that the Kuznetsov component contains sufficient information to recover $Y$.
Reconstruction theorems are known to hold in a few cases: for example, $\Ku(X)$ determines $X$ if $X$ is a cubic fourfold (see \cite{BLMS17} for general $X$, and \cite{LPZ18} for all $X$) or if $X$ is an Enriques surface \cite{LNSZ19,LSZ21}.

In the case of Fano threefolds, four of the five families considered satisfy a refined Torelli theorem: $Y_5$ is rigid in moduli, so the statement is vacuously true. Degrees 4 and 3 are showed in \cite{BO95} and \cite{BMMS12} (\cite{PY20} gives an alternative proof of the cubic case). For degree 2, our result (Theorem \ref{introduction_3}) strengthens the statement of Bernardara--Tabuada \cite[Corollary~3.1(iii)]{BT16}, who show that the same holds under the additional assumption that the equivalence is of Fourier--Mukai type. 

\medskip

There are some natural questions which remain unanswered by this work.
\begin{enumerate}[(i)]
    \item (\textit{Torelli theorem for $d=1$}) It is expected that an analogue of Theorem \ref{introduction_3} holds in degree 1 as well. Unfortunately, the heart of the stability condition $\sigma$ has homological dimension 3 (instead of 2) in degree 1 \cite[Remark 2.6]{PR20_veronese}, so a different technique will be needed.
    \item (\textit{Equivalences of Fourier--Mukai type}) A conjecture of Kuznetsov \cite[Conjecture~3.7]{kuz07HPD} implies, in the case of quartic double solids, that every equivalence $u\colon \Ku(Y')\xrightarrow{\sim}\Ku(Y)$ is of Fourier--Mukai type (\textit{i.e.} that the composition $D(Y')\to \Ku(Y')\xrightarrow{u}\Ku(Y)\to D(Y) $ is a Fourier--Mukai functor). Combining Theorem \ref{introduction_3} with \cite[Proposition~3.5]{BT16} shows that $\Ku(Y')$ and $\Ku(Y)$ are equivalent if and only if they are Fourier--Mukai equivalent. However, this is not sufficient to prove the statement of  \cite[Conjecture~3.7]{kuz07HPD} for degree 2. 
    Interestingly, this conjecture is incompatible with Kuznetsov's conjecture on Fano threefolds (see \cite[Conjecture~3.7]{Kuz09_threefolds} for the conjecture, and \cite[Theorem 4.2]{BT16} for a proof of incompatibility). The conjecture on Fano threefolds has been recently disproven in \cite{Zha20}.

\end{enumerate}

\subsection*{Structure of the paper}
Section \ref{sec_preliminaries} recollects preliminary categorical notions, such as (weak) stability conditions (\ref{appendixstability}, \ref{sec_weak_stab_D(X)}), and base change of semi-orthogonal decompositions (\ref{sec_basechange_mutations}). 
Section \ref{sec_Fano_threefolds} is a brief description of smooth Fano threefolds of Picard rank 1 and index 2 and of the geometry of their hyperplane sections (\ref{sec_hyperplane sections+generality}). 

In Section \ref{sec_construction_of_moduli} we describe the moduli spaces of Gieseker-stable sheaves of class $w$ (Proposition~\ref{gieseker}), of $\sigma_{\alpha,\beta}$-stable complexes in $D^b(Y)$ (Proposition~\ref{prop_GiesekerVSTilt}) and of $\sigma$-stable complexes in $\Ku(Y)$ (Theorem \ref{thm_Section2}). We conclude with the definition, and some properties, of the Abel--Jacobi map $\Psi$ (Section \ref{sec_AJ_map}).

Section \ref{sec_higher_degrees} illustrates Theorem \ref{thm_Section2} on a case by case basis, comparing it with results known in the literature, and providing more details to the description of $\sM_\sigma(w)$ in the remaining cases of degree $d=2,1$.

Finally, Section \ref{sec_Torelli} contains the proof of the Torelli theorem for quartic double solids (Theorem \ref{thm_torelliY2}).

%

\subsection*{Acknowledgements} We are grateful to our doctoral advisor, Aaron Bertram, for his guidance and his enthusiasm. We wish to thank Arend Bayer for encouraging us in pursuing this problem. We also thank Paolo Stellari and Huachen Chen, for discussing this problem with us on various occasions. This project benefited from the participation of the second and third author to the workshop \emph{``Semi-orthogonal decompositions, stability conditions and sheaves of categories''} held in Toulouse in 2018 and of the first and third author to the workshop on \emph{``Derived Categories, Moduli Spaces and Deformation Theory''} held in Cetraro in 2019. We thank the organizers and the participants of these events. We are grateful to an anonymous referee for the invaluable feedback on a first version of this work. We thank Laura Pertusi, Song Yang, and Shizhuo Zhang for our conversations on these topics.

\section{Preliminaries}\label{sec_preliminaries}

\subsection{Weak stability conditions} \label{appendixstability}
In this section we refer to \cite{BLMS17} and briefly summarize definitions and results on (weak) stability conditions.

Let $\mathcal{D}$ be a triangulated category and $K(\sD)$ its Grothendieck group; fix a finite rank lattice $\Lambda$ and a surjective homomorphism $v \colon K(\sD) \twoheadrightarrow \Lambda$.
\begin{definition}
A full abelian subcategory $\mathcal{A}\subset \sD$ is called a \emph{heart of a bounded t-structure} if the following holds:
\begin{enumerate}[(a)]
    \item For all $E,\, F\in \mathcal{A}$ and $n<0$ we have $\Hom(E,F[n])=0$;
    \item For every $E\in \sD$ there exist a filtration, \textit{i.e.} objects $E_i\in \sD$, integers $k_1 > \dots > k_m$ and triangles
    \[
\xymatrix{0=E_0 \ar[r] & E_1\ar[d] \ar[r] &E_2 \ar[d]\ar[r] &\dots \ar[r] &E_{m-1} \ar[d]\ar[r]&E_m=E \ar[d]
\\ & A_1[k_1] \ar@{-->}[ul]& A_2[k_2] \ar@{-->}[ul]& & A_{m-1}[k_{m-1}] \ar@{-->}[ul]& A_m[k_m] \ar@{-->}[ul]}
\]
such that $A_i\in \mathcal{A}$; the $A_i$ objects are called the \emph{cohomologies} of $E$ and are denoted by $\sH^{-k_i}_{\sA}(E)$.
\end{enumerate}
\end{definition}

A \emph{weak stability function} on $\sA$ is a group homomorphism $Z\colon K(\mathcal{A})\to \C$ such that for all $E\in \sA$ we have $\Im Z(E) \geq 0$ and $\Im Z(E)=0$ implies $\Re Z(E) \leq 0$. A weak stability function $Z$ is a \emph{stability function} if moreover $\Im Z(E)=0$ implies $\Re Z(E) < 0$ for all $E\neq 0$.

A weak stability function has an associated slope:
\[
\mu_Z(E) = \begin{cases} -\dfrac{\Re Z(E)}{\Im Z(E)} \quad \text{if } \ \Im Z(E) > 0 \\ +\infty \quad \text{otherwise}\end{cases},
\]

\begin{definition} Let $\sD$ be a triangulated category, and let $\Lambda$ and $v$ be as above; a \emph{weak stability condition} on $\sD$ is a pair $\sigma=(\sA, Z)$ where $\sA$ is the heart of a bounded t-structure and $Z$ is a group homomorphism $Z \colon \Lambda \to \C$ , satisfying the following properties:
\begin{enumerate}[(a)]
    \item The composition $Z \circ v \colon K(\sA)=K(\sD) \to \Lambda \to \C$ is a weak stability function on $\sA$;
    \item All $E\in \sA$ have a Harder--Narasimhan filtration with factors $F_i\in \sA$ semistable with respect to $Z$, with strictly decreasing slopes; 
    \item There exists a quadratic form $Q$ on $\Lambda\otimes \R$ that is negative definite on $\ker Z$, such that $Q(v(E))\geq 0$ for $E$ semistable.
\end{enumerate}
If $Z$ is a stability function, then the pair $(\sA,Z)$ is a \emph{Bridgeland stability condition}.
\end{definition}
\begin{definition}

We say that an object $E\in\sA$ is \emph{(semi)stable} with respect to $\sigma$ if for every quotient $E\twoheadrightarrow Q$ in $\sA$ with $0\neq Q\neq E$ we have $\mu_Z(E)\leq\mu_Z(Q)$ (resp. $\mu_Z(E)<\mu_Z(Q)$).

An object $E\in\sD$ is called (semi)stable if there exists $m\in \Z$ such that $E[m]\in\mathcal{A}$ and $E[m]$ is (semi)stable, in which case $-m+\frac{1}{\pi}\arg Z(E)$ is the \emph{phase} of $E$. 
\end{definition}


\begin{definition}\label{tiltofheart}
Given a weak stability condition $\sigma$ and a real number $\mu$, the extension closure defined by $\sA^{\mu, \sigma} \coloneqq \langle \mathcal{T}^{\mu,\sigma}, \mathcal{F}^{\mu,\sigma}[1] \rangle$ where 
\begin{align*}
&\mathcal{T}^{\mu,\sigma}=\{E \in \sA \ | \text{ All HN factors $F$ of $E$ have slope $\mu_Z(F)>\mu$} \} \\
&\mathcal{F}^{\mu,\sigma}=\{E \in \sA \ | \text{ All HN factors $F$ of $E$ have slope $\mu_Z(F)\leq\mu$} \}
\end{align*}
is the heart of a bounded t-structure called a \emph{tilt} of $\sA$ \cite{HRS96,BLMS17}.
\end{definition}



\subsection{Weak stability conditions on $D^b(X)$}\label{sec_weak_stab_D(X)}

Let $X$ be a smooth projective variety of dimension $n$, and let $H$ be an ample divisor on $X$. We follow the setup of \cite[Section~2]{BLMS17} and recall the construction of weak stability conditions on $D^b(X)$.

For $j\in\set{0,\ldots,n}$ define lattices $\Lambda_H^j\simeq \Z^{j+1}$ generated by 
\[ \left(H^n \Ch_0, H^{n-1}\Ch_1,\ldots,H^{n-j}\Ch_j \right) \subset \Q^{j+1} \]
with the sujective map $v_H^j\colon K(X) \to \Lambda_H^j$ given by the Chern character. 
Then, Mumford slope gives rise to a weak stability condition $\sigma_H \coloneqq (\Coh(X),Z_\mu)$ where $Z_H\colon \Lambda_H^1\to \C$ is defined by 
\[ Z_H(E)\coloneqq -H^{n-1}\Ch_1(E) + i H^n\Ch_0(E). \]

We can define a new weak stability condition from the one above via tilting: for $\beta\in \R$, define $\Coh^\beta(X)$ to be the heart obtained from $\sigma_H$ via tilting at slope $\mu_H=\beta$. Before introducing the central charge, recall the notation $\Ch^\beta(E) \coloneqq  \Ch(E) \cdot e^{-\beta H}$. For convenience, we make the first three terms explicit:
\begin{equation*}
    \begin{split}
        \Ch_0^\beta = \Ch_0, \qquad 
        \Ch_1^\beta = \Ch_1 - \beta H\Ch_0, \qquad
        \Ch_2^\beta = \Ch_2 - \beta H \Ch_1 + \frac{\beta^2H^2}{2}\Ch_0.
    \end{split}
\end{equation*}

We have the following:

\begin{proposition}[{\cite[Proposition~2.12]{BLMS17}}] \label{def_tilt_stability}
For all $(\alpha,\beta)\in \R_{>0}\times \R$, the pair given by  $\sigma_{\alpha,\beta}=(\Coh^\beta(X), Z_{\alpha,\beta})$ with
\[
Z_{\alpha,\beta}(E) = -H^{n-2}\Ch_2^\beta(E) + \dfrac{\alpha^2}{2}H^n\Ch_0^\beta(E) + i H^{n-1}\Ch_1^\beta(E)
\]
defines a weak stability condition on $D^b(X)$. Moreover, these weak stability conditions vary continuously with $(\alpha,\beta)$.
\end{proposition}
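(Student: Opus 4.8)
The plan is to verify directly the three axioms in the definition of a weak stability condition for the pair $\sigma_{\alpha,\beta} = (\Coh^\beta(X), Z_{\alpha,\beta})$ with $(\alpha,\beta)$ fixed, and then treat continuity separately. That $\Coh^\beta(X)$ is the heart of a bounded t-structure is immediate from Proposition \ref{tiltofheart} applied to the Mumford weak stability condition $\sigma_H$ at slope $\mu_H = \beta$, so no work is needed there. Writing the cohomology objects of $E\in\Coh^\beta(X)$ as $\mathcal{H}^{-1}(E)$ lying in the ``free'' part $\mathcal{F}^\beta$ and $\mathcal{H}^0(E)$ in the ``torsion'' part $\mathcal{T}^\beta$ of the tilt, I would record at the outset that $\Im Z_{\alpha,\beta}(E) = H^{n-1}\Ch_1^\beta(E)$ and $\Re Z_{\alpha,\beta}(E) = -H^{n-2}\Ch_2^\beta(E) + \tfrac{\alpha^2}{2}H^n\Ch_0^\beta(E)$.

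\emph{Axiom (a), the weak stability function.} Positivity of the imaginary part is the defining feature of the tilt: any $F\in\mathcal{T}^\beta$ has all Mumford HN factors of slope $>\beta$, hence $H^{n-1}\Ch_1^\beta(F)\geq 0$, while any $G\in\mathcal{F}^\beta$ has $H^{n-1}\Ch_1^\beta(G)\leq 0$; applying this to $\mathcal{H}^0(E)$ and to $\mathcal{H}^{-1}(E)[1]$ gives $\Im Z_{\alpha,\beta}(E)\geq 0$. The substantive point is the boundary case $\Im Z_{\alpha,\beta}(E)=0$: here both cohomology contributions vanish, which forces $\mathcal{H}^0(E)$ to be a sheaf supported in codimension $\geq 2$ and $\mathcal{H}^{-1}(E)$ to be a torsion-free $\mu_H$-semistable sheaf of slope exactly $\beta$. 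I would then deduce $\Re Z_{\alpha,\beta}(E)\leq 0$ by combining the classical Bogomolov--Gieseker inequality (controlling $H^{n-2}\Ch_2^\beta$ for the slope-$\beta$ part) with the Hodge index theorem (giving the correct sign for the codimension-$2$ contribution to $\Ch_2$).

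\emph{Axioms (b) and (c).} For the support property I would take the quadratic form
\[ Q(E) = \overline{\Delta}_H(E) := \bigl(H^{n-1}\Ch_1(E)\bigr)^2 - 2\bigl(H^n\Ch_0(E)\bigr)\bigl(H^{n-2}\Ch_2(E)\bigr), \]
whose crucial feature is that it is independent of $\beta$, since the twist $\Ch^\beta = \Ch\cdot e^{-\beta H}$ acts by a determinant-preserving unipotent change of coordinates on $\Lambda_H^2$. A direct computation on $\ker Z_{\alpha,\beta}\subset\Lambda_H^2\otimes\R$, where $H^{n-1}\Ch_1^\beta=0$ and $H^{n-2}\Ch_2^\beta=\tfrac{\alpha^2}{2}H^n\Ch_0^\beta$, yields $Q = -\alpha^2\bigl(H^n\Ch_0\bigr)^2$, which is negative definite for $\alpha>0$. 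That $Q(E)\geq 0$ for $\sigma_{\alpha,\beta}$-semistable $E$ is precisely the Bogomolov--Gieseker inequality for tilt stability of \cite{BMT14}. The Harder--Narasimhan property (Axiom (b)) follows from the standard finiteness argument, since $\Im Z_{\alpha,\beta}=H^{n-1}\Ch_1^\beta$ takes values in a discrete subset of $\R_{\geq 0}$ on $\Coh^\beta(X)$, which rules out infinite chains of destabilizing subobjects.

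\emph{Continuity and the main obstacle.} Continuity in $(\alpha,\beta)$ is elementary once the above are in place, as $Z_{\alpha,\beta}$ depends polynomially on the parameters and the construction of $\Coh^\beta(X)$ varies continuously. The genuine difficulty is concentrated in the two places where the Bogomolov--Gieseker inequality enters, above all in proving $\overline{\Delta}_H(E)\geq 0$ for tilt-semistable \emph{objects} rather than merely for sheaves: this cannot be reduced to the classical sheaf statement by any formal manipulation, and is the technical heart of \cite{BMT14}, which I would import as a black box rather than reprove. The boundary analysis in Axiom (a) is the other delicate point, but it becomes comparatively routine once that inequality is available.
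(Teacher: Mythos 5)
You should first be aware that the paper contains no proof of this statement: Proposition \ref{def_tilt_stability} is imported verbatim from \cite[Prop.~2.12]{BLMS17}, so your proposal can only be measured against the proof in the literature, namely \cite{BLMS17}, which in turn rests on \cite{BMT14} and the appendix of \cite{BMS16}. Your outline does reproduce the main structure of that proof: the heart comes from tilting; the boundary case of the weak stability function follows from the classical Bogomolov--Gieseker inequality together with the Hodge index theorem (one quibble: Hodge index is what you need for the slope-$\beta$ torsion-free part $\mathcal{H}^{-1}(E)$, while the codimension-two part $\mathcal{H}^{0}(E)$ only needs positivity of effective cycles against the ample class $H$); the quadratic form is the $\beta$-independent discriminant, and your computation $Q=-\alpha^2\bigl(H^n\Ch_0\bigr)^2$ on $\ker Z_{\alpha,\beta}$ is exactly the right one; and $Q\geq 0$ on semistable objects is correctly identified as the generalized Bogomolov--Gieseker-type theorem of \cite{BMT14}, to be imported rather than reproved.

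The genuine gap is in your axiom (b). The claim that $\Im Z_{\alpha,\beta}=H^{n-1}\Ch_1^{\beta}$ takes values in a discrete subset of $\R_{\geq 0}$ on $\Coh^\beta(X)$ is false whenever $\beta$ is irrational: since $H^{n-1}\Ch_1^{\beta}(E)=H^{n-1}\Ch_1(E)-\beta\, H^n\Ch_0(E)$, and both untwisted quantities range over a fixed discrete subgroup of $\Q$, the set of values is dense in $\R_{\geq 0}$ for $\beta\notin\Q$. The proposition is asserted for all $(\alpha,\beta)\in\R_{>0}\times\R$, so this case cannot be discarded. Moreover, even for rational $\beta$ the ``standard finiteness argument'' requires, besides discreteness of the imaginary part, that the heart $\Coh^\beta(X)$ be noetherian --- a nontrivial theorem (tilts of noetherian hearts need not be noetherian) which you never invoke. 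This is precisely the delicate point in the literature: \cite{BMT14} establishes the Harder--Narasimhan property for rational classes, where discreteness plus noetherianity do apply, and the extension to arbitrary real $\beta$ is carried out in the appendix of \cite{BMS16} by a genuinely different argument. As written, your proof establishes the proposition only for $\beta\in\Q$, and only modulo the unproved noetherianity of the tilted heart. A similar, smaller caveat applies to your final claim that continuity in $(\alpha,\beta)$ is elementary: since the heart itself changes as $\beta$ varies, even formulating (let alone proving) continuity requires the deformation framework of \cite{BMS16}, not just polynomial dependence of the central charge on the parameters.
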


In virtue of the proposition above, we will consider the upper half plane 
$\set{(\alpha,\beta)\in \R^2  \st \alpha >0 }$ as a space parametrizing the weak stability conditions $\sigma_{\alpha,\beta}$. We write $\mu_{\alpha,\beta}\coloneqq \mu_{Z_{\alpha,\beta}}$ for the slope function associated to $\sigma_{\alpha,\beta}$.  We can now define \textit{walls} and \textit{chambers}:

\begin{definition}\label{def_walls}
Fix a vector $v\in \Lambda^2_H$. 

A \textit{numerical wall} for $v$ is the set of pairs $(\alpha,\beta)\in \R_{>0}\times \R$ such that there exists a vector $w\in \Lambda^2_H$ for which $\mu_{\alpha,\beta}(v)=\mu_{\alpha,\beta}(w)$.

A \textit{wall} for $F\in \Coh^\beta(X)$ is a numerical wall for $v=\Ch_{\leq 2}(F)$ such that for every $(\alpha,\beta)$ in the numerical wall there exists a short exact sequence of semistable objects $0\to E\to F\to G\to 0$ satisfying $\mu_{\alpha,\beta}(E)=\mu_{\alpha,\beta}(F)=\mu_{\alpha,\beta}(G)$.

A \textit{chamber} is a connected component of the complement of the union of walls in $\R_{>0}\times\R$.
\end{definition}

The key feature of this wall and chamber decomposition is that walls with respect to a fixed $v\in \Lambda^2_H$ are locally finite in the upper half-plane. In particular, if $v=\Ch_{\leq 2}(E)$ for some $E\in \Coh^\beta(X)$, stability of $E$ is constant as $(\alpha,\beta)$ varies within a chamber (see \cite[Proposition~B.5]{BMS16}). 

We conclude recalling another construction of a weak stability condition. In the notation of Definition \ref{tiltofheart}, fix $\mu \in \R$ and define $\Coh^{\mu}_{\alpha, \beta}(X) := \Coh^{\beta}(X)^{\mu,\sigma_{\alpha,\beta}}$ endowed with a stability function
\[
Z^\mu_{\alpha,\beta} := \dfrac{1}{u} Z_{\alpha,\beta},
\]
where $u\in \C$ is the unit vector in the upper half plane such that $\mu = -\dfrac{\Re u}{\Im u}$.

\begin{proposition}[{\cite[Proposition~2.15]{BLMS17}}]\label{tiltoftilt}
The pair $\sigma^{\mu}_{\alpha,\beta}= (\Coh^{\mu}_{\alpha, \beta}(X), Z^\mu_{\alpha,\beta})$ is a weak stability condition on $D(X)$.
\end{proposition}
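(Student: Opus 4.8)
The plan is to verify the three axioms in the definition of a weak stability condition for the pair $\sigma^{\mu}_{\alpha,\beta}=(\Coh^{\mu}_{\alpha,\beta}(X),Z^\mu_{\alpha,\beta})$, organizing everything around the single observation that this pair is produced from the weak stability condition $\sigma_{\alpha,\beta}$ of Proposition~\ref{def_tilt_stability} by the tilting construction of Proposition~\ref{tiltofheart} at slope $\mu$, combined with the rotation of the central charge by the scalar $1/u$. The first axiom, that $\Coh^{\mu}_{\alpha,\beta}(X)$ is the heart of a bounded $t$-structure, requires no work: it is exactly the conclusion of Proposition~\ref{tiltofheart} applied to $\sigma_{\alpha,\beta}$ with the chosen $\mu$.

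For the weak stability function axiom I would argue by rotating in the complex plane. Set $\theta=\arg u\in(0,\pi)$, so that a $\sigma_{\alpha,\beta}$-semistable object of slope exactly $\mu$ has $Z_{\alpha,\beta}$ on the ray of argument $\theta$. By definition of the torsion pair, every semistable Harder--Narasimhan factor of a $T\in\mathcal{T}^{\mu,\sigma_{\alpha,\beta}}$ has slope $>\mu$, hence $\arg Z_{\alpha,\beta}\in(\theta,\pi]$, while those of an $F\in\mathcal{F}^{\mu,\sigma_{\alpha,\beta}}$ have slope $\le\mu$, hence $\arg Z_{\alpha,\beta}\in(0,\theta]$; after shifting, the factors of $F[1]$ have $\arg Z_{\alpha,\beta}\in(\pi,\theta+\pi]$. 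Dividing by $u$ subtracts $\theta$ from each argument, so under $Z^\mu_{\alpha,\beta}$ the factors coming from $\mathcal{T}^{\mu,\sigma_{\alpha,\beta}}$ land in arguments $(0,\pi-\theta]$ and those coming from $\mathcal{F}^{\mu,\sigma_{\alpha,\beta}}[1]$ in $(\pi-\theta,\pi]$, whose union is $(0,\pi]$. Since every object of the new heart is an iterated extension of such factors, since $Z$ is additive, and since the numbers of argument in $(0,\pi]$ together with $0$ form a set closed under addition, I conclude $\Im Z^\mu_{\alpha,\beta}\ge 0$ on $\Coh^{\mu}_{\alpha,\beta}(X)$, with a vanishing imaginary part forcing argument $\pi$, i.e. $\Re Z^\mu_{\alpha,\beta}\le 0$. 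This is precisely the weak stability function condition.

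For the last two axioms I would exploit that multiplication by the nonzero scalar $u$ leaves the kernel unchanged, $\ker Z^\mu_{\alpha,\beta}=\ker Z_{\alpha,\beta}$, and that the tilt-and-rotate operation merely reindexes the slicing: a phase argument using the two disjoint argument ranges above shows that any $\sigma^{\mu}_{\alpha,\beta}$-semistable object lies entirely in $\mathcal{T}^{\mu,\sigma_{\alpha,\beta}}$ or entirely in $\mathcal{F}^{\mu,\sigma_{\alpha,\beta}}[1]$, so it is, up to the shift $[1]$, a $\sigma_{\alpha,\beta}$-semistable object. Granting this correspondence, the support property transfers verbatim: the quadratic form $Q$ provided by Proposition~\ref{def_tilt_stability} is negative definite on the same kernel, and $Q(v(E))\ge 0$ for $\sigma^{\mu}_{\alpha,\beta}$-semistable $E$ because the class $v(E)$ equals $\pm v(E')$ for some $\sigma_{\alpha,\beta}$-semistable $E'$ and $Q$, being quadratic, is insensitive to the sign. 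The existence of Harder--Narasimhan filtrations for $Z^\mu_{\alpha,\beta}$ is obtained by transporting those for $Z_{\alpha,\beta}$ through the same correspondence of slicings.

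The step I expect to be the main obstacle is making the reindexing of the slicing fully rigorous, that is, identifying the tilt-and-rotate operation with the action of the corresponding rotation element of $\GGL$ on $\sigma_{\alpha,\beta}$ and checking that the semistable objects and their Harder--Narasimhan filtrations are genuinely preserved. The one genuinely weak-stability-specific point here is the presence of nonzero objects with $Z_{\alpha,\beta}=0$ (of infinite slope): one has to confirm that these are carried to objects of the appropriate extremal phase and do not break the correspondence. Once this bookkeeping is settled, axioms (b) and (c) are formal, and the only computational input is the half-plane argument of the second paragraph.
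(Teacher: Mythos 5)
The paper itself contains no proof of this statement: Proposition \ref{tiltoftilt} is imported verbatim from \cite[Prop.~2.15]{BLMS17}, so your attempt has to be measured against what a correct proof must contain. Your first two steps are sound: the heart axiom is exactly Proposition \ref{tiltofheart}, and the half-plane rotation argument correctly verifies that $Z^\mu_{\alpha,\beta}$ is a weak stability function on $\Coh^{\mu}_{\alpha,\beta}(X)$. The genuine gap is the claim carrying all the remaining weight, namely that every $\sigma^{\mu}_{\alpha,\beta}$-semistable object lies entirely in $\mathcal{T}^{\mu,\sigma_{\alpha,\beta}}$ or entirely in $\mathcal{F}^{\mu,\sigma_{\alpha,\beta}}[1]$, so that tilt-and-rotate is a rotation in $\GGL$ that ``merely reindexes the slicing''. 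For weak stability conditions this is false, not merely unproven, and it fails exactly at the kernel objects you flag. Since your argument uses nothing beyond the axioms, it would apply equally to slope stability $\sigma_H$ on a surface tilted at $\mu=0$, where it provably breaks: take $L=\sO_X(-D)$ with $H\cdot D>0$, so $L$ is stable of negative slope and lies in $\mathcal{F}^{0,\sigma_H}$, and take the kernel object $\C_p\in\mathcal{T}^{0,\sigma_H}$; Serre duality gives $\Ext^2(\C_p,L)\simeq\Hom(L,\C_p)^\vee\neq 0$, so there are non-split extensions $0\to L[1]\to E\to \C_p\to 0$ in the tilted heart. Running the long exact sequence of $\Coh(X)$-cohomologies over an arbitrary short exact sequence $0\to K\to E\to Q\to 0$ of that heart, one checks that every quotient $Q$ is either $\C_p$ (of infinite rotated slope) or has rotated charge $i\bigl(Z_H(L)+Z_H(K)\bigr)$ with $K\in\mathcal{T}^{0,\sigma_H}$, and a one-line computation shows such $Q$ never has slope smaller than that of $E$. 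Hence $E$ is semistable for the rotated pair; but $E$ has non-vanishing cohomology in two degrees with respect to $\Coh(X)$, so no shift of $E$ is $\sigma_H$-semistable. The same mechanism is available for $\sigma_{\alpha,\beta}$, whose kernel contains all zero-dimensional sheaves.

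Because such ``mixed'' semistable objects exist, both transports in your final paragraph collapse. Transporting HN filtrations would assign to $E$ the filtration with factors $L[1]$ and then $\C_p$, whose rotated slopes are \emph{increasing} --- not a Harder--Narasimhan filtration --- while the truth is that $E$ itself is semistable; so the HN property for $\sigma^{\mu}_{\alpha,\beta}$ cannot be obtained by reindexing and requires a direct argument. Likewise the support property cannot be reduced to ``$v(E)=\pm v(E')$ with $E'$ a $\sigma_{\alpha,\beta}$-semistable object'', because objects like $E$ are not of this form; one is rescued here only by the separate observation that kernel classes (e.g.\ that of $\C_p$) vanish in $\Lambda^2_H$, which is an additional argument you never make. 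This is precisely why the proof this paper defers to, \cite[Prop.~2.15]{BLMS17}, resting on arguments going back to \cite{BMT14,BMS16}, does not identify $\sigma^{\mu}_{\alpha,\beta}$ with a $\GGL$-translate of $\sigma_{\alpha,\beta}$: the HN property for the double-tilted heart is established directly (Noetherianity of the heart together with discreteness properties of the central charge), and the support property is checked on all semistable objects, mixed ones included. To repair your proof you would have to do the same.
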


Finally, again in \cite{BLMS17}, the authors induce a stability condition on the Kuznetsov component of $Y$ using weak stability conditions:
\begin{proposition}[{\cite[Theorem~1.1]{BLMS17}}]
\label{thm_stab_con_on_Ku}
Let $Y$ be a Fano threefold of index 2 with $\Pic(Y)=\Z$, and let $\sigma^0_{\alpha,-\frac{1}{2}}$ be the weak stability condition constructed in Proposition \ref{tiltoftilt} for $\mu=0$, $\beta=-\dfrac{1}{2}$ and $0<\alpha \ll 1$; let moreover $\sA= \Ku(Y) \cap \Coh^{0}_{\alpha, \beta}(Y)$ and $Z = Z^0_{\alpha,-\frac{1}{2}}$. Then the pair $\sigma=(\sA, Z)$ is a Bridgeland stability condition on $\Ku(Y)$.
\end{proposition}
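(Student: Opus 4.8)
The plan is to obtain $\sigma$ as the \emph{restriction} to $\Ku(Y)$ of the ambient weak stability condition $\sigma^0_{\alpha,-\frac12}$, via the general criterion of \cite{BLMS17}: if $(\sA_0,Z_0)$ is a weak stability condition on a triangulated category $\sD$ satisfying the support property, and $\sD'\subset\sD$ is a full triangulated subcategory such that $\sA_0\cap\sD'$ is the heart of a bounded t-structure on $\sD'$ and $Z_0$ does not vanish on any nonzero object of $\sA_0\cap\sD'$, then $(\sA_0\cap\sD',Z_0)$ is a Bridgeland stability condition on $\sD'$. I would apply this with $\sD=D^b(Y)$, $\sD'=\Ku(Y)$, $\sA_0=\Coh^0_{\alpha,-\frac12}(Y)$ and $Z_0=Z^0_{\alpha,-\frac12}$, which form a weak stability condition by Proposition \ref{tiltoftilt}; its support property, with respect to the discriminant quadratic form, is inherited from the Bogomolov inequality for tilt-stability and would then be transferred to $\Ku(Y)$.

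The geometric core, and the step I expect to be the main obstacle, is to control the two exceptional objects $\sO_Y$ and $\sO_Y(1)$ that cut out $\Ku(Y)$ in \eqref{eq_def_of_Ku}. Fixing $\beta=-\frac12$, I would compute the twisted Chern characters $\Ch^{\beta}$ and the central charges $Z^0_{\alpha,-\frac12}$ of $\sO_Y$, $\sO_Y(1)$ and their relevant shifts, in order to check that for $0<\alpha\ll 1$ these objects (up to shift) lie in the heart $\Coh^0_{\alpha,-\frac12}(Y)$ and are $\sigma^0_{\alpha,-\frac12}$-stable. Establishing the \emph{stability} (not merely membership in the heart) is the delicate point: one runs a large-volume analysis, letting $\alpha\to 0$ and using the local finiteness of walls (Definition \ref{def_walls}) together with the Bogomolov inequality and the numerical rigidity forced by $\Pic(Y)=\Z$ and index $2$, to exclude any destabilizing subobject. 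Once this is known, the stability of $\sO_Y,\sO_Y(1)$ makes the semiorthogonal decomposition compatible with the t-structure: every object of $\Coh^0_{\alpha,-\frac12}(Y)$ admits a canonical filtration whose factors are the exceptional objects and an object of $\sA=\Ku(Y)\cap\Coh^0_{\alpha,-\frac12}(Y)$, which is exactly what the criterion needs to conclude that $\sA$ is the heart of a bounded t-structure on $\Ku(Y)$.

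It then remains to upgrade the weak stability function $Z=Z^0_{\alpha,-\frac12}$ to a genuine stability function on $\sA$, i.e.\ to rule out nonzero $E\in\sA$ with $Z(E)=0$. Here I would argue numerically: using the generators $v,w$ of $K_{\text{num}}(\Ku(Y))$ from \eqref{eq_def_of_v_w}, a direct computation shows that $Z^0_{\alpha,-\frac12}$ restricts to an \emph{injective} map on $K_{\text{num}}(\Ku(Y))\otimes\R$ for $0<\alpha\ll1$ (for instance $Z^0_{\alpha,-\frac12}(w)$ is a nonzero real multiple of $H^3$, while $Z^0_{\alpha,-\frac12}(v)$ has nonzero imaginary part). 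Consequently the kernel of $Z$ on the numerical group of $\Ku(Y)$ is trivial, which yields that $Z$ is a genuine (not merely weak) stability function on $\sA$.

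Finally, the Harder--Narasimhan property and the support property of $(\sA,Z)$ are inherited from those of the ambient weak stability condition through the criterion, so $\sigma=(\sA,Z)$ is a Bridgeland stability condition on $\Ku(Y)$, as claimed.
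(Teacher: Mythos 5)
First, a point of calibration: the paper does not prove Proposition \ref{thm_stab_con_on_Ku} at all — it is imported verbatim as \cite[Theor.~1.1]{BLMS17} — so your attempt must be measured against the proof in that reference, whose general strategy (inducing a stability condition on the orthogonal complement of $\sO_Y,\sO_Y(1)$ from the weak stability condition $\sigma^0_{\alpha,-\frac12}$) you do identify correctly. The genuine gap is that the inducing criterion you state is not the one in \cite{BLMS17}, and in the form you give it, it does not suffice. The actual criterion ([BLMS17, Prop.~5.1]) requires, besides membership of the exceptional objects in the heart and non-vanishing of $Z$ on nonzero objects of $\sA=\Ku(Y)\cap\Coh^0_{\alpha,-\frac12}(Y)$, a third hypothesis: invariance of $\sA$ under the Serre functor of $\Ku(Y)$. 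That hypothesis is exactly what delivers the Harder--Narasimhan and support properties of the induced pair; they are \emph{not} ``inherited'' from the ambient weak stability condition, because HN factors and destabilizing subobjects of an object of $\Ku(Y)$, computed inside $\Coh^0_{\alpha,-\frac12}(Y)$, have no reason to lie in $\Ku(Y)$. Verifying this Serre-invariance — done in \cite{BLMS17} via the rotation functor $\mathbb L_{\sO_Y}(-\otimes\sO_Y(1))$ for index-2 Fano threefolds — is the hardest part of their proof and is entirely absent from your proposal. Relatedly, your claim that stability of $\sO_Y,\sO_Y(1)$ gives every object of $\Coh^0_{\alpha,-\frac12}(Y)$ a canonical filtration with factors the exceptional objects and an object of $\sA$ is not a consequence of stability alone: it is essentially a restatement of the heart property that needs proving, and in \cite{BLMS17} it too relies on the Serre-functor condition.

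The second gap is in your non-vanishing step. The computation that $Z^0_{\alpha,-\frac12}$ is injective on $K_{\text{num}}(\Ku(Y))\otimes\R$ (with $Z(w)$ a positive real multiple of $H^3$ and $Z(v)$ having nonzero imaginary part for $0<\alpha\ll1$, $d\le 5$) is correct, but it only excludes $Z(E)=0$ for objects $E$ with \emph{nonzero} numerical class. Since the ambient $Z^0_{\alpha,-\frac12}$ is only a weak stability function, one must also exclude nonzero objects of $\sA$ whose numerical class vanishes. The argument in \cite{BLMS17} is structural rather than numerical: one shows that the kernel of $Z^0_{\alpha,\beta}$ on the heart $\Coh^0_{\alpha,\beta}(Y)$ consists precisely of sheaves supported in dimension zero, and no such sheaf can lie in $\Ku(Y)$ because $\Hom(\sO_Y,\C_p)\neq 0$. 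Without this characterization your injectivity computation does not close the loop.
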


\subsection{Projection functors and base change}
\label{sec_basechange_mutations}

The following construction of \cite{Kuz11_base_change} is crucial to some parts of this paper. For any quasi-projective variety $T$, we will use $D(T)$ to denote the category of perfect complexes on $T$. Let $Y$ be a smooth projective variety. 

Suppose $E$ is an exceptional object on $Y$. Then, the category \[\pair{E}_T\coloneqq \pair{E\boxtimes G \st G\in D(T)}\subset D(Y\times T)\] gives rise to projection functors $\mathbb R_E^T$ and $\mathbb L_E^T$ to $\!^\perp\pair{E}_T$ and $\pair{E}_T^\perp$ respectively (\cite[Section~2.3]{Kuz11_base_change}). They are defined as the cones of the (co)evaluation morphisms.

Both $\pair{E}_T$ and the projection functors are \emph{$T$-linear} (this means that they are preserved under tensoring with pull-backs from $D(T)$) \cite[\S 2.3, Corollary~5.9]{Kuz11_base_change}. As a consequence, they commute with base change, in the sense of \cite[\S 5]{Kuz11_base_change}. In particular,  at every closed point $t\in T$, the projection functors $\mathbb R_E^t$ and $\mathbb L_E^t$ are the \textit{right} and \textit{left mutation} of objects on $D(Y\times \set{t})\simeq D(Y)$ across the exceptional object $E$:
\[
 E\otimes \Hom^\bullet(E,X) \xrightarrow{\mathrm{ev}} X \to \mathbb L_E^t(X) \quad \text{and}\quad
  \mathbb R_E^t(X) \to X \xrightarrow{\mathrm{coev}} E\otimes \Hom^\bullet(X,E)^\vee. 
\]

\section{Fano threefolds of Picard rank 1 and index 2}
\label{sec_Fano_threefolds}

Let $Y$ be a smooth Fano threefold of Picard rank 1 and index 2  (\textit{i.e.}, $H\coloneqq -K_Y/2$ is an ample generator of $\Pic(Y)$). Then, \cite{Isk77} shows that $Y$ belongs to one of the following 5 families, indexed by their \textit{degree} $d\coloneqq H^3\in\set{1,\ldots,5}$:
\begin{itemize}
\item $Y_5= \Gr(2,5) \cap \pr 6 \subset \pr 9$ is a linear section of codimension 3 of the Grassmannian $\Gr(2,5)$ in the Pl\"ucker embedding;
\item $Y_4 = Q \cap Q' \subset \pr 5$ is the intersection of two quadric hypersurfaces;
\item $Y_3 \subset \pr 4$ is a cubic hypersurface;
\item $Y_2 \xrightarrow{\pi} \pr 3$ is a double cover ramified over a quartic surface, or equivalently a hypersurface of degree 4 in the weighted projective space $\mathbb P(1,1,1,1,2)$;
\item $Y_1$ is a a double cover branched over a cubic of the cone over the Veronese surface in $\pr 5$, or equivalently a hypersurface of degree 6 in the weighted projective space $\mathbb P(1,1,1,2,3)$.
\end{itemize}

Kuznetsov shows in a series of papers that the derived category of $Y$ admits a semi-orthogonal decomposition $D(Y)=\langle \Ku(Y), \sO_Y, \sO_Y(1) \rangle$, whose non-trivial part 
\begin{equation}
    \label{eq_def_of_Ku}
\Ku(Y)\coloneqq \set{E\in D(Y) \st \Hom(\sO_Y,E[i])=\Hom(\sO_Y(1),E[i])=0 \mbox{ for all }i\in \Z}  
\end{equation}
we call the \textit{Kuznetsov component} of $Y$ (\cite{Kuz09_threefolds, Kuz14}). As observed in \cite[Proposition~3.9]{Kuz09_threefolds}, the numerical Grothendieck group $K_{\text{num}}(\Ku(Y))$ is identified with the image of the Chern character map $\Ch: K(\Ku(Y))\to H^\ast(Y,\Q)$, so we may alternatively use numerical classes and Chern characters. The lattice $K_{\text{num}}(\Ku(Y))$ has rank 2 and is spanned by the classes 
\begin{equation}
    \label{eq_def_of_v_w}
v\coloneqq \left[ 1-\dfrac{1}{d}H^2\right] \qquad \mbox{and} \qquad w\coloneqq \left[ H -\dfrac{1}{2}H^2+\left(\dfrac{1}{6}-\dfrac{1}{d}\right)H^3  \right]
\end{equation}
with a bilinear form given by the Euler pairing
\[
\begin{bmatrix}
-1 & -1 \\
1-d & -d
\end{bmatrix}.
\]

Before moving on to the study of $\sM_\sigma(w)$, we fix notation and recall some features of $Y$ and $\Ku(Y)$ on a case-by-case basis. Here, we also make our genericity assumptions explicit: assuming that $Y$ is smooth suffices to prove our results if $d=3,4,5$, but we need (mild) additional assumptions for lower degrees.

\subsubsection*{Degree 5}
The Fano threefold of Picard rank 1 and degree 5 is often denoted $V_5$ and called the \textit{quintic del Pezzo threefold} (it is unique up to isomorphism \cite{Isk77}). It arises as the intersection of a Grassmannian $\Gr(2,5)\subset \pr 9$ with three hyperplanes of $\pr 9$ \cite[Proposition~10]{Muk92}. The universal sub-bundle and quotient on $\Gr(2,5)$ restrict to exceptional vector bundles $S,Q$ on $V_5$, moreover, it is shown in \cite{Or91} that $V_5$ has a full strong exceptional collection
$D^b(V_5)=\pair{S,Q^\vee,\sO,\sO(1)}$,
which in turn gives rise to a semi-orthogonal decomposition 
\[ \Ku(V_5)\simeq \pair{S,Q^\vee}. \]
The space $\Hom(S,Q^\vee)$ is three-dimensional. Therefore, the category $\Ku(V_5)$ is equivalent to the derived category of representations of the Kronecker quiver with three arrows. 

\subsubsection*{Degree 4}
$Y_4$ is the complete intersection of two quadric hypersurfaces in $\pr 5$. The corresponding pencil of quadrics degenerates at 6 points, and thus determines a genus 2 curve $C$. 
As it turns out (see \cite[Theorem~2.9]{BO95}), $C$ is a fine moduli space for spinor bundles on $Y$, 
and the universal family induces a fully faithful Fourier-Mukai functor $\phi\colon D^b(C) \to D^b(Y_4)$,
giving rise to an equivalence $D^b(C)\simeq \Ku(Y_4)$.



\subsubsection*{Degree 3} The case of a cubic threefold $Y_3$ is treated in \cite{Bayer}. We give a brief account of the results in Section \ref{ssec_deg3}.

\subsubsection*{Degree 2}
We assume that $Y_2 \xrightarrow{\pi} \P^3$ is a general double cover ramified over a quartic K3 surface $R$ (which will be often identified with the branching locus). In this case, $\sO_{Y_2}(1)$ is the pullback of $\sO_{\pr 3}(1)$. The generality assumption here is that of \cite{Wel81,TM03}. Precisely, we assume $R$ to be smooth and to not contain lines \cite[\S 1]{Wel81}, consequently, $Y_2$ and its Fano surface of lines are smooth \cite[Remark 2.2.9]{KPS18_hilbert_schemes}. 


Finally, $Y_2$ is equipped with an involution $\tau$ that swaps the two sheets of the cover, acting on objects in the derived category via pull-back.
Observe that $\tau$ preserves $\Ku(Y_@)$, since $\sO_{Y_2}(1)$ is pulled-back from $\pr 3$. Moreover, we can follow \cite{Kuz15_fractional} and compute the Serre functor for $\Ku(Y_2)$ to be $S_{\Ku(Y_2)}(E) = \tau E[2]$.

\subsubsection*{Degree 1} 

Finally, $Y_1$ is a sextic hypersurface in the weighted projective space $\mathbb P\coloneqq \pr{}(1,1,1,2,3)$ and $\Pic(Y_1)$ is generated by $H\coloneqq  \sO_{\mathbb P}(1)_{|Y_1}$. $H$ has three linearly independent sections and a unique base point $y_0$, hence it induces a rational map $\phi_H\colon Y_1 \dashrightarrow \mathbb{P}(H^0(\sO_{Y_1}(1)))\simeq \pr 2$. On the other hand, $2H\sim -K_{Y_1}$ is base point free, and induces a morphism $\phi_{2H}\colon Y_1 \to \mathbb{P}(H^0(\sO_{Y_1}(2)))\simeq\pr 6$, whose image $K\simeq \mathbb P(1,1,1,2)$ is the cone over a Veronese surface with vertex $k\coloneqq \phi(y_0)$. The morphism $\phi_{2H}$ is smooth of degree 2 outside $k$ and a divisor $D\in \abs{\sO_K(3)}$. For this reason, $Y_1$ is often referred to as to a \textit{Veronese double cone}. We will denote by $\iota$ the involution on $Y_1$ corresponding to the double cover $\phi_{2H}$.

There is a commutative diagram
\begin{equation}
\label{eq_diagram_rational_maps}
\begin{tikzcd}
	{Y_1} & {K} \\
	{\pr 2}
	\arrow["{\phi_H}"', from=1-1, to=2-1, dashed]
	\arrow["{\phi_{2H}}", from=1-1, to=1-2]
	\arrow["{\pi}", from=1-2, to=2-1, dashed]
\end{tikzcd}
\end{equation}
where $\pi$ is the projection from $k$.

Let $\sigma_K$ be the blowup $\sigma_K\colon \widetilde{K}\to K$ of the vertex $k$ with exceptional divisor $E$. Then, the pull-back $\widetilde{Y}\coloneqq Y_1\times_K \widetilde{K}$ resolves the indeterminacy of diagram \eqref{eq_diagram_rational_maps}:
\begin{equation*}
\begin{tikzcd}
	\widetilde{Y} & \widetilde{K} & \\
	{Y_1} & {K} & {\pr 2}
	\ar[swap, "{\widetilde{\tau}}"', from=1-1, to=1-2]
	\arrow[swap, "{\phi_{2H}}", from=2-1, to=2-2]
	\arrow[swap, "{\sigma_Y}", from=1-1, to=2-1]
	\arrow[swap, "{\sigma_K}", from=1-2, to=2-2]
	\arrow["{\widetilde{\pi}}", from=1-2, to=2-3]
	\arrow[swap, "{\pi}", from=2-2, to=2-3, dashed]
\end{tikzcd}
\end{equation*}
where $\widetilde{\tau}\colon \widetilde{Y}\to \widetilde{K}$ is a degree 2 cover ramified over the divisor $E\cup \sigma_K^{-1}(D)$.

The map $\pi$ restricted to $D$ is a 3-to-1 cover of $\pr 2$, and it ramifies at a curve $C$. Throughout this section, we assume that $Y_1$ is smooth and that $C$ is irreducible and general in moduli (this is the generality assumption used in \cite{Tih82}).

A \emph{line} in $Y_1$ is defined as a smooth, purely one-dimensional subscheme $L \subset Y_1$ with Hilbert polynomial $p_H(t)=t+1$. Under our generality assumption, the scheme $F(Y_1)$ parametrizing lines in $Y_1$ is a smooth projective surface with a copy of $C$ embedded at the boundary of the locus of lines (\cite[Thm 4]{Tih82}). Indeed, $C\subset F(Y_1)$ parametrizes \emph{singular} lines (\cite[\S 3, \S 8]{Tih82}), \textit{i.e.} rational curves with a single node at a point $p \in C$, with non-reduced scheme structure at the point.


\subsection{Hyperplane sections of $Y$}\label{sec_hyperplane sections+generality}



Let $Y$ be a smooth Fano threefold of rank 1, index 2 and any degree. A smooth hyperplane section of $Y$ is a del Pezzo surface. We recall some of the related notions and some aspects of degenerations of del Pezzo surfaces. 

\begin{definition}\label{def_of_root_of_S}
Let $S$ be a hyperplane section of $Y$, \textit{i.e.} $S\in \abs{\sO_Y(1)}$. A \textit{root} of $S$ is a (possibly Weil) divisor $D$ such that $D\cdot K_S=0$ and $D^2=-2$ (see \cite[\S 8.2.3]{Dol12}).

By a root of $Y$ we mean a sheaf of the form  $\iota_*(\sO_S(D))$, where $S\in \abs{\sO_Y(1)}$, $D$ is a root of $S$, and $\iota\colon S\to Y$ is the inclusion map.
\end{definition}

Every smooth del Pezzo surface of degree $d$ arises as the blow-up of $\pr 2$ along a set $\Sigma$ containing $9-d$ points in general position \cite[Proposition~8.1.25]{Dol12}. 


We now describe hyperplane sections of $Y$:

\begin{lemma}\label{lem_hyperplane_sections}
Let $Y$ be a rank 1 index 2 Fano threefold, and let $S\in \abs{\sO_Y(1)}$ be a hyperplane section of $Y$. Suppose moreover that, if $d\leq 2$, $Y$ is general in the above sense. Then:
\begin{enumerate}
    \item $S$ is an integral, normal, Gorenstein surface with anti-canonical bundle $-K_S\simeq \sO_Y(1)_{|S}$;
    \item \label{itm2:lem_hyperplane_sections} The general $S$ is a smooth del Pezzo surface of degree $d$. If $S$ is singular, then it has ordinary double points or it is a cone over an elliptic curve. These degenerations are realized by specializing $9-d$ points on $\pr 2$ to an almost general position (in which case they become rational double points) or by degenerating a del Pezzo surface to a cone over a section of $\abs{-K_S}$;
    \item The set of roots of $S$ is finite in $\mathrm{Cl}(S)$. In any of the cases above, $S$ contains no effective roots.
\end{enumerate}
\end{lemma}

\begin{proof}
Since $Y$ has Picard rank 1, all $S$ are irreducible and reduced. Then, normality is a consequence of Zak's theorem on tangencies (\cite[Corollary~3.4.19]{positivityI}) and our generality assumptions.

All surfaces $S$ are Gorenstein, their anti-canonical bundle being $\sO_Y(1)_{|S}$ by adjunction (\cite[\S 4.1]{Kol13} applies to this case and the different vanishes \cite[Proposition~4.5(1)]{Kol13}). 

Normal Gorenstein surfaces with ample anti-canonical bundle are classified in \cite{hidaka1981}, which proves \eqref{itm2:lem_hyperplane_sections}. In particular, rational singularities are obtained by specializing $9-d$ points and contracting all $(-2)$ curves (\cite[\S 3]{hidaka1981}). 

Finally, roots are finite for a smooth del Pezzo surface $S$ of degree $d\leq 5$ \cite[\S 8.2.3]{Dol12}, and therefore also for surfaces obtained from $S$ by contracting $(-2)$ curves. If, instead, $S$ is a cone over an elliptic curve, then its divisor group contains no roots \cite[Proposition~3.14]{Kol13}. Moreover, for a root $D$ we have $D\cdot K_S=-D\cdot H_{|S}=0$ and $H_{|S}$ is ample, so $D$ cannot be effective. 
\end{proof}

\subsubsection*{Hyperplane sections of $Y_2$}
Section \ref{sec_AJY2} will require more detail on $Y_2$, we recollect here some well-known results.

A hyperplane section $\iota \colon S \hookrightarrow Y_2$ is given by the pullback $\pi^*(P)$ of a plane in $\P^3$. It is a double cover of $\pr 2$ branched over $R\cap P$, singular if and only if $P$ is tangent to $R$. Singularities are rational by Lemma \ref{lem_hyperplane_sections} (the elliptic type can only appear if $R\cap P$ is four lines meeting at a point \cite[Proposition~4.6]{hidaka1981}). 

Suppose $S$ is a smooth del Pezzo surface of degree 2: its Picard group is free of rank 8, generated by the hyperplane class $\mathbf{e}_0$ of $\pr 2$ and the exceptional curves $\mathbf{e}_i$, $i=1,\ldots,7$. The line bundle $\sO_{Y_2}(H)$ restricts on $S$ to 
\[ H_{|S}=-K_S =  3\mathbf{e}_0 - \mathbf{e_1}-\cdots-\mathbf{e_7}.\]
and the orthogonal lattice $(-K_S)^\perp$, equipped with the intersection product, is a root lattice $E_7$. The following lemma is well known.

\begin{lemma}\label{lem_hyp_sec_d=2}
Let $S$ be a hyperplane section of $Y$.
Suppose $S$ is a del Pezzo surface. Define vectors in $(K_S)^\perp$:
\begin{align*}
   \alpha_i\coloneqq 2\mathbf{e}_0-\mathbf{e}_1-\cdots-\mathbf{e}_7+\mathbf{e}_i &\quad 1\leq i \leq 7\\
   \alpha_{ij}\coloneqq \mathbf{e}_i-\mathbf{e}_j, &\quad 1\leq i<j \leq 7\\
   \alpha_{ijk} \coloneqq \mathbf{e}_0-\mathbf{e}_i-\mathbf{e}_j-\mathbf{e}_k, &\quad 1\leq i<j<k \leq 7
\end{align*}
\begin{itemize}
    \item $S$ admits 126 roots of the form $\alpha_i,\alpha_{ij},\alpha_{ijk}$. 
    \item $S$ contains 56 lines. These come in pairs $(L,-K_S-L)$ $($\textit{i.e.}, there is a unique divisor in $\abs{-K_S-L}$ and it is a line$)$.
    \item every root can be expressed $($non-uniquely$)$ as the difference of two disjoint lines. Conversely, the difference of two disjoint lines is a root in $\Pic(S)$.
\end{itemize}
If $S$ has rational singularities,
 every singular point of $S$ lies on a line. 
\end{lemma}
\begin{proof}
The roots of $S$ are computed in \cite[\S 8.2.3]{Dol12}, the lines in \cite[\S 6.1.1]{Dol12}. 

If $S$ is singular, then its minimal resolution $\tilde{S}\to S$ is obtained by the blowup of $\pr 2$ at 7 points in almost general position, and the morphism contracts all effective roots on $\tilde{S}$ \cite[Theorem 3.4]{hidaka1981}. It is straightforward to check that one can always find a line through a singular point of $S$: equivalently, every effective root of $\tilde{S}$ intersects a line.
\end{proof}

\begin{lemma}\label{lem_KodairaVan}
Let $S$ be a smooth del Pezzo surface of degree 2, and let $D$ be a root of $S$. Then, we have $H^1(D-K_S)=H^2(D-K_S)=0$.  
\end{lemma}

\begin{proof}
We apply the Kodaira vanishing theorem to the divisor $D-2K_S$, after showing that it is ample on $S$. 
The nef cone of $S$ is described in \cite[Theorem 2.4]{BC13}: one checks that if $D$ is any of the 126 roots listed in Lemma~\ref{lem_hyp_sec_d=2} then $D-2K_S$ belongs to the interior of the nef cone, and is therefore ample.
\end{proof}

\section{Construction of the moduli spaces}\label{sec_construction_of_moduli}

In this section, we fix a general Fano threefold $Y$ of Picard rank 1, index 2, and degree $d$, and describe three moduli spaces of objects of class $w$ (see \eqref{eq_def_of_v_w} for the definition of $w$) in $D^b(Y)$. Our main interest lies in the moduli space of $\sigma$-stable objects in $\Ku(Y)$, denoted $\sM_\sigma(w)$, where $\sigma$ is one of the stability conditions of Proposition \ref{thm_stab_con_on_Ku}. We will work with two intermediate moduli spaces. 

Our starting point is the moduli space $\sM_H(w)$ of sheaves of class $w$ which are stable in the sense of Gieseker with respect to the polarization $H$ (see \cite[Chapter 4]{HL10} for the definition and additional details). The space $\sM_H(w)$ is a projective scheme of finite type. We will use the fact that its Zariski tangent space at a point $[F]$ is canonically isomorphic to 
\begin{equation}\label{eq_Tangent_M_G}
    T_{[F]}\sM_H(w)\simeq \Ext^1(F,F), 
\end{equation}
and that it admits a universal family (this follows from \cite[Theorem 4.6.5]{HL10}).

Let $\sigma_{\alpha,\beta}$ be one of the weak stability conditions of Proposition \ref{def_tilt_stability}. One can define the moduli functor parametrizing $\sigma_{\alpha,\beta}$-semistable objects of fixed class $w$: this functor is corepresented by an algebraic stack of finite type \cite[Proposition~3.7]{Toda13_BG_counting}, denoted $\sM_{\alpha,\beta}(w)$, which is proper if every $\sigma_{\alpha,\beta}$-semistable objects is $\sigma_{\alpha,\beta}$-stable (quasi-properness is \cite[Theorem~1.2]{TP15}, and properness is a consequence of \cite{AP06}, with a standard argument as in \cite[Lemma 6.6]{BM14}). 

First, we describe $\sM_H(w)$ (Proposition~\ref{gieseker}) and identify it with the space $\sM_{\alpha,-\frac 12}(w)$ for $\alpha \gg 0$ (Proposition~\ref{prop_tilt}). We then show that, after crossing a wall, $\sM_{\alpha,-\frac 12}(w)$ coincides with $\sM_\sigma(w)$ (Proposition~\ref{kuzistilt}).




\subsection{The Gieseker moduli space $\sM_H(w)$}\label{sec_Gieseker_moduli}

\begin{proposition}\label{gieseker}
Let $Y$ be a Fano threefold of Picard rank 1, index 2, and degree $d$. 
The moduli $\sM_H(w)$ of Gieseker-semistable objects of class $w$ on $Y$ has two irreducible components. One, denoted $\mathcal{P}$, has dimension $d+3$ and parametrizes ideal sheaves  $I_{p|S}$ of a point in a hyperplane section $S$ of $Y$. A point $[I_{p|S}]\in \mathcal P$ is smooth in $\sM_H(w)$ iff $p$ is a smooth point of $S$.

The second component has dimension $d+1$, and its smooth points parametrize roots of $Y$ (see Definition~\ref{def_of_root_of_S}).

The components intersect in the locus parametrizing $I_{p|S}$ with $S$ singular at $p$.
\end{proposition}

\begin{proof}
Since the class $w$ is torsion, a Gieseker-stable sheaf $E$ of class $w$ must be pure. This implies that $E=\iota_*(F)$ for some sheaf $F$ supported on a hyperplane section $\iota \colon S \hookrightarrow Y$, otherwise the kernel of the map $E\to \iota_*\iota^*E$ would give a destabilizing subsheaf of smaller dimension. Stability of $E$ implies that $F$ is a torsion-free rank-one stable sheaf on $S$. 

Now we claim that $F$ must have the form $F=I_Z \otimes \sO_S(D)$, with $Z$ a scheme of finite length $z \geq 0$ and $\sO_S(D)$ a reflexive sheaf of rank 1 associated to a Weil divisor $D$ on $S$. Indeed, by Lemma \ref{lem_hyperplane_sections}, $S$ is integral and normal. Since $S$ is integral, the map $F\to F^{\vee\vee}$ is injective \cite[Theorem~2.8]{Schwede2010}. Since $S$ is normal, $F^\vee$ is reflexive and therefore $F^\vee\simeq \sO_{S}(-D)$ for some Weil divisor $D$ \cite[Section~3]{Schwede2010}. Then, $F\otimes\sO(-D)\subset F^{\vee\vee}\otimes \sO(-D)\simeq \sO_S$ and $F\otimes\sO(-D)$ must be an ideal sheaf of a subscheme supported in codimension 2. In particular, we have $\Ch(F)=\left([S],D,\Ch_2(\sO_S(D))-z[\mathrm{pt}]\right)$.

Next, we apply the Grothendieck-Riemann-Roch theorem to compute $\Ch(F)\in H^*(S)$. The computation can be done assuming that $S$ is smooth: a Riemann-Roch theorem continues to hold for local complete intersections 
\cite[\S 4]{Suw03}, and the Chern class of $N_{S/Y}\simeq \sO_Y(H)_{|H}$ does not depend on the choice of $S$ in its linear series. Then we have 
\[ \Ch(w)=\iota_*(\Ch(F) \cdot \td(N_{S/Y})) \]
which implies that $\Ch(F)=([S],D,-[\mathrm{pt}])\in H^*(S)$, where $D\in H^2(S)$ satisfies $D\cdot (H_{|S})=0$. Then, there are two possibilities: $z=1$ and $\Ch_2(F)=0$, or $z=0$ and $\Ch_2(F)=-1$. 

To proceed, we need to be more precise about the second Chern class for reflexive rank-one sheaves. The Riemann-Roch formula on normal surfaces presents a correction term $\delta_S(D)\in \Q$, only depending on $D$ and the singularities of $S$:
\[
\chi(\sO_S(D))= \chi(\sO_S) + \dfrac{1}{2}(D^2-K_S\cdot D) + \delta_S(D).
\]
For $S$ as in Lemma \ref{lem_hyperplane_sections}, $\delta_S(D)\leq 0$ (see \cite[Theorem~9.1]{Rei87} for rational singularities, and \cite[\S 7.7]{Lan00} for the elliptic ones).
Consistently with the Riemann-Roch formulae, we define the second Chern character 
\[
\Ch_2(\sO_S(D))=\dfrac{D^2}{2}+\delta_S(D).
\]
Let $p\colon \tilde{S}\to S$ denote the minimal resolution of $S$. If $z=1$, then $0=D^2=(p^*(D))^2$. Then, by the Hodge index theorem \cite[Corollary~2.16]{BHPV04},  $p^*D=0$ and therefore $D=0$ in $H^2(S)$. 
If $z=0$, then $D$ is a root unless $(p^*D)^2=\frac{D^2}{2}=-1$ and $\delta_S(D)=-\frac 12$. If $S$ is a cone, this cannot happen since $\delta_S(D)\leq -1$ \cite[\S 7.7]{Lan00}. If $S$ has ADE singularities, then $\tilde S$ is a crepant resolution, hence $K_{\tilde S}=p^*K_S=p^*(-H_{|S})$ and therefore  $(p^*D)\cdot K_{\tilde S}=D\cdot (H_{|S})=0$. But then, the Riemann Roch theorem on $\tilde S$ implies that $(p^*D)^2$ is even, a contradiction. 


For the claims about smoothness, observe the following: the locus parametrizing sheaves of the form $I_{p|S}$ has dimension $d+3=\dim \mathbb P (H^0(\sO_{Y}(1))+2$, which coincides with the rank of the Zariski tangent space of $\sM_H(w)$ at $[I_{p|S}]$ iff $p$ is a smooth point of $S$ by Lemma \ref{d+3} and the isomorphism \eqref{eq_Tangent_M_G}.

On the other hand, the locus of sheaves $\iota_*\sO_S(D)$ has dimension $d+1=\dim \mathbb P (H^0(\sO_{Y}(1))$, since $S$ has finitely many roots by Lemma \ref{lem_hyperplane_sections}. Then, each of these points is smooth in $\sM_H(w)$ by Lemma \ref{dimensiond+1} and the isomorphism \eqref{eq_Tangent_M_G}.


Now we claim that the two components intersect exactly at those points parametrizing sheaves $I_{p|S_0}$, with $S_0$ a hyperplane section of $Y$ singular at $p$. Indeed, if $S_0$ has rational singularities, then one can exhibit a flat family of sheaves of the form $\iota_*\sO_S(D)$ degenerating to $I_{p|S}$: we do this for one explicit degeneration in Example \ref{explicitfamily}, the other degenerations are completely analogous. If $S_0$ is a cone, by the properness of $\sM_H(w)$ any family of $\iota_*\sO_S(D)$ supported on a smoothing of $S_0$ has limit $I_{p|S_0}$, since $\mathrm{Cl}(S_0)$ contains no roots (Lemma \ref{lem_hyperplane_sections}). 
\end{proof}

\begin{lemma}\label{d+3}
Let $S\subset Y$ be a hyperplane section, let $p$ a smooth  point of $S$. Then we have
$$\dim \Ext^i(I_{p|S},I_{p|S})=\begin{cases}
0 & \text{ if }i=3\\
2 & \text{ if }i=2\\
d+3 & \text{ if }i=1\\
1  & \text{ if }i=0.\end{cases}
$$  
\end{lemma}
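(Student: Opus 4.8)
The plan is to compute three of the four groups directly and recover the last one from the numerical constraint. Writing $\iota\colon S\hookrightarrow Y$ for the inclusion and $I_{p|S}=\iota_*\mathcal{I}$, where $\mathcal{I}$ is the ideal sheaf of $p$ on $S$, Riemann--Roch on $Y$ forces $\sum_i(-1)^i\dim\Ext^i(I_{p|S},I_{p|S})=\chi(w,w)=-d$, the value recorded by the Euler pairing in the introduction. Hence it suffices to establish $\dim\Ext^0=1$, $\dim\Ext^2=2$ and $\Ext^3=0$, after which $\dim\Ext^1=1+2-0+d=d+3$.

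The two outer groups are immediate. Since $S$ is integral (Lemma~\ref{lem_hyperplane_sections}), the sheaf $\mathcal{I}$ is torsion-free of rank one and stable, hence simple, so $\Ext^0(I_{p|S},I_{p|S})=\Hom_S(\mathcal{I},\mathcal{I})=\C$. For $\Ext^3$ I would use Serre duality on $Y$, where $\omega_Y=\sO_Y(-2)$: one has $\Ext^3(I_{p|S},I_{p|S})\cong\Hom(I_{p|S},I_{p|S}(-2))^\vee$, and since $S$ is normal the reflexive hull gives $\mathcal{H}om_S(\mathcal{I},\mathcal{I}(-2))\subseteq\mathcal{H}om_S(\mathcal{I},\sO_S(-2))=\sO_S(-2)$; as $\sO_S(1)=-K_S$ is ample, $H^0(\sO_S(-2))=0$ and the group vanishes.

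The heart of the argument is $\dim\Ext^2=2$. Serre duality on $Y$ gives $\Ext^2(I_{p|S},I_{p|S})\cong\Ext^1(I_{p|S},I_{p|S}(-2))^\vee$, and I would transport the right-hand side to $S$ using the standard triangle for the derived restriction along the Cartier divisor $\iota$: since $N_{S/Y}\cong\sO_S(1)$, the functor $\iota^!\iota_*$ sits in a triangle $\mathcal{G}\to\iota^!\iota_*\mathcal{G}\to\mathcal{G}(1)[-1]$, and applying $R\Hom_S(\mathcal{I},-)$ with $\mathcal{G}=\mathcal{I}(-2)$, together with $\Hom_S(\mathcal{I},\mathcal{I}(-1))=0$, yields $\Ext^1(I_{p|S},I_{p|S}(-2))\cong\Ext^1_S(\mathcal{I},\mathcal{I}(-2))$. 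This last group I would read off from the structure sequence $0\to\mathcal{I}(-2)\to\sO_S(-2)\to\sO_p\to0$: Serre duality on the Gorenstein surface $S$ identifies $\Ext^i_S(\mathcal{I},\sO_S(-2))\cong H^{2-i}(\mathcal{I}(1))^\vee$, which vanishes for $i=0,1$ once $H^{\geq1}(S,\mathcal{I}(1))=0$, so the connecting map $\Hom_S(\mathcal{I},\sO_p)\to\Ext^1_S(\mathcal{I},\mathcal{I}(-2))$ is an isomorphism. Finally, because $p$ is a \emph{smooth} point of $S$, the Koszul resolution of $\mathcal{I}=\mathfrak{m}_p$ computes $\Hom_S(\mathcal{I},\sO_p)=(\mathfrak{m}_p/\mathfrak{m}_p^2)^\vee=T_pS\cong\C^2$, which gives $\dim\Ext^2=2$.

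This is also the only place where the hypothesis on $p$ is used: $\dim\Hom_S(\mathcal{I},\sO_p)=\dim T_pS$ equals $2$ precisely when $p$ is smooth, which is exactly what makes $\mathcal{P}$ smooth at $[I_{p|S}]$ in Proposition~\ref{gieseker}. I expect the main obstacle to lie in the $\Ext^2$ computation rather than in the formal reductions: one must justify the base-change triangle and Serre duality uniformly over the singular Gorenstein surfaces allowed by Lemma~\ref{lem_hyperplane_sections} (rational double points and the elliptic cone), and one must secure the vanishing $H^{\geq1}(S,\mathcal{I}(1))=0$. The latter is clear when $\sO_S(1)$ separates $p$, but it is delicate exactly when $\mathcal{I}(1)=\mathcal{I}_p\otimes(-K_S)$ acquires a stray $H^1$ --- most notably in degree $d=1$, where $|-K_S|$ has a base point --- and there one must argue separately that the relevant connecting map stays injective, so that $\dim\Ext^2=2$ persists.
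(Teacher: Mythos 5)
Your strategy is genuinely different from the paper's, and most of it is sound, but as written it does not prove the lemma in every case the statement covers. The paper never computes the four groups separately: it writes $I_{p|S}\simeq[\sO_S\to\C_p]$ in $D^b(Y)$ and runs the spectral sequence of this two-term complex, so all four $\Ext$ groups fall out of a single page-two computation. Crucially, the only differentials whose ranks matter there are supported at $p$ (their kernels are identified with $\Hom(I_{p|S},\C_p)\simeq T_pS$), so smoothness of $p$ enters exactly once and no global vanishing on $S$ beyond $\Ext^\bullet_Y(\sO_S,\sO_S)$ is ever invoked. Your route --- $\chi(w,w)=-d$ from the Euler pairing, simplicity for $\Ext^0$, Serre duality with $\omega_Y=\sO_Y(-2)$ for $\Ext^3$, and the reduction of $\Ext^2$ through the triangle $\mathcal{G}\to\iota^!\iota_*\mathcal{G}\to\mathcal{G}(1)[-1]$ and the structure sequence of $p$ --- is correct step by step: the identification $\Ext^1_Y(I_{p|S},I_{p|S}(-2))\cong\Ext^1_S(\mathcal{I},\mathcal{I}(-2))$ and the duality $\Ext^i_S(\mathcal{I},\sO_S(-2))\cong H^{2-i}(\mathcal{I}(1))^\vee$ hold in the required Gorenstein generality, and your isolation of where smoothness of $p$ enters ($\dim\Hom_S(\mathcal{I},\sO_p)=\dim T_pS$) matches the paper's.

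The gap is the one you flagged yourself, and it is genuine rather than cosmetic: the vanishing $H^1(S,\mathcal{I}(1))=0$ needs both $H^1(\sO_S(1))=0$ and surjectivity of the evaluation $H^0(\sO_S(1))\to\C_p$, and the latter fails exactly when $p$ is the base point of $\abs{-K_S}$. For $d=1$ this is not a fringe configuration: every hyperplane section of $Y_1$ passes through the base point $y_0$ of $\abs{H}$, and since all sections of $\sO_S(1)$ are restrictions from $Y$, the point $y_0$ is the base point of $\abs{-K_S}$ on \emph{every} $S$. There $h^1(\mathcal{I}(1))=1$, and your long exact sequence only gives $\dim\Ext^2(I_{p|S},I_{p|S})\in\set{2,3}$ unless one proves the map $\Ext^1_S(\mathcal{I},\sO_S(-2))\to\Ext^1_S(\mathcal{I},\sO_p)$ injective --- precisely the step you defer. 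Since the lemma feeds into Proposition \ref{gieseker} and Theorem \ref{thm_moduliY1} for $d=1$, the deferral leaves the statement unproven in a case that is actually used. (A secondary point of the same kind: $H^1(\sO_S(1))=0$ should be justified for singular $S$, via Kawamata--Viehweg on the minimal resolution in the Du Val case and by a separate computation for the elliptic cone.) This comparison shows what the paper's local spectral-sequence argument buys: it is uniform in $d$ and completely insensitive to base points of $\abs{-K_S}$, whereas your decomposition trades that uniformity for conceptually cleaner individual computations and must pay for it at $p=y_0$.
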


\begin{proof}
Since $I_{p/S}\simeq[\sO_S\rightarrow \C_p]$ in $D^b(Y)$, then 
$R\Hom( I_{p/S},I_{p/S})$ is the totalization of a double complex $K^{\bullet,\bullet}$ isomorphic to 
$$K^\bullet\coloneqq \left[R\Hom(\C_p,\sO_S)\rightarrow R\Hom(\sO_S,\sO_S)\oplus R\Hom(\C_p,\C_p)\rightarrow R\Hom(\sO_S,\C_p)\right],$$ so we may compute $\Ext^i( I_{p/S},I_{p/S})$ using a spectral sequence
$$E_1^{p,q}=H^q(K^{\bullet,p})\Rightarrow H^{p+q}(K^\bullet).$$
Then the first page $E^{p,q}_1$ of the spectral sequence is

\begin{center}
\begin{tabular}{c|cc}
$\vdots$ & $\vdots$ & $\vdots$\\
 $\Ext^1(\C_p,\sO_S)$ & $\Ext^1(\sO_S,\sO_S)\oplus \Ext^1(\C_p,\C_p)$ &$\Ext^1(\sO_S,\C_p)$\\
$\Hom(\C_p,\sO_S)$ & $\Hom(\sO_S,\sO_S)\oplus \Hom(\C_p,\C_p)$ &$\Hom(\sO_S,\C_p)$\\
\hline
($p=-1$)&($p=0$)&($p=1$)
\end{tabular} 
\end{center}
with arrows pointing to the right. The dimensions of the vector spaces above are given in the table below.
\begin{center}
\begin{tabular}{c|cc}
1 & 1 &0\\
1 & 3 &0\\
0 & $d+4$ &1\\
0 & 2 &1\\
\hline
\end{tabular}
\end{center}
The map in the top row is an isomorphism $\Ext^3(\C_p,\sO_S)\rightarrow \Ext^3(\C_p,\C_p)$. The map in the bottom row is  surjective because $\Hom(\sO_S,\sO_S)\rightarrow \Hom(\sO_S,\C_p)$ is.

Consider the maps in the third and second rows. We claim that if $p$ is smooth in $S$, then these two are both non-zero, and if $p$ is singular, then they are both $0$. 
For example, the map in the third row is $\Ext^1(\C_p,\C_p)\rightarrow \Ext^1(\sO_S,\C_p)$. By applying the functor $\Hom(-,\C_p)$ to the sequence $I_{p|S} \to \sO_S \to\C_p$, one sees that its kernel is $\Hom(I_{p|S},\C_p)$. On the other hand we have 
\[ \Hom(I_{p|S},\C_p)\simeq \Hom_S(I_{p|S},\C_p)\simeq \Ext^1_S(\C_p,\C_p)\simeq T_p S,\]
and the latter has rank 2, resp. 3, if $p$ is smooth, resp. singular, in $S$.
The map in the second row is $\Ext^2(\C_p,\sO_S)\to \Ext^2(\C_p,\C_p)$, and the argument for the claim is similar.

This shows that the second page of the spectral sequence is supported on the central column of the table, and hence $E_2^{p,q}$ abuts to the claimed dimensions.
\end{proof}

\begin{lemma}\label{d+4}
If $p\in S$ is singular, then 
\[
\dim \Ext^i(I_{p|S},I_{p|S})=\begin{cases}
0 & \text{ if }i=3\\
3 & \text{ if }i=2\\
d+4 & \text{ if }i=1\\
1  & \text{ if }i=0.\end{cases}
\]
\end{lemma}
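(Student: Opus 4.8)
The plan is to rerun, essentially verbatim, the spectral sequence from the proof of Lemma \ref{d+3}. The first point to record is that its $E_1$ page is completely insensitive to whether $p$ is a smooth or singular point of $S$: every entry is one of the groups $\Ext^\bullet(\C_p,\sO_S)$, $\Ext^\bullet(\sO_S,\sO_S)$, $\Ext^\bullet(\C_p,\C_p)$, $\Ext^\bullet(\sO_S,\C_p)$, and each is computed either on the \emph{smooth} threefold $Y$ (so $\Ext^\bullet(\C_p,\C_p)=\wedge^\bullet T_pY$ has dimensions $1,3,3,1$ for any $p\in S$) or from the resolution $0\to\sO_Y(-H)\to\sO_Y\to\sO_S\to 0$, which does not see the singularity of $S$ at $p$. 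Thus the $E_1$ page is identical to the table displayed in Lemma \ref{d+3}, and I would simply cite it.

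Next I would isolate which differentials change. The proof of Lemma \ref{d+3} already contains the relevant dichotomy: the second- and third-row maps, namely $\Ext^2(\C_p,\sO_S)\to\Ext^2(\C_p,\C_p)$ and $\Ext^1(\C_p,\C_p)\to\Ext^1(\sO_S,\C_p)$, are non-zero when $p$ is smooth but vanish when $p$ is singular, because their (co)kernels are controlled by $\Hom(I_{p|S},\C_p)\simeq T_pS$, whose dimension jumps from $2$ to $3$. All remaining $d_1$-differentials are unchanged: the top-row map $\Ext^3(\C_p,\sO_S)\to\Ext^3(\C_p,\C_p)$ is still an isomorphism (it is Serre-dual to the isomorphism $\Hom(\C_p,\C_p)\to\Hom(\sO_S,\C_p)$) and the bottom-row map is still surjective, since neither argument used smoothness.

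The one differential needing separate attention is the component $e_*\colon\Ext^1(\sO_S,\sO_S)\to\Ext^1(\sO_S,\C_p)$ of $d_1$ on the $q=1$ row, i.e. post-composition with $\sO_S\twoheadrightarrow\C_p$. This piece does \emph{not} involve $\Ext^\bullet(\C_p,\C_p)$, so I would check directly that it remains surjective in the singular case: via the local-to-global $\Ext$ spectral sequence one identifies $\Ext^1(\sO_S,\sO_S)\simeq H^0(N_{S/Y})=H^0(\sO_S(H))$ and $\Ext^1(\sO_S,\C_p)\simeq (N_{S/Y})_p$, under which $e_*$ becomes evaluation at $p$; this is onto because $\sO_S(H)=\sO_Y(1)|_S$ is globally generated, at any point of $S$. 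Consequently the $q=1$ row of $E_2$ is unaffected by the vanishing of the third-row map (the loss of $e^*$ is absorbed by $e_*$).

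Finally I would recompute $E_2$ and read off the abutment. The net effect of the vanishing of the second-row differential is that $E_2^{-1,2}$, which dies in the smooth case, now survives as a one-dimensional space, while $E_2^{0,2}$ becomes one dimension larger (dimension $3$ rather than $2$). Since $E_2^{-1,2}$ lies in total degree $-1+2=1$ and $E_2^{0,2}$ in total degree $2$, both $\dim\Ext^1$ and $\dim\Ext^2$ rise by one relative to Lemma \ref{d+3}, yielding $d+4$ and $3$, while degrees $0$ and $3$ are unchanged; one checks $E_2=E_\infty$ since the only possibly nonzero $d_2$ out of $E_2^{-1,2}$ lands in $E_2^{1,1}=0$, and as a sanity check $1-(d+4)+3-0=-d$ matches the smooth Euler characteristic. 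The main (and only genuine) obstacle is exactly the surjectivity of $e_*$ on the $q=1$ row: without it the bookkeeping would erroneously produce $d+5$ and $4$, so this is the single point where I must supply a short computation rather than quote the proof of Lemma \ref{d+3}.
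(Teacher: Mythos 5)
Your argument reaches the right answer, and for $d\geq 2$ it is complete and genuinely different from the paper's proof. The paper never analyzes the component $e_*\colon\Ext^1(\sO_S,\sO_S)\to\Ext^1(\sO_S,\C_p)$ of the $q=1$ differential at all: its displayed second page treats the entire $q=1$ row differential as zero (so that $E_2^{0,1}$ has dimension $d+4$ and $E_2^{1,1}$ has dimension $1$), which pins $\Ext^1(I_{p|S},I_{p|S})$ down only to $d+4$ or $d+5$, according to whether the $d_2$-differential $E_2^{-1,2}\to E_2^{1,1}$ is an isomorphism or zero; the ambiguity is then resolved by running the analogous spectral sequence a second time, starting from the presentation $\sO_Y(-1)\to I_p\to I_{p|S}$. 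You instead settle everything within the first spectral sequence by identifying $e_*$ with evaluation of sections of $N_{S/Y}\simeq\sO_S(1)$ at $p$, so the $q=1$ row of $E_2$ is unchanged from Lemma \ref{d+3} and no $d_2$ analysis is needed. When it applies, this is cleaner, it avoids the second spectral sequence entirely, and it in fact corrects the paper's $E_2$ table (the entry $E_2^{1,1}=\C$ there is wrong whenever evaluation at $p$ is onto).

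However, the one step you yourself flag as essential fails as stated in degree $1$, which is within the scope of the lemma. For $Y=Y_1$ the bundle $\sO_Y(1)$ is \emph{not} globally generated: the linear system $|\sO_Y(1)|$ has a base point $y_0$ (Section \ref{sec_d=1}), the point $y_0$ lies on every hyperplane section $S$, and since $H^1(\sO_Y)=0$ every section of $\sO_S(1)$ is the restriction of a section of $\sO_Y(1)$; hence evaluation $H^0(\sO_S(1))\to\sO_S(1)\otimes k(y_0)$ is the zero map, and the claim ``globally generated at any point of $S$'' is false. At $p=y_0$ your bookkeeping would collapse back to the $d+4$-versus-$d+5$ dichotomy, and one would need the paper's second argument after all. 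The gap is localized and fixable: a hyperplane section of $Y_1$ is never singular at $y_0$, since singular points of hyperplane sections lie on the curve $C$ sitting over the branch divisor $D$, while $y_0$ lies over the vertex of the Veronese cone, which is not on $D$; so the hypothesis that $p$ is singular already forces $p\neq y_0$, where evaluation is onto. With that observation added (and noting that for $d\geq 2$ your claim is fine, since $\sO_Y(1)$ is very ample for $d\geq 3$ and pulled back from the globally generated $\sO_{\pr 3}(1)$ for $d=2$), your proof is correct for all degrees.
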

\begin{proof}
From the above proof, we see that the dimensions of the objects on the second page are 
\begin{center}
\begin{tabular}{c|cc}
0 & 0 &0\\
1 & 3 &0\\
0 & $d+4$ &1\\
0 & 1 &0\\
\hline
\end{tabular}
\end{center}
and conclude that $\Ext^1(I_{p|S},I_{p|S})$ is either $\C^{d+4}$ or $\C^{d+5}$. Using the same spectral sequence as above, only this time starting with the short exact sequence 
\begin{equation}
    \label{def_IpS}
    \sO_Y(-1)\rightarrow \sI_p\rightarrow \sI_{p|S}
\end{equation}
it is not hard to see that it can only be $\C^{d+4}$.
\end{proof}

\begin{lemma}\label{dimensiond+1}
Suppose that $E=\iota_\ast F$, where $F=\sO_S(D)$ is a reflexive sheaf on a hyperplane section $\iota :S\rightarrow Y$.  Then $$R^i\sH\text{om}(E,E)=\begin{cases} \iota_\ast\sO_S & \text{ if }i=0\\ \iota_\ast\sO_S(1) & \text{ if }i=1\\
0 & \text{ otherwise.}\end{cases}$$
Therefore $\Ext^1(E,E)=\C^{d+1}$, and $\Ext^2(E,E)=\Ext^3(E,E)=0$.
\end{lemma}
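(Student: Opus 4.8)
The plan is to compute the local $\mathcal{E}xt$-sheaves $R^i\sH\text{om}_Y(E,E)$ first, and then feed the answer into the local-to-global spectral sequence to obtain the global $\Ext$-groups. Since $E=\iota_*F$ is supported on the divisor $S\in\abs{\sO_Y(1)}$, and $\mathcal{E}xt$-sheaves are local, I would work in a neighbourhood where $S$ is cut out by a single equation $f$ and where $F$ is trivial, i.e. $F\simeq \sO_S$; this reduction is legitimate wherever $F$ is locally free, in particular whenever $S$ is smooth along the support. There the two-term Koszul resolution $[\sO_Y(-1)\xrightarrow{\,f\,}\sO_Y]$ resolves $\iota_*\sO_S$, and applying $\sH\text{om}_Y(-,\iota_*\sO_S)$ turns it into the complex $[\iota_*\sO_S\xrightarrow{\,\cdot f\,}\iota_*\sO_S(1)]$ in degrees $0,1$, whose differential is multiplication by $f$ and hence vanishes on $S$.

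\textbf{The local Ext-sheaves.} This gives $R^0\sH\text{om}=\iota_*\sO_S$ and $R^1\sH\text{om}=\iota_*\sO_S(1)$ with all higher terms zero, the degree-one term being canonically the pushforward of the normal bundle $N_{S/Y}\simeq\sO_Y(1)_{|S}$ (Lemma \ref{lem_hyperplane_sections}(1)). To promote this from $\iota_*\sO_S$ to the twisted sheaf $\iota_*F$, I would note that $\sH\text{om}_S(F,F)\simeq\sO_S$: since $S$ is normal (Lemma \ref{lem_hyperplane_sections}) and $F$ is reflexive of rank one, $\sH\text{om}_S(F,F)$ is reflexive and agrees with $\sO_S$ away from the finitely many singular points, hence equals $\sO_S$ everywhere. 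Twisting the local computation by $\sH\text{om}_S(F,F)\simeq\sO_S$ and $\sH\text{om}_S(F,F(1))\simeq\sO_S(1)$ then yields the claimed $R^0\sH\text{om}(E,E)=\iota_*\sO_S$, $R^1\sH\text{om}(E,E)=\iota_*\sO_S(1)$, and $R^{\geq 2}\sH\text{om}(E,E)=0$. An equivalent coordinate-free derivation uses Grothendieck duality for $\iota$: from $L\iota^*\iota_*F\simeq F\oplus F(-1)[1]$ one gets $\iota^!\iota_*F\simeq F\oplus F(1)[-1]$, and $R\sH\text{om}_Y(\iota_*F,\iota_*F)\simeq \iota_*R\sH\text{om}_S(F,\iota^!\iota_*F)$ produces the same two sheaves.

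\textbf{Global Ext-groups.} With the local sheaves in hand, the local-to-global spectral sequence reads $E_2^{p,q}=H^p(S,R^q\sH\text{om}(E,E))\Rightarrow\Ext^{p+q}(E,E)$, so I only need the cohomology of $\sO_S$ and of $\sO_S(1)=-K_S$ on the del Pezzo surface $S$. For $S$ a smooth del Pezzo of degree $d$ one has $H^0(\sO_S)=\C$ and $H^{>0}(\sO_S)=0$, while Kodaira vanishing together with Riemann--Roch give $H^0(\sO_S(1))=H^0(-K_S)=\C^{d+1}$ and $H^{>0}(\sO_S(1))=0$. Hence the $E_2$-page is concentrated in the single column $p=0$, the spectral sequence degenerates for lack of room for differentials, and one reads off $\Ext^0(E,E)=\C$, $\Ext^1(E,E)=\C^{d+1}$, and $\Ext^2(E,E)=\Ext^3(E,E)=0$, as claimed.

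\textbf{Main obstacle.} The delicate point is the local reduction $F\simeq\sO_S$: it is immediate when $F$ is a line bundle (e.g. $S$ smooth, the case governing the smooth points parametrizing roots of $Y$), but if $S$ is singular and the root $D$ fails to be Cartier at a singular point, then $F$ is only reflexive there and the higher sheaves $\mathcal{E}xt^{\geq 1}_S(F,F)$ and $\mathcal{E}xt^{\geq 1}_S(F,F(1))$ need not vanish. Controlling these local contributions at the singularities of $S$ is the step I expect to require the most care; for the intended application to smoothness of the moduli component it suffices to treat the locally free case, where the computation above is clean.
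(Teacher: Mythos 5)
Your proof is correct and follows essentially the same route as the paper's: the paper computes $R\mathcal{H}om_Y(E,E)\simeq \iota_*R\mathcal{H}om_S(L\iota^*E,F)$ with $L^0\iota^*E=F$ and $L^1\iota^*E=F(-1)$ (your Koszul computation in different packaging), invokes $\mathcal{H}om_S(F,F)=\sO_S$ by reflexivity and normality, and then reads off the global $\Ext$-groups from the same spectral sequence together with the cohomology of $\sO_S$ and $\sO_S(1)$ on the (possibly degenerate) del Pezzo surface. The obstacle you flag is genuine --- when $F$ is not locally free at a singular point of $S$ the sheaves $\mathcal{E}xt^{\geq 1}_S(F,F)$ do not vanish, and the paper's proof silently ignores them --- but, as you observe, it is harmless for the intended application: the sheaves $\iota_*\sO_S(D)$ of class $w$ satisfy $\Ch_2(F)=D^2/2+\delta_S(D)=-1$ with $D^2=-2$, forcing $\delta_S(D)=0$, i.e.\ $D$ Cartier, so only the locally free case is ever used.
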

\begin{proof}
By adjunction, 
\begin{equation}\label{gvduality}
   R\HHom_Y(E,E) \simeq R\HHom_S(Li^*E,F). 
\end{equation}

We can compute $E\otimes^L\sO_S$ by tensoring the short exact sequence
$\sO_Y(-1)\rightarrow \sO_Y\to \sO_S$
with $E$. Then, since $\iota^\ast=\iota^{-1} (-\otimes^L\sO_S)$, we get $L^0\iota^\ast E=F$ and $L^1 \iota^\ast E=F(-1)$. Since $F$ is reflexive, we have $\sH om_S(F,F)=\sO_S$,  so the statement follows from the spectral sequence for hypercohomology of the complex in \eqref{gvduality}.
\end{proof}

In the following example we exhibit a flat family of sheaves whose general element has the form $\iota_*\sO_S(D)$, for $\iota\colon S \to Y$ a hyperplane section and $D$ a root of $S$, which specializes to $I_{p|S_0}$ (where $p$ is a singular point in a hyperplane section $S_0$).

\begin{example}[$I_{p/S}$ as a flat limit of $\iota_*(\sO_S(D))$'s]\label{explicitfamily}
Let $f:Y\rightarrow \pr{d+1}$ be the map induced by $-K_Y/2$ (assume $d\geq 2$). Denote by $\sS$ the universal hyperplane section. Let $h_0\in (\P^{d+1})^\vee$ cut out a hyperplane section  $S_0\subset Y$ that is singular at $p$. 
Let $\Delta\subset (\P^{d+1})^\vee$ be a 1-dimensional analytic disk containing $h_0$. Hyperplane sections with rational singularities arise by specializing the $9-d$ points in $\pr 2$ (Lemma \ref{lem_hyperplane_sections}). Therefore, after possibly replacing it with a finite cover, $\Delta$ supports a family of hyperplane sections of $Y$ constructed by blowing up $9-d$ sections $s_i$ of the projection $p:\Delta\times \pr 2\rightarrow \Delta$.

For $h_t\in \Delta$, we denote by $S_t\subset Y$ the corresponding hyperplane section, and by $f_t:S_t\rightarrow \pr d$ the restriction of $f$ to~$S_t$.   The sections $s_i$ are in general position for $t\neq 0$, and they are in almost general position (see Lemma \ref{lem_hyperplane_sections}) over $t=0$. We now assume that $s_1,s_2,s_3$ are collinear over $t=0$, but one can carry over the same argument for the other degenerations. Denote the blowup with $g:\sS'\rightarrow \Delta\times \pr{2}$. The map induced by the dual of the relative dualizing sheaf $\omega_{p\circ g}$ factors through the universal hyperplane section pulled back to~$\Delta$:
\begin{center}
\begin{tikzcd}
\sS' \arrow[r, "-\omega_{p\circ g}"]  \arrow[rd,dotted]
& \Delta\times \pr {d} & \\
& \sS_\Delta \arrow[u ] \arrow[r]& \Delta\times Y 
\end{tikzcd}
\end{center}
Here the vertical map is induced by the dual of the relative dualizing sheaf of $\sS_\Delta\rightarrow \Delta$. Its fibers are $f_t:S_t\rightarrow \pr d$.

Let $\sE_i$ be the exceptional divisors of $g$. Define $D=-g^\ast(\sO_{\pr{2}}(1))+\sE_1+\sE_2+\sE_3$. Then the fiber of $D$ over $t\neq 0$ is a Cartier divisor $D_t$ of degree $0$ and $D_t^2=-2$, and when $t=0$, $D$ restricts to $-E$, where $E$ is the $(-2)$ curve in the central fiber of the blowup $g$. If we pushforward to $\Delta\times Y$, we get a flat family of divisors $\iota_*(\sO_{S_t}(D_t))$ over a general fiber, and $I_{p/S_0}$ in the central fiber.
\end{example}

\subsection{Moduli spaces and wall crossing in the $(\alpha,\beta)$-plane}\label{sec_tilt moduli}

Now we turn to the study of the wall crossing; we investigate moduli spaces of stable objects of class $w$ with respect to a stability condition $\sigma_{\alpha,\beta}$ of Proposition \ref{def_tilt_stability}. In particular, we fix $\beta=-1/2$, $\alpha^+ \gg 1$ and $0<\alpha^-\ll 1$. We denote the corresponding moduli spaces by $\sM_+(w)$, $\sM_-(w)$ respectively. In Proposition \ref{prop_tilt} we describe them and show that the former coincides with $\sM_H(w)$ and is separated from the latter by a single wall. We need a preparatory Lemma:

\begin{lemma}\label{possiblesubs}
The only tilt-semistable sheaves  $A\in \Coh_{\alpha,-\frac{1}{2}}(Y)$ having truncated Chern class 
$\Ch^{-\frac{1}{2}}_{\leq 2}(A)=\left(1,\dfrac{H}{2},\dfrac{H^2}{8}\right)$ 
are $\sO_Y$ and $I_Z$, with $Z$ a zero-dimensional subscheme.
\end{lemma}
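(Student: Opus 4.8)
The plan is to first unwind the numerical data, then show that any such $A$ must be a torsion-free sheaf, and finally identify it through its reflexive hull. Expanding $\Ch^{-\frac12}(A)=\Ch(A)\cdot e^{\frac12 H}$ and comparing with the stated truncated class $\left(1,\frac{H}{2},\frac{H^2}{8}\right)$, one reads off $\Ch_0(A)=1$, $\Ch_1(A)=0$ and $\Ch_2(A)=0$; thus $A$ has the numerical invariants of $\sO_Y$ up to degree two. Since $A$ is a genuine sheaf lying in $\Coh^{-\frac12}(Y)$, it belongs to the torsion part of the tilt, i.e. all its $\mu_H$-Harder--Narasimhan factors have slope $>-\frac12$.

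Next I would rule out torsion. Let $T\subset A$ be the maximal torsion subsheaf, so that $A/T$ is torsion-free of rank $1$; being a quotient sheaf of $A$, it again lies in the torsion part of the tilt (this class is closed under quotients, being one half of a torsion pair in $\Coh(Y)$). If $T$ had a divisorial component, then $\Ch_1(T)=cH$ with $c\geq 1$, since $\Pic(Y)=\Z H$ and the first Chern class of a torsion sheaf is effective; this forces $\Ch_1(A/T)=-cH$. But the reflexive hull of the rank-$1$ sheaf $A/T$ is the line bundle $\sO_Y(-cH)$, so $\mu_H(A/T)=-c\leq -1<-\frac12$, contradicting the fact that $A/T$ lies in the torsion part of the tilt. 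Hence $T$ is supported in codimension $\geq 2$, so $\Ch_0(T)=\Ch_1(T)=0$ and $\Im Z_{\alpha,-\frac12}(T)=0$. If $T\neq 0$, it is then a subobject of $A$ in $\Coh^{-\frac12}(Y)$ with slope $\mu_{\alpha,-\frac12}(T)=+\infty>\mu_{\alpha,-\frac12}(A)$ (the latter being the finite value $\tfrac14-\alpha^2$), contradicting $\sigma_{\alpha,-\frac12}$-semistability. Therefore $T=0$ and $A$ is torsion-free.

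Finally, the reflexive hull $A^{\vee\vee}$ is a rank-$1$ reflexive sheaf on the smooth variety $Y$, hence a line bundle; since $\Pic(Y)=\Z H$ and $c_1(A^{\vee\vee})=\Ch_1(A)=0$, we get $A^{\vee\vee}\cong\sO_Y$. The natural inclusion $A\hookrightarrow A^{\vee\vee}=\sO_Y$ then exhibits $A$ as an ideal sheaf $I_Z$ of a subscheme $Z\subset Y$, necessarily of codimension $\geq 2$ because $\Ch_1(A)=0$. The vanishing $\Ch_2(A)=0$ kills any one-dimensional component of $Z$: a curve component of positive degree $e$ would contribute $\Ch_2(I_Z)=-e\,[\ell]\neq 0$, where $[\ell]$ is the class of a line. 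Hence $Z$ is zero-dimensional (possibly empty), giving $A=I_Z$ with $Z$ a zero-dimensional subscheme, or $A=\sO_Y$ when $Z=\varnothing$.

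The main obstacle is the torsion analysis: one must use the two defining properties of $A$ separately, namely membership in the tilted heart (to exclude divisorial torsion via the slope of the reflexive hull) and $\sigma_{\alpha,-\frac12}$-semistability (to exclude codimension-$\geq 2$ torsion via its infinite slope). Once torsion-freeness is established, the reflexive-hull identification and the Riemann--Roch bookkeeping are routine.
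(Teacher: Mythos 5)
Your proof is correct and follows the same outer skeleton as the paper's, but it handles the key middle step differently. Both arguments begin by unwinding the twisted Chern character to get $\Ch_{\leq 2}(A)=(1,0,0)$, and both end by classifying rank-one torsion-free sheaves with $\Ch_1=\Ch_2=0$. In between, the paper observes that $H^2\Ch_1^{-\frac12}(A)=\frac{H^3}{2}$ is the \emph{minimal} positive value of $H^2\Ch_1^{-\frac12}$ on $\Coh^{-\frac12}(Y)$ and simply quotes \cite[Lemma 7.2.2]{BMT14} to conclude that $A$ is a slope-stable, in particular torsion-free, sheaf; you instead prove torsion-freeness by hand, excluding divisorial torsion because $A/T$ would be a rank-one torsion-free sheaf of $\mu_H$-slope $\leq -1<-\frac12$ yet must lie in the torsion part of the tilt, and excluding torsion of codimension at least $2$ as a subobject of infinite tilt-slope. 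Your two-case analysis essentially reproves the special case of the cited lemma that is needed here, so it buys self-containedness at the cost of length, while the citation route also yields slope-stability of $A$ for free (not needed for the statement). One convention is worth flagging: your elimination of zero-dimensional torsion $T$ uses that a subobject with $Z_{\alpha,-\frac12}(T)=0$, hence infinite slope, destabilizes. This is valid for the notion of semistability for weak stability conditions used in \cite{BMT14} and \cite{BLMS17}, where one requires $\mu(F)\leq\mu(E/F)$ for every subobject $F$, but it does not follow from the quotient-only definition stated verbatim in the preliminaries of this paper: when $Z(T)=0$ the quotient $A/T$ has the same slope as $A$, so no quotient detects $T$. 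Since the paper's own appeal to \cite[Lemma 7.2.2]{BMT14} presupposes the stronger convention, your argument matches the intended reading; just make that convention explicit.
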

\begin{proof}
Such an object $A$ has $H^2\Ch_1^{-\frac{1}{2}}(A)=\frac{H^3}{2}$, which is the minimal positive value of $H^2\Ch_1^{-\frac{1}{2}}$ on objects of $\Coh_{\alpha,-\frac{1}{2}}(Y)$. By \cite[Lemma 7.2.2]{BMT14}, $A$ must be a slope-stable sheaf, so in particular torsion-free. Since the untwisted Chern character of $A$ is 
$\Ch_{\leq 2}(A)=\left(1,0,0\right)$,
we must have $A\simeq \sO_Y$ or $A\simeq I_Z$ for some zero-dimensional subscheme $Z$.
\end{proof}

We will also need the following definition: let $p$ be a point on $Y$ and $I_p$ its ideal sheaf; applying the functor $\Hom(\sO_Y, -)$ to the short exact sequence
\[
0 \to I_p(-1)\to \sO_Y(-1) \to \C_p \to 0
\]
and since $\sO_Y(-1)$ has no cohomology, one computes that $\Ext^1(\sO_Y,I_p(-1))=\C$. Serre duality with $K_Y=\sO_Y(-2)$ gives $\Ext^1(\sO_Y,I_p(-1))=\Ext^1(I_p, \sO_Y(-1)[1])^\vee$: let then $E_p$ be the unique non-trivial extension \eqref{eq_def_of_Ep} so that $[E_p]=w$.

\begin{proposition}\label{prop_tilt}
There are only two chambers in the $(\alpha,\beta)$-plane for the class $w$, and the line $\beta=-\frac 12$ intersects both. 
The moduli space of $\sigma_{\alpha,-\frac 12}$-semistable objects for $\alpha\gg 1$ is isomorphic to $\sM_H(w)$.
If $0< \alpha \ll 1$, $\sigma_{\alpha,-\frac 12}$-semistable objects of class $w$ have the form $\iota_*\sO_S(D)$ (where, as usual, $D$ is a root of a hyperplane section $S$), or $E_p$ as in \eqref{eq_def_of_Ep}.
\end{proposition}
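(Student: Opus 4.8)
The plan is to work entirely on the line $\beta=-\tfrac12$ and reduce everything to a single wall computation. First I would record the key numerical input: twisting $w$ by $e^{\frac12 H}$ gives $\Ch_{\le 2}^{-\frac12}(w)=(0,H,0)$, so that $Z_{\alpha,-\frac12}(w)=i\,H^3=i\,d$ is purely imaginary and $\mu_{\alpha,-\frac12}(w)\equiv 0$ for all $\alpha$. Solving the slope equality $\mu_{\alpha,\beta}(u)=\mu_{\alpha,\beta}(w)$ for an arbitrary class $u$ then shows (after cancellation) that every numerical wall for $w$ is a semicircle $\alpha^2+(\beta+\tfrac12)^2=\rho^2$ centered at $(0,-\tfrac12)$. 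In particular all walls are concentric and each meets the line $\beta=-\tfrac12$ at its radius $\alpha=\rho$, so it suffices to analyze walls along this line.

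Next I would bound the radii of the actual walls. If $0\to F\to E\to G\to 0$ destabilizes an $E$ of class $w$ on $\beta=-\tfrac12$, then $0<\Im Z_{\alpha,-\frac12}(F)<\Im Z_{\alpha,-\frac12}(E)=d$; since the minimal positive value of $H^2\Ch_1^{-\frac12}$ on $\Coh^{-\frac12}(Y)$ is $\tfrac d2$ (the estimate underlying Lemma \ref{possiblesubs}), both $F$ and $G$ must satisfy $H^2\Ch_1^{-\frac12}=\tfrac d2$. Integrality of $\Ch_1$ forces $\Ch_0(F)$ to be \emph{odd}, hence nonzero, so there is no destabilizer of rank $0$. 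The Bogomolov--Gieseker inequality $(H^2\Ch_1^{-\frac12}(F))^2\ge 2(H^3\Ch_0(F))(H\Ch_2^{-\frac12}(F))$ for tilt-semistable objects, combined with the on-wall relation $H\Ch_2^{-\frac12}(F)=\tfrac{\rho^2}{2}H^3\Ch_0(F)$, gives $\rho^2(H^3\Ch_0(F))^2\le \tfrac{d^2}{4}$, hence $\rho\le\tfrac{1}{2|\Ch_0(F)|}\le\tfrac12$, with equality exactly when $\Ch_0(F)=\pm1$ and $F$ has the numerical class of $\sO_Y$ (equivalently $I_Z$) or of $\sO_Y(-1)[1]$. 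Thus there are no walls above $\alpha=\tfrac12$, and $\sM_{\alpha,-\frac12}(w)$ is constant for $\alpha>\tfrac12$.

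I would then identify this moduli with $\sM_G(w)$. For a rank-zero class the tilt-slope $\mu_{\alpha,\beta}=H\Ch_2^\beta/H^2\Ch_1^\beta$ is independent of $\alpha$ and coincides with the slope governing Gieseker stability of pure two-dimensional sheaves, so it is enough to check that every $\sigma_{\alpha,-\frac12}$-semistable object of class $w$ with $\alpha>\tfrac12$ is a genuine sheaf ($\mathcal H^{-1}=0$ in $\Coh^{-\frac12}(Y)$); this follows from the large-volume behaviour of tilt-stability together with the absence of walls above $\tfrac12$. Conversely the Gieseker-semistable sheaves of Proposition \ref{gieseker} are tilt-semistable for $\alpha\gg0$, giving $\sM_+(w)\cong\sM_G(w)$. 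The unique wall at $\rho=\tfrac12$ is realized by the rotation of \eqref{def_IpS}, namely the short exact sequence $0\to I_p\to I_{p|S}\to \sO_Y(-1)[1]\to 0$ in $\Coh^{-\frac12}(Y)$, where $\Ch_0(\sO_Y(-1)[1])=-1$ produces exactly $\rho=\tfrac12$. Across this wall the slopes of the Jordan--H\"older factors $I_p$ and $\sO_Y(-1)[1]$ interchange, so the extension $I_{p|S}$ (stable for $\alpha>\tfrac12$) is replaced for $\alpha<\tfrac12$ by the opposite extension $E_p$ of \eqref{eq_def_of_Ep}; the roots $\iota_*\sO_S(D)$ already have class $w$, so they cannot appear as a proper sub- or quotient object here and are not destabilized at $\rho=\tfrac12$.

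Finally I would rule out any further wall in the strip $0<\alpha<\tfrac12$, which I expect to be the main obstacle. The radius bound leaves only two families of numerical candidates: destabilizers with $\Ch_0=\pm1$ but $H\Ch_2^{-\frac12}<\tfrac d8$ (so $\rho<\tfrac12$), and those with $|\Ch_0|\ge3$ (forced by parity, so $\rho\le\tfrac1{2|\Ch_0|}\le\tfrac16$). I plan to exclude both by showing that the objects stable just below $\alpha=\tfrac12$, namely the $E_p$ and the roots $\iota_*\sO_S(D)$, stay stable as $\alpha\to0^+$: for each such $E$ one analyzes a hypothetical destabilizing subobject $F$ via the long exact sequence of $\Coh^{-\frac12}(Y)$-cohomology sheaves, using that $\mathcal H^0(F)$ then has rank one and that $\sO_Y(-1)$ and $I_p$ are themselves tilt-stable, to contradict $\mu_{\alpha,-\frac12}(F)=0$. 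Local finiteness of walls \cite[Prop. B.5]{BMS16} ensures only finitely many radii need to be checked. Together with the previous steps this shows that the $(\alpha,\beta)$-plane has exactly two chambers for $w$, both met by $\beta=-\tfrac12$, and that the $\sigma_{\alpha,-\frac12}$-semistable objects for $0<\alpha<\tfrac12$ are precisely the $\iota_*\sO_S(D)$ and the $E_p$.
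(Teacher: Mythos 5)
Your proposal reproduces the paper's skeleton (concentric walls, the Gieseker chamber at large $\alpha$, the wall at $\rho=\tfrac12$ realized by $I_p\to I_{p|S}\to \sO_Y(-1)[1]$, the $E_p$ appearing below it), but it has a genuine gap exactly at the heart of the statement: the claim that there are \emph{only two} chambers. Your radius bound uses only one half of the support property, namely the Bogomolov--Gieseker inequality $\Delta(F)\ge 0$, which gives $\rho\le \tfrac{1}{2|\Ch_0(F)|}$ and therefore does \emph{not} exclude walls of radius strictly smaller than $\tfrac12$: for instance a destabilizer with $\Ch^{-\frac12}_{\le 2}=(3,\tfrac{H}{2}, zH^2)$ and $0<z\le \tfrac1{24}$, or one with $\Ch_0=\pm1$ and $H\Ch^{-\frac12}_2<\tfrac{d}{8}$, passes every numerical test you impose. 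The paper kills all of these at once by two ingredients you never invoke: the \emph{upper} bound $\Delta(A)\le \Delta(E)=1$ from the support property (giving $-1\le -8xz\le 3$), and the integrality $8z\in\Z$ of the twisted second Chern character along $\beta=-\tfrac12$; together with $x\in\Z$, $xz>0$ these force $8xz=1$, i.e.\ $(x,y,z)=(1,\tfrac12,\tfrac18)$ up to sign, so there is exactly one numerical wall and stability of the $E_p$ and of the roots in the lower chamber is automatic. Your proposed substitute --- showing directly that the $E_p$ and the roots stay stable as $\alpha\to 0^+$ by analyzing cohomology sheaves of a hypothetical destabilizer --- is announced (``I plan to'', ``I expect'') but never carried out, and it is not routine: one would have to handle every odd rank and every $\Ch_2$ permitted by your bound, separately for each $E_p$ and each root, and the sketch gives no mechanism to contradict $\mu_{\alpha,-\frac12}(F)=0$ for, say, a rank-three subobject.

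There is also a secondary lapse at the wall itself: your reason that the roots survive --- ``$\iota_*\sO_S(D)$ already has class $w$, so it cannot appear as a proper sub- or quotient object'' --- answers the wrong question. What must be excluded is a destabilizing \emph{subobject of the root} whose class lies on the wall (numerically that of $\sO_Y$, $I_Z$, or $\sO_Y(-1)[1]$); the class of the root is irrelevant to this. The paper proves it via Lemma \ref{possiblesubs}, which shows the potential subobject is literally $\sO_Y$ or $I_Z$, and then observes that any nonzero map $\sO_Y\to\iota_*\sO_S(D)$ or $I_Z\to \iota_*\sO_S(D)$ would produce a section of $\sO_S(D)$, impossible because $D$ is not effective (Lemma \ref{lem_hyperplane_sections}). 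Finally, your identification $\sM_+(w)\simeq \sM_G(w)$ is looser than the paper's: for torsion classes the tilt slope only sees $\Ch_{\le 2}$ while Gieseker stability involves $\chi$, so ``the two slopes coincide'' needs the argument via \cite[Lemmas 7.2.1, 7.2.2]{BMT14} that the paper gives; this part is patchable, but the missing wall analysis is not.
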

\begin{proof}
The isomorphism $\sM_+(w)\simeq \sM_H(w)$ is a standard argument: every object in $\sM_+(w)$ is Gieseker-stable \cite[Lemma 7.2.1]{BMT14}. Conversely, assume $E$ is a Gieseker-stable sheaf of class $w$ on $Y$. Then $H^2\Ch^{-\frac 12}_1(E)=H^3$ is twice the minimal positive value of $H^2\Ch^{-\frac 12}_1$ on $\Coh^{-\frac12}(Y)$. Then, the only possibility for a destabilizing sequence $A \to E\to B$ is that $A$ satisfies $H^{2}\Ch^{-\frac12}_1(A)\leq\frac{H^3}{2}$, and hence $A$ must be a slope stable sheaf of dimension $\geq 2$ (by applying \cite[Lemma 7.2.2]{BMT14} to $A$, and observing that $H^{-1}(A)=0$, since $H^{-1}(E)=0$). If $A$ has dimension 2, we must have $H^{-1}(B)=0$, but $E$ is pure of dimension 2 so $A$ cannot destabilize it. If, instead, $\rk(A)>0$, we must have $\mu_H(A)>-\frac{1}{2}$ and therefore $H^2\Ch^{\frac 12}_1(A)>0$. Then, one has 
\[ \mu_{\alpha,-\frac12}(A)= \frac{H\Ch^{\frac12}_2(A) - \frac{\alpha^2}{2}\Ch_0(A)}{H^2\Ch^{\frac 12}_1(A)} \xrightarrow{\alpha \to +\infty} - \infty,\]
so $A$ cannot destabilize for $\alpha$ sufficiently large. Equivalently, $E$ is $\sigma_{\alpha^+,-\frac12}$-stable if and only if it is Gieseker stable, and the two moduli spaces are isomorphic.

Since $w$ is a torsion class, walls in the $(\alpha,\beta)$-plane are concentric semicircles centered at $\alpha=0$ and $\beta = -\dfrac{1}{2}$ \cite[Theorem 2.2]{BMSZ17}. In particular, the line $\beta = -\dfrac{1}{2}$ intersects all the possible walls for the class $w$. The twisted Chern character of $E$ satisfies
\[
\Ch^{-\frac{1}{2}}_{\leq 2}(E) = (0,H,0),
\]
hence if $A$ gives a numerical wall for $E$ and $\Ch^{-\frac{1}{2}}_{\leq 2}(A)=(x,y,z)$, the equation of the wall at the point with coordinate $\beta=-\dfrac{1}{2}$ yields
\[
z-\dfrac{\alpha^2}{2}x=0
\]
meaning that $z$ and $x$ must have the same sign and cannot be equal to 0 (otherwise they would not give a wall).

An actual wall in tilt-stability intersecting the vertical line $\beta=-1/2$ must be given by a short exact sequence $0\to A \to E \to B \to 0$ with $0< \Ch_1^{-1/2}(A)< \Ch_1^{-1/2}(E)=1$; since $\Ch_1^{-1/2}(A)\in \frac{1}{2}\Z$, we get $\Ch_1^{-1/2}(A)=\dfrac{1}{2}$.

Moreover, by the support property one has $0 \leq \Delta(A) \leq \Delta(E)=1$, which can be rearranged to
\[
-1 \leq -8xz \leq 3;
\]
since $\beta=-\dfrac{1}{2}$, it follows that  $8z \in \Z$, and since $x$ and $z$ have the same sign and cannot be 0 we must have $x=1$ and $z=\dfrac{1}{8}$.

This means there is only one possible actual wall, and it would be given by a subobject with $\Ch^{-\frac{1}{2}}_{\leq 2}(A)=\left(1,\dfrac{H}{2},\dfrac{H^2}{8}\right)$. As before, $A$ must be a sheaf and we can assume $A$ to be tilt-semistable. By Lemma \ref{possiblesubs}, either $A\simeq \sO_Y$ or $A\simeq I_Z$ for $Z$ a zero-dimensional subscheme.

Now if $E=\iota_*\sO_S(D)$, with $D$ as in Proposition~\ref{gieseker}, neither of the $A$'s can map to $E$: in both cases, a map $A\to E$ would produce a section of $\sO_S(D)$, by restricting to $S$ and then dualizing $A_{|S} \to \sO_S(D)$ twice. This does not happen since $D$ is not effective by Lemma \ref{lem_hyperplane_sections}.

On the other hand, there are always maps $I_p \to I_{p/S}$ fitting in the triangle
\[
0\to I_p \to I_{p/S} \to \sO(-1)[1]\to 0.
\]
This means that the numerical wall is an actual wall: all objects in $\sM_H(w) \setminus \mathcal{P}_d$ are stable on both sides of the wall, while the $I_{p/S}$'s are destabilized. In turn, the unique extensions
\[
0 \to \sO(-1)[1]\to E_p \to I_p \to 0,
\]
are also unstable at the wall and become stable below it, since both $\sO(-1)[1]$ and $I_p$ are stable, and stay such in the whole chamber because there are no other walls.
\end{proof}

\begin{lemma}\label{lem_Families_IpEp}
There is an embedding $\phi\colon Y\to \sM_-(w)$, whose image $\mathcal Y$ parametrizes objects defined as in \eqref{eq_def_of_Ep}.
\end{lemma}

\begin{proof}
Let $\sI\in D^b(Y\times Y)$ be a universal family of ideal sheaves of points of $Y$. Then, using the notation of \ref{sec_basechange_mutations}, we consider the projection $\mathbb R_{\sO_Y(-1)}^Y(\sI) \in \!^\perp\pair{\sO_Y(-1)}_Y$. The fibers of $\mathbb R_{\sO_Y(-1)}^Y(\sI)$ over $Y\times \set{[I_p]}$ fit in triangles
\[ \mathbb R_{\sO_Y(-1)}^Y(\sI)_p \to I_p \to \sO_Y(-1)[2]. \]
Hence $\mathbb R_{\sO_Y(-1)}^Y(\sI)_p\simeq E_p$ by \eqref{eq_def_of_Ep}. Therefore, $\mathbb R_{\sO_Y(-1)}^Y(\sI)$ is a family of complexes in $\sM_-(w)$ of the form \eqref{eq_def_of_Ep} over $Y$, which induces a morphism $\phi\colon Y \to \sM_-(w)$. The morphism is clearly injective at the level of sets, and it is an embedding because it is injective on tangent spaces, as shown in Lemma \ref{lem_numerics}.
\end{proof}

\begin{lemma} \label{lem_numerics}
Fix $p\in Y$ and let $E_p$ be one of the objects as in (\ref{eq_def_of_Ep}); then there is an identification
 \begin{equation}\label{eq_decomposition_of_tangent_space}
     \Ext^1(E_p,E_p)= \Ext^1(I_p,I_p) \oplus V_p\simeq \C^3\oplus V_p,
\end{equation}
where $V_p$ is the space of hyperplanes tangent to $Y$ at $p$.
\end{lemma}

\begin{proof}
To describe the splitting, first observe that $\Ext^1(E_p,E_p)\simeq \Ext^1(E_p,I_p)$ (\textit{e.g.} by applying $\Hom(E_p,-)$ to the sequence \eqref{eq_def_of_Ep}). Applying $\Hom(-,I_p)$ to \eqref{eq_def_of_Ep} one obtains
\begin{equation*}
\begin{split}
    &\Ext^3(E_p,E_p)=0; \\
     0\to \Ext^1(I_p,I_p) \to   \Ext^1(E_p,I_p) & \to \ker\left(\Ext^1(\sO(-1)[1],I_p)\xrightarrow{\alpha} \Ext^2(I_p,I_p)\right) \to 0,
\end{split}
\end{equation*}
where $\alpha$ is pre-composition with the unique map $\epsilon_p :I_p\to \sO(-1)[2]$ of the triangle \eqref{eq_def_of_Ep}. Let $V_p\coloneqq \ker\alpha$. Since $\Ext^1(\sO(-1)[1],I_p)\simeq H^0(I_p(1))$, every element $f\in \Ext^1(\sO(-1)[1],I_p)$ can be identified with a hyperplane section $S_f$ containing $p$.

We now prove that a nonzero $f\in \Ext^1(\sO(-1)[1],I_p)$ is in $\ker \alpha$ if and only if $S_f$ is singular at $p$. We have the following diagram:
\begin{center}
\begin{tikzcd}[]
  & I_p \dar["\epsilon_p"] &\\
I_{p/S}[1] \rar &\sO(-1)[2] \rar["f"] & I_p[2].
\end{tikzcd}
\end{center}
Then $\alpha(f)=f\epsilon_p=0$ if and only if $\epsilon_p$ lifts to a map $I_p\rightarrow I_{p/S}[1]$. Next, we check that $\epsilon_p$ is in the image of $\Hom(I_p,I_{p/S}[1])\rightarrow \Hom(I_p,\sO(-1)[2])\simeq \C$ if and only if $p\in S$ is a singular point. Indeed, applying $\Hom(I_p,-)$ to the triangle $$\sO(-1)\to I_p\to I_{p|S}$$ we get a long exact sequence
$$0\to \C^3\to \Ext^1(I_p,I_{p|S})\to \Hom(I_p,\sO(-1)[2]) \to \cdots$$
Thus, it suffices to argue that $\Ext^1(I_p,I_{p|S})$ is $3$-dimensional if $p\in S$ is smooth, and $4$-dimensional if $p\in S$ is singular. To see this, apply $\Hom(-,I_{p|S})$ to the sequence $I_p\to \sO_Y\to \C_p$.
\end{proof}

\begin{remark}\label{rmk_DimOfVp}
A simple dimension count shows that $V_p$ has dimension $d-2$ for degree $d\geq 3$. For degrees $d=2$ (resp. $d=1$), $V_p$ is empty unless $p\in R$ (resp. $p\in C$), in which case $V_p$ is one-dimensional, generated by the unique tangent hyperplane to $p$. 
\end{remark}

\begin{proposition}\label{prop_GiesekerVSTilt}
Let $\sY =\phi(Y)$ the locus defined in Lemma \ref{lem_Families_IpEp}.
Wall-crossing induces a surjective morphism $s:\sM_H(w) \to \mathcal{M}_-(w)$. Moreover, $s$ is an isomorphism on $\sM_H(w) \setminus \mathcal{P}\simeq \mathcal{M}_-(w)\setminus \sY$ and its fiber over a point $p$ of $\sY$ is the projective space $\mathbb P(H^0(I_p(1)))$. 
\end{proposition}

\begin{proof}

To define $s$, we construct a family of $\sigma_{\alpha,\beta}$-stable objects on $\sM_H(w)$, starting from the universal family $\sG$ of Gieseker stable sheaves on $Y\times \sM_H(w)$. 

Let $\mathcal J \coloneqq \mathbb R_{\sO_Y(-1)}^{\sM_H(w)}(\sG)\in D(Y\times \sM_H(w))$ be the projection of $\sG$ onto the category $\!^\perp\pair{\sO_Y(-1)}_{\sM_H(w)}$ (notation as in Section \ref{sec_basechange_mutations}). We claim that $\mathcal J$ parametrizes $\sigma_{\alpha,\beta}$-stable object, and therefore induces a morphism $s\colon\sM_H(w)\to \sM_-(w)$. 

This can be checked after restricting to fibers $Y\times \set{x}$ for $x\in \sM_H(w)$. 
If $x$ represents a sheaf $I_{p|S}$, then one computes $\Hom^\bullet(I_{p|S},\sO_Y(-1))\simeq \C[1]\oplus \C[2]$ and sees that
$\mathcal J_x$ fits in the right mutation triangle
\[ \mathcal J_x \to I_{p|S} \xrightarrow{\mathrm{coev}} \sO_Y(-1)[1]\oplus \sO_Y(-1)[2]. \]
Applying the octahedral axiom to the composition of $\mathrm{coev}$ with the projection onto the summand $\sO_Y(-1)[1]$, we find a distinguished triangle 
\[ \mathcal J_x \to I_p \to \sO_Y(-1)[2]. \]
It follows that $\mathcal J_x$ is isomorphic to $E_p$ (see the triangle \eqref{eq_def_of_Ep}) and is $\sigma_{\alpha,\beta}$-stable. 

On the other hand, if $x$ represents a sheaf $\iota_*\sO_S(D)$, we have 
\[ \Hom^*(\iota_*\sO_S(D),\sO_Y(-1)) \simeq \Hom^{3-*}(\sO_Y(1), \iota_*\sO_S(D))^\vee=0, \]
by Serre duality and Lemma \ref{osd} below, so $\mathcal J_x\simeq \iota_*\sO_S(D)$ is $\sigma_{\alpha,\beta}$-stable.

This defines the map $s$. The statements about its properties also follow from the description of the objects $\mathcal J_x$ just given: for example, the map $s_{|\mathcal P}\colon \mathcal P \to \mathcal Y$ coincides with the assignment $[I_{p|S}]\mapsto p$, and the fiber over a point $p\in \mathcal Y$ is the projective space $\mathbb P(\Hom(\sO_Y(-1),I_p))\simeq\mathbb P(H^0(I_p(1)))\simeq \pr d$.
\end{proof}

\subsection{The moduli space $\sM_\sigma(w)$}\label{sec_sigma_moduli}

Fix $0<\alpha\ll 1$ and $\beta=-\frac 12$. Consider the weak stability condition $\sigma^0_{\alpha,\beta}=(\Coh^0_{\alpha,\beta}(Y), Z_{\alpha, \beta}^0)$ on $D(Y)$ defined in Proposition \ref{tiltoftilt}. Recall  that $\Coh^0_{\alpha,\beta}(Y)$ is a tilt of $\Coh^\beta(Y)$, and 
\[
Z_{\alpha,\beta}^0(E) = H^2\Ch^{\beta}_1(E) +i \left(H\Ch_2^\beta(E) -\dfrac{\alpha^2}{2}H^3\Ch_0(E)\right).
\]
The condition $\sigma^0_{\alpha,\beta}$ gives rise to a stability condition $\sigma=(\Ku(Y) \cap \Coh(X)^{0}_{\alpha, \beta},Z^0_{\alpha,-\frac{1}{2}})$ on $\Ku(Y)$ by Proposition~\ref{thm_stab_con_on_Ku}. In this section, we focus on the moduli space $\mathcal{M}_\sigma(w)$ of $\sigma$-semistable objects of $\Ku(Y)$ of class $w$, and we prove:

\begin{proposition}\label{kuzistilt}
$\mathcal{M}_\sigma(w)$ is isomorphic to $\mathcal{M}_-(w)$.
\end{proposition}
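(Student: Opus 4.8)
The plan is to show that the two moduli problems single out the same objects of class $w$ and that the corresponding notions of stability coincide, so that the shift autoequivalence $[1]$, which is the identity on the underlying class-$w$ objects, induces the isomorphism. First I would record how the hearts are related: $\Coh^0_{\alpha,\beta}(Y)$ is the tilt of $\Coh^\beta(Y)$ at slope $\mu_{\alpha,-\frac12}=0$ (Proposition \ref{tiltoftilt}), and I write $\mathcal T^0,\mathcal F^0$ for the two halves of the torsion pair of Proposition \ref{tiltofheart}. Since $\Ch^{-\frac{1}{2}}_{\leq 2}(w)=(0,H,0)$, one has $Z_{\alpha,-\frac12}(w)=i\,d$, so $\mu_{\alpha,-\frac12}(w)=0$, while $Z^0_{\alpha,-\frac12}(w)=d$. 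Hence any $\sigma_{\alpha,-\frac12}$-semistable $F$ of class $w$ lies in $\mathcal F^0$, so that $F[1]\in\Coh^0_{\alpha,\beta}(Y)$ with $Z^0_{\alpha,-\frac12}(F[1])=-d$; that is, $F[1]$ sits at the maximal phase $1$ of the heart.

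For the forward direction, Proposition \ref{prop_tilt} tells me such an $F$ is either $\iota_*\sO_S(D)$ or $E_p$, and for both I would check $\Hom^\bullet(\sO_Y,F)=\Hom^\bullet(\sO_Y(1),F)=0$ (for $E_p$ from the defining triangle \eqref{eq_def_of_Ep}, for $\iota_*\sO_S(D)$ from Lemma \ref{osd} and Serre duality, exactly as in the proof of Proposition \ref{prop_GiesekerVSTilt}); hence $F\in\Ku(Y)$ and $F[1]\in\sA=\Ku(Y)\cap\Coh^0_{\alpha,\beta}(Y)$. Because $F[1]$ has the extremal phase $1$ it is automatically $\sigma$-semistable, so stability reduces to showing that $F[1]$ is simple in $\sA$. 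Given a proper $0\to A\to F[1]\to Q\to0$ in $\sA$, additivity of $Z^0$ forces $\Im Z^0(A)=\Im Z^0(Q)=0$, so $A$ and $Q$ again have phase $1$; passing to $\Coh^\beta$-cohomology produces a subobject $\sH^{-1}_{\Coh^\beta}(A)\subset F$ in $\Coh^\beta$ whose cokernel lands in $\mathcal F^0$, which the slope-$0$ stability of $F$ forbids unless $\sH^{-1}_{\Coh^\beta}(A)\in\{0,F\}$. The two extreme cases then force $A=0$ or $A=F[1]$ (the first because a nonzero object of $\mathcal T^0$ cannot have phase $1$). This makes $F[1]$ $\sigma$-stable and yields a morphism $\sM_-(w)\to\sM_\sigma(w)$.

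For the converse, let $G$ be $\sigma$-stable of class $w$. The containment $G[1]\in\Coh^0_{\alpha,\beta}(Y)$ concentrates the $\Coh^\beta$-cohomology of $G$ in degrees $0,1$, with $\sH^0_{\Coh^\beta}(G)\in\mathcal F^0$ and $\sH^1_{\Coh^\beta}(G)\in\mathcal T^0$, and comparing the real and imaginary parts of $Z_{\alpha,-\frac12}(G)=i\,d$ forces $\sH^1_{\Coh^\beta}(G)$ to lie in $\ker Z_{\alpha,-\frac12}$, hence to be a zero-dimensional sheaf. The main obstacle is to exclude a nonzero such sheaf: here I would exploit the orthogonality $\Hom^\bullet(\sO_Y,G)=\Hom^\bullet(\sO_Y(1),G)=0$ together with the support property and the minimality of the class $w$ recorded in Proposition \ref{prop_tilt} to conclude $\sH^1_{\Coh^\beta}(G)=0$. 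Then $G\in\Coh^\beta(Y)$ has slope $0$, and the same cohomological bookkeeping shows it admits no tilt-destabilizing subobject or quotient, so $G$ represents a point of $\sM_-(w)$.

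Finally I would globalize. Both assignments are realized by the shift $[1]$ on objects, and all the operations involved (the projection/mutation functors and the cohomology truncations) commute with base change, so they upgrade to mutually inverse natural transformations of the two moduli functors. Since $\Ku(Y)\subset D^b(Y)$ is admissible, $\Ext^1$ is the same whether computed in $\Ku(Y)$ or in $D^b(Y)$, so the bijection is an isomorphism on Zariski tangent spaces as well (cf. \eqref{eq_Tangent_M_G}); this gives the scheme-theoretic isomorphism $\sM_\sigma(w)\simeq\sM_-(w)$. I expect essentially all of the difficulty to be concentrated in the converse, specifically in ruling out the zero-dimensional kernel contribution to the $\Coh^\beta$-cohomology of a $\sigma$-stable object.
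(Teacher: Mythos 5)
Your architecture matches the paper's: identify the semistable objects on the two sides pointwise, in both directions, and then conclude that the two moduli spaces corepresent isomorphic functors. Your forward direction is essentially the paper's argument, since it rests on the classification of Proposition \ref{prop_tilt}, on the $\Ku(Y)$-membership checks (these are exactly the paper's Lemmas \ref{ep} and \ref{osd}), and on maximal phase plus primitivity of $w$ (the paper's Remark \ref{rmkstable}). The problem is the converse, and you have correctly located it yourself: ruling out a nonzero zero-dimensional sheaf $T=\sH^0_{\Coh^\beta}(G[1])\in\mathcal{T}^0$. At that point you only say you ``would exploit the orthogonality \dots together with the support property and the minimality of the class $w$'' --- but this is not an argument, and the ingredients you name do not obviously produce one: the wall analysis of Proposition \ref{prop_tilt} and the support property concern $\Ch_{\leq 2}$ only, hence are blind to the point-length $t$ of $T$. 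The paper closes exactly this gap in Lemma \ref{prop_semistable_objects2} by a genuinely nontrivial external input: Li's bound $\Ch_3^\beta(E')\leq 0$ for tilt-semistable objects on Fano threefolds of Picard rank one \cite{Li15}, which applied to $E'$ of class $w+t\,[\mathrm{pt}]$ forces $t=0$. As written, your proof is missing its central step.

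For what it is worth, your instinct that the orthogonality can do the job is salvageable, but it requires a concrete chain you did not supply. From $\Hom^\bullet(\sO_Y(1),G)=0$ and the triangle $E'\to G\to T[-1]$ one gets $\C^t\simeq\Hom^2(\sO_Y(1),E')$, which by Serre duality (with $\omega_Y\simeq\sO_Y(-2)$) is $\Hom\left(E',\sO_Y(-1)[1]\right)^\vee$; since $E'$ lies in $\mathcal F^0$ with $\mu_{\alpha,-\frac12}(E')=0$ it is automatically tilt-semistable of slope $0$, while $\sO_Y(-1)[1]\in\Coh^{-\frac12}(Y)$ is tilt-semistable of slope $\alpha^2-\tfrac14<0$, so this Hom vanishes and $t=0$. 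Either such an argument or the paper's appeal to \cite{Li15} is indispensable. A smaller point: your parenthetical claim that ``a nonzero object of $\mathcal T^0$ cannot have phase $1$'' is false as stated, since zero-dimensional sheaves lie in $\mathcal T^0$ and have $Z_{\alpha,\beta}=0$, hence maximal slope for the weak stability function; to exclude them you must additionally use that they do not lie in $\Ku(Y)$, or argue via primitivity of $w$ as in Remark \ref{rmkstable}.
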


We start by relating stability with respect to $\sigma$,  $\sigma^0_{\alpha,-\frac12}$, and $\sigma_{\alpha,\beta}$.

\begin{remark} \label{rmkstable} Notice that  with this choice of stability condition, $Z^0_{\alpha,\beta}(w)=1$, so  an object $E$ of class $-w$ is stable if and only if an even shift $E[i]$ of it belongs to the heart $\Coh^0_{\alpha,\beta}(Y)$. Indeed, such an object $E[i]$ has infinite slope, so it is semistable; moreover, the class $w$ is primitive in $\Ku(Y)$, so that there are no strictly semistable objects of that class.
\end{remark}


\begin{lemma}\label{connection}
Let $E\in \Ku(Y)$ of class $-w$. Then $E$ is stable with respect to $\sigma$ if and only if $E$ is $\sigma^0_{\alpha,-\frac{1}{2}}$-semistable in $D(Y)$. 
\end{lemma}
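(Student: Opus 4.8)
The plan is to reduce both halves of the asserted equivalence to a single, purely categorical condition: that some \emph{even} shift of $E$ lies in the heart $\Coh^0_{\alpha,-\frac12}(Y)$. Once this is done, the two notions of stability become manifestly the same for objects of $\Ku(Y)$ of class $-w$.

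First I would unwind the genuine stability side using Remark \ref{rmkstable}. Since $Z^0_{\alpha,-\frac12}(w)=1$ is real and $w$ is primitive in $K_{num}(\Ku(Y))$, that remark tells us that $E$ of class $-w$ is $\sigma$-stable precisely when some even shift $E[i]$ lies in the heart $\sA=\Ku(Y)\cap\Coh^0_{\alpha,-\frac12}(Y)$ of the induced Bridgeland stability condition (Proposition \ref{thm_stab_con_on_Ku}). Because $\Ku(Y)$ is closed under shifts and $E\in\Ku(Y)$, the membership $E[i]\in\sA$ is equivalent to $E[i]\in\Coh^0_{\alpha,-\frac12}(Y)$.

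Next I would treat the weak stability side. An object of class $-w$ has central charge $Z^0_{\alpha,-\frac12}(-w)=-1$, which is real and nonzero; hence any shift $E[m]$ that the heart can contain must be even (an odd shift would have $\Re=+1>0$ while $\Im=0$, violating the weak stability function inequality), and such a shift has infinite slope. The key observation is that an object of the heart of infinite slope is \emph{automatically} semistable in the weak stability condition: for every subobject $A\hookrightarrow E[m]$ in $\Coh^0_{\alpha,-\frac12}(Y)$ one has $\mu(A)\le+\infty=\mu(E[m])$, since the weak stability axioms forbid $A$ from having $\Im Z=0$ together with $\Re Z>0$. By Definition \ref{def_stability_app}, $E$ is $\sigma^0_{\alpha,-\frac12}$-semistable exactly when an even shift lands in the heart and is semistable there; by the previous sentence the semistability is free, so $\sigma^0_{\alpha,-\frac12}$-semistability of $E$ is again equivalent to $E[i]\in\Coh^0_{\alpha,-\frac12}(Y)$ for some even $i$. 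Comparing with the previous paragraph closes the equivalence.

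I expect the only genuinely delicate point to be the weak-stability direction, where a priori a destabilizing subobject could live in the larger heart $\Coh^0_{\alpha,-\frac12}(Y)$ rather than in $\sA$; the infinite-slope argument is exactly what neutralizes this, since it makes semistability insensitive to where subobjects are taken. The upgrade from semistable to stable on the $\sigma$-side, needed to match the wording of the statement, is supplied by the primitivity of $w$ already invoked in Remark \ref{rmkstable}: a proper nonzero subobject of the same infinite slope would have class a negative integer multiple of $w$, contradicting primitivity, so no strictly semistable object of class $-w$ exists.
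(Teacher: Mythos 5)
Your proposal is correct and takes essentially the same route as the paper's proof: both rest on Remark \ref{rmkstable} together with the observation that an object of the heart whose central charge is real (maximal phase, i.e.\ infinite slope) is automatically semistable, so that each of the two notions of stability amounts to an even shift of $E$ lying in $\Coh^0_{\alpha,-\frac{1}{2}}(Y)$. One cosmetic remark: since the paper defines (semi)stability via quotients, the automatic-semistability step is best justified by additivity of $\im Z^0_{\alpha,-\frac{1}{2}}$ on short exact sequences (any quotient of an object with vanishing imaginary part again has vanishing imaginary part, hence infinite slope) rather than by the subobject inequality you invoke; on the other hand, your explicit parity argument ruling out odd shifts is a detail the paper leaves implicit.
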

\begin{proof}
If $E \in \Ku(Y)$ is $\sigma$-stable, up to a shift we have $E \in \mathcal{A} = \Coh^0_{\alpha,-\frac{1}{2}}(Y) \cap \Ku(Y)$, so in particular $E\in \Coh^0_{\alpha,-\frac{1}{2}}(Y)$. Since $E$ is in the heart of a (weak) stability condition and it has maximal phase (because $\im Z^0_{\alpha,\beta}(E)=\im Z^0_{\alpha,\beta}(w)=0$), by definition $E$ must be semistable.

Conversely, if $E\in \Ku(Y)$ is semistable in $\Coh^0_{\alpha,-\frac{1}{2}}(Y)$ then in particular $E$ belongs to the heart $\mathcal{A}$, hence it is stable by the above Remark \ref{rmkstable}.
\end{proof}


\begin{lemma}\label{prop_semistable_objects2}
If $E\in \Coh^0_{\alpha,\beta}(Y)$ be $\sigma^0_{\alpha,\beta}$-semistable object of class $-w$, then $E[-1]\in \Coh^\beta(Y)$ and it is $\sigma_{\alpha,\beta}$-semistable.
\end{lemma}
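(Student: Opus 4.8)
The plan is to show that $E$ lies in $\Coh^\beta(Y)[1]$ and that its shift is $\sigma_{\alpha,\beta}$-semistable. Since $E\in\Coh^0_{\alpha,\beta}(Y)=\langle \mathcal{T}^0,\mathcal{F}^0[1]\rangle$, where $(\mathcal{T}^0,\mathcal{F}^0)$ denotes the torsion pair on $\Coh^\beta(Y)$ used to build $\Coh^0_{\alpha,\beta}(Y)$ in Proposition \ref{tiltoftilt}, the object $E$ fits into a canonical short exact sequence
\[ 0\to T\to E\to F[1]\to 0 \]
in $\Coh^0_{\alpha,\beta}(Y)$, with $T=\mathcal{H}^0_{\Coh^\beta}(E)\in\mathcal{T}^0$ and $F=\mathcal{H}^{-1}_{\Coh^\beta}(E)\in\mathcal{F}^0$. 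The assertion $E[-1]\in\Coh^\beta(Y)$ is precisely the vanishing of the torsion part $T$, so this is the crux; the semistability of $E[-1]$ will then follow almost formally.

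First I would pin down the central charge. At $\beta=-\tfrac12$ one computes $\Ch^{-1/2}_{\leq 2}(-w)=(0,-H,0)$, whence $Z^0_{\alpha,\beta}(-w)=-H^3$ is real and negative; in particular $\im Z^0_{\alpha,\beta}(E)=0$. Since $\im Z^0_{\alpha,\beta}\geq 0$ on the heart and is additive along the sequence above, both $\im Z^0_{\alpha,\beta}(T)=0$ and $\im Z^0_{\alpha,\beta}(F[1])=0$, which (recalling $Z^0_{\alpha,\beta}=\tfrac1i Z_{\alpha,\beta}$) translate into $\re Z_{\alpha,\beta}(T)=0$ and $\re Z_{\alpha,\beta}(F)=0$.

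Next I would prove $T=0$. Every $\sigma_{\alpha,\beta}$-Harder--Narasimhan factor $G$ of $T$ has slope $\mu_{\alpha,\beta}(G)>0$, because $T\in\mathcal{T}^0$; together with $\re Z_{\alpha,\beta}(T)=0$ this forces each $G$ to have class in $\ker Z_{\alpha,\beta}$. The support property of $\sigma_{\alpha,\beta}$ (condition (c) in the definition of a weak stability condition, with $Q$ negative definite on $\ker Z_{\alpha,\beta}$) then forces each such class to vanish in $\Lambda^2_H$, so $Z^0_{\alpha,\beta}(T)=0$ and $T$ has maximal $\sigma^0_{\alpha,\beta}$-phase. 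Since $w$ is primitive, a $\sigma^0_{\alpha,\beta}$-semistable object of class $-w$ is in fact stable (as in Remark \ref{rmkstable}), and a stable object has no proper subobject of maximal phase; hence $T=0$ and $E[-1]=F\in\Coh^\beta(Y)$. For the final claim, $F\in\mathcal{F}^0$ has all its $\sigma_{\alpha,\beta}$-HN factors of slope $\leq 0$, while $\re Z_{\alpha,\beta}(F)=0$ forces each of them to have slope exactly $0$; therefore $F=E[-1]$ is $\sigma_{\alpha,\beta}$-semistable.

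The main obstacle is precisely the vanishing of $T$. Because $\sigma^0_{\alpha,\beta}$ only sees $\Ch_{\leq 2}$, it is blind to pieces whose class lies in $\ker Z_{\alpha,\beta}$ (morally, zero-dimensional subobjects), and these are exactly the objects that could a priori sit inside $E$ in $\Coh^0_{\alpha,\beta}(Y)$ without affecting any slope inequality. Ruling them out is what makes both the support property and the upgrade from semistability to genuine stability (via primitivity of $w$) indispensable; everything else in the argument is a direct bookkeeping of real and imaginary parts of the two central charges.
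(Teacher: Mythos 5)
Your opening reduction is partly on track: from $\im Z^0_{\alpha,\beta}(E)=0$ and additivity of the weak stability function you correctly conclude that both pieces of the torsion-pair decomposition have $\re Z_{\alpha,\beta}=0$; the support property then correctly shows that every $\sigma_{\alpha,\beta}$-HN factor of $T$ has vanishing class in $\Lambda^2_H$, so $T$ is a sheaf of finite length; and your closing argument that $F=E[-1]$ is $\sigma_{\alpha,\beta}$-semistable (all HN factors of an object of $\mathcal F^0$ with $\re Z_{\alpha,\beta}=0$ must have slope exactly $0$) is fine, and if anything cleaner than the paper's phrasing of that step. A minor slip: the canonical sequence in the tilted heart goes the other way, $0\to F[1]\to E\to T\to 0$, with $F[1]=\mathcal H^{-1}_{\Coh^\beta}(E)[1]$ the subobject and $T=\mathcal H^0_{\Coh^\beta}(E)$ the quotient; this does not affect the substance.

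The genuine gap is the step that kills $T$: ``since $w$ is primitive, a $\sigma^0_{\alpha,\beta}$-semistable object of class $-w$ is in fact stable, and a stable object has no proper subobject of maximal phase.'' Remark \ref{rmkstable} proves that implication for the stability condition $\sigma$ on $\Ku(Y)$, where $Z^0_{\alpha,\beta}$ restricted to the rank-two lattice $K_{\mathrm{num}}(\Ku(Y))$ is a \emph{genuine} stability function, so no nonzero object of the heart $\sA$ has vanishing charge and a Jordan--H\"older argument applies. For the \emph{weak} stability condition $\sigma^0_{\alpha,\beta}$ on all of $D^b(Y)$ the implication fails for exactly the objects you need to exclude: a nonzero sheaf $T$ of finite length lies in $\Coh^0_{\alpha,\beta}(Y)$ and has \emph{zero} class in $\Lambda^2_H$, hence zero charge, so primitivity of $w$ constrains nothing. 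Worse, since $\im Z^0_{\alpha,\beta}(E)=0$, every subobject and every quotient of $E$ also has slope $+\infty$; hence \emph{every} object of $\Coh^0_{\alpha,\beta}(Y)$ of class $-w$ is automatically $\sigma^0_{\alpha,\beta}$-semistable (the hypothesis carries almost no information), while ``stable'' in the paper's definition would mean $E$ admits no proper quotient whatsoever --- something you cannot deduce from anything at hand. Concretely, an object of the form $A[1]\oplus T$, with $T\neq 0$ of finite length and $A\in\mathcal F^0$ tilt-semistable of class $w+t\,\frac{H^3}{d}$, would be a semistable object of class $-w$ with nonvanishing torsion part, and no lattice-level or formal argument can rule it out, because $T$ is invisible to $\Lambda^2_H$. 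This is precisely why the paper's proof needs a non-formal input at this point: Li's bound $\Ch^{\beta}_3(E')\leq 0$ for tilt-semistable objects (\cite{Li15}), applied to $E'=\mathcal H^{-1}_{\Coh^\beta}(E)$, whose class is $w+t\frac{H^3}{d}$; the inequality reads
\[
\Ch^\beta_3(E')=H^3\left[\frac{1}{24}+\frac{t-1}{d}\right]\leq 0,
\]
which forces $t=0$ and hence $T=0$. Without this (or an equivalent restriction on third Chern characters, which your argument never touches) the vanishing of $T$ is not established, so the proof is incomplete at its central step.
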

\begin{proof}
Since $E$ is in $\Coh^0_{\alpha,\beta}(Y)$, there is a triangle
$$E'[1]\to E\to T$$
with $E'\in \Coh^\beta(Y)_{\mu_{\alpha,\beta}\leq 0}$, $T\in \Coh^\beta(Y)_{\mu_{\alpha,\beta}> 0}$. Since $E$ is semistable with respect to $\mu^0_{\alpha,\beta}$, $Z_{\alpha,\beta}(T)$ has to be $0$, so $T$ is supported on points, that is, $T$ has finite length. 
Also, $E'$ needs to be $\sigma_{\alpha,\beta}$-semistable, because otherwise the destabilizing subobject of $E'$ would make $E$ $\sigma_{\alpha,\beta}^0$-unstable.

We now prove that $T$ must vanish, and conclude.
By a result of Li \cite{Li15}, we have $\Ch^\beta_3(E')\leq 0$. Now, $\Ch(E')=w + t\frac{H^3}{d}$, where $t$ denotes the length of $T$. 
The inequality then reads
\[ \Ch^\beta_3(E')=H^3\left[ \frac{t}{d} + \frac{(d-6)}{6d} -\frac18\right]=H^3\left[ \frac{1}{24} + \frac{t-1}{d} \right]\leq 0 \]
which shows $t=0$, and therefore $T=0$.
\end{proof}

Lemma \ref{connection} and Lemma \ref{prop_semistable_objects2}, together, show that if an object $E\in \Ku(Y)$ is $\sigma$-semistable then it is $\sigma_{\alpha,\beta}$-semistable. Next, we work towards a converse: we classified all $\sigma_{\alpha,\beta}$-semistable objects of class $w$ in Proposition~\ref{prop_tilt}, we check that they belong to $\Ku(Y)$ in Lemma \ref{ep} and Lemma \ref{osd} below.



\begin{lemma} \label{ep}
Let $E_p$ be defined as in \eqref{eq_def_of_Ep}. Then $\Hom^\bullet(\sO_Y(1),E_p)=\Hom^\bullet(\sO_Y,E_p)=~0$. In other words,
the objects $E_p$ belong to $\Ku(Y)$.
\end{lemma}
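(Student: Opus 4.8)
The plan is to apply the functors $\Hom^\bullet(\sO_Y,-)$ and $\Hom^\bullet(\sO_Y(1),-)$ to the defining triangle
\[
\sO_Y(-1)[1] \to E_p \to I_p \xrightarrow{\kappa} \sO_Y(-1)[2]
\]
and read off the two resulting long exact sequences. In each case the argument reduces to understanding $\Hom^\bullet$ of $\sO_Y$ and $\sO_Y(1)$ against the two outer terms $\sO_Y(-1)$ and $I_p$, together with the connecting map induced by the extension class $\kappa\in\Ext^2(I_p,\sO_Y(-1))$ that defines $E_p$.

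For the orthogonality to $\sO_Y$ the computation is immediate. First, $\sO_Y(-1)\simeq K_Y\otimes\sO_Y(1)$, so Kodaira vanishing gives $H^{>0}(Y,\sO_Y(-1))=0$, while $H^0(Y,\sO_Y(-1))=0$ since $\sO_Y(-1)$ has no sections; hence $\Hom^\bullet(\sO_Y,\sO_Y(-1))=0$. Second, applying $H^\bullet(Y,-)$ to $0\to I_p\to\sO_Y\to\C_p\to 0$ and using that $Y$ is Fano (so $H^{>0}(Y,\sO_Y)=0$) together with the fact that the evaluation $H^0(\sO_Y)\to H^0(\C_p)$ is an isomorphism, one obtains $\Hom^\bullet(\sO_Y,I_p)=0$. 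Feeding these two vanishings into the long exact sequence yields $\Hom^\bullet(\sO_Y,E_p)=0$.

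The orthogonality to $\sO_Y(1)$ is the main point, and this is where I expect the real content to lie. Here the two outer $\Hom$-groups no longer vanish: from $0\to I_p(-1)\to\sO_Y(-1)\to\C_p\to 0$ and the vanishing of $H^\bullet(\sO_Y(-1))$ one finds $\Hom^\bullet(\sO_Y(1),I_p)=H^\bullet(I_p(-1))\simeq\C$ concentrated in degree $1$, and by Serre duality $\Hom^\bullet(\sO_Y(1),\sO_Y(-1)[1])=H^{\bullet+1}(K_Y)\simeq\C$ concentrated in degree $2$. Consequently the only nonzero stretch of the long exact sequence is
\[
0 \to \Hom^1(\sO_Y(1),E_p) \to \Hom^1(\sO_Y(1),I_p) \xrightarrow{\kappa_\ast} \Hom^2(\sO_Y(1),\sO_Y(-1)[1]) \to \Hom^2(\sO_Y(1),E_p)\to 0,
\]
so everything comes down to showing that $\kappa_\ast$, i.e.\ post-composition with $\kappa$, is an isomorphism of one-dimensional spaces. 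I would prove this by identifying $\kappa_\ast$ with the Serre duality pairing: the composition
\[
\Ext^1(\sO_Y(1),I_p)\times \Ext^2(I_p,\sO_Y(-1)) \to \Ext^3(\sO_Y(1),\sO_Y(-1))\simeq \C
\]
is perfect because $\sO_Y(-1)\simeq\sO_Y(1)\otimes K_Y$, and since $\kappa$ is by construction the nonzero generator of $\Ext^2(I_p,\sO_Y(-1))$, pairing with it is a nonzero, hence bijective, functional. Therefore $\kappa_\ast$ is an isomorphism and both $\Hom^1(\sO_Y(1),E_p)$ and $\Hom^2(\sO_Y(1),E_p)$ vanish.

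Alternatively, and perhaps more cleanly, I could invoke the description of $E_p$ as the right mutation $\mathbb R_{\sO_Y(-1)}(I_p)$ obtained in the proof of Lemma \ref{lem_Families_IpEp}: by definition this lands in ${}^\perp\langle\sO_Y(-1)\rangle$, so $\Hom^\bullet(E_p,\sO_Y(-1))=0$, and Serre duality $\Hom^\bullet(\sO_Y(1),E_p)\simeq\Hom^{3-\bullet}(E_p,\sO_Y(-1))^\vee$ gives the desired vanishing at once. Either way the crux is the same Serre-duality input; the direct route has the advantage of being self-contained, while the mutation route makes transparent that orthogonality to $\sO_Y(1)$ is forced by the very construction of $E_p$.
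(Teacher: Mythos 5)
Your argument is correct, and it is worth setting against the paper's proof, which is considerably terser. For the orthogonality to $\sO_Y$ the two proofs coincide. For $\sO_Y(1)$, the paper records the computation of $\Hom^\bullet(\sO_Y(1),I_p)$ and then simply asserts that the long exact sequence of the triangle \eqref{eq_def_of_Ep} gives $\Hom^\bullet(\sO_Y(1),E_p)=0$; it never makes explicit the point you correctly isolate as the crux, namely that the vanishing holds only because the connecting map $\kappa_\ast\colon \Ext^1(\sO_Y(1),I_p)\to \Hom^2(\sO_Y(1),\sO_Y(-1)[1])$, i.e.\ post-composition with the extension class defining $E_p$, is an isomorphism of one-dimensional spaces. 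Your justification of this step via the perfectness of the Yoneda pairing $\Ext^1(\sO_Y(1),I_p)\times\Ext^2(I_p,\sO_Y(-1))\to\Ext^3(\sO_Y(1),\sO_Y(-1))\simeq\C$ (using $\sO_Y(-1)\simeq \sO_Y(1)\otimes K_Y$ and $\kappa\neq 0$) is exactly the right mechanism, and it supplies the detail the paper leaves implicit. Your alternative route is also valid and arguably the most transparent: the paper itself identifies $E_p\simeq \mathbb R_{\sO_Y(-1)}(I_p)$ (see Lemma \ref{lem_Families_IpEp} and the proof of Proposition \ref{prop_GiesekerVSTilt}), so $\Hom^\bullet(E_p,\sO_Y(-1))=0$ holds by construction of the mutation, and Serre duality $\Hom^\bullet(\sO_Y(1),E_p)\simeq\Hom^{3-\bullet}(E_p,\sO_Y(-1))^\vee$ finishes in one line; this shows the lemma is forced by the construction rather than by a numerical accident, at the cost of quoting the mutation formalism. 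One discrepancy worth flagging: your computation that $\Hom^\bullet(\sO_Y(1),I_p)$ is one-dimensional and concentrated in degree $1$ is the correct one (it is Serre-dual to $\Ext^2(I_p,\sO_Y(-1))\simeq\C$, the space in which $\kappa$ lives); the paper's proof writes $\Hom^\bullet(\sO_Y(1),I_p)\simeq\C[2]$, which appears to be a typo in the degree.
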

\begin{proof}From the short exact sequence
\[
0 \to I_p\to \sO_Y \to \C_p \to 0,
\]
it is straightforward to see that $\Hom^\bullet(\sO_Y,I_p)=0$, and we already pointed out that $\sO_Y(-1)$ has no cohomology, so that $\Hom^\bullet(\sO_Y,E_p)=0$ using the defining sequence \eqref{eq_def_of_Ep}.

On the other hand, from the sequence above we have $\Hom^\bullet(\sO(1),I_p)\simeq \C[2]$, and again by applying $\Hom(\sO_Y(1),-)$ to the distinguished triangle \eqref{eq_def_of_Ep}  we see that $\Hom^\bullet(\sO_Y(1),E_p)=0$.
\end{proof}

\begin{lemma} \label{osd}
Let $S$ be a hyperplane section of $Y$, and $D$ a root of $S$. Then $$\Hom^\bullet(\sO_Y(1), \iota_*(\sO_S(D))=\Hom^\bullet(\sO_Y, \iota_*(\sO_S(D))=0.$$ In other words, $\iota_*\sO_S(D)\in \Ku(Y)$.
\end{lemma}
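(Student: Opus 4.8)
The plan is to reduce both vanishings to cohomology computations on the surface $S$ and then play off the non-effectivity of roots against Serre duality. First I would use adjunction: since $\iota$ is a closed immersion, $\iota_*$ is exact and the projection formula gives, for $k\in\set{0,1}$,
\[
\Hom^\bullet(\sO_Y(k),\iota_*\sO_S(D))\simeq H^\bullet(S,\sO_S(D-kH_{|S})).
\]
Because $H_{|S}=-K_S$ (Lemma \ref{lem_hyperplane_sections}(1)), the case $k=1$ concerns $\sO_S(D+K_S)$. Thus it suffices to prove $H^\bullet(S,\sO_S(D))=0$ and $H^\bullet(S,\sO_S(D+K_S))=0$, which I would do degree by degree.

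For the degree $0$ and degree $2$ groups I would combine Serre duality on the Gorenstein surface $S$ with the absence of effective roots. Since $D$ and $-D$ are both roots, neither is effective (Lemma \ref{lem_hyperplane_sections}(3)), so $H^0(\sO_S(D))=0$ and, by Serre duality, $H^2(\sO_S(D+K_S))=H^0(\sO_S(-D))^\vee=0$. The other two groups vanish for a cruder reason: $(K_S-D).H_{|S}=-d<0$ and $(D+K_S).H_{|S}=-d<0$, so $K_S-D$ and $D+K_S$ have negative degree against the ample class $H_{|S}$ and cannot be effective; hence $H^2(\sO_S(D))=H^0(\sO_S(K_S-D))^\vee=0$ and $H^0(\sO_S(D+K_S))=0$.

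It then remains to force $H^1=0$ in both cases by showing the Euler characteristics vanish. By Riemann--Roch, $\chi(\sO_S(D))=1+\tfrac12(D^2-D.K_S)+\delta_S(D)=\delta_S(D)$, while $\chi(\sO_S(D+K_S))=\chi(\sO_S(-D))=\delta_S(-D)$ by Serre duality. The main (and only non-formal) point is the vanishing of the correction terms $\delta_S(D)$ and $\delta_S(-D)$ when $S$ is singular. I expect this to be the real obstacle, but it is exactly what the proof of Proposition \ref{gieseker} already supplies: the parity argument on the crepant resolution shows that any root satisfies $\Ch_2(\sO_S(D))=\tfrac{D^2}{2}=-1$, i.e.\ $\delta_S(D)=0$, and $-D$ is again a root. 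For smooth $S$ the correction terms vanish by ordinary Riemann--Roch and the conclusion is immediate. Combining the three steps gives $H^0=H^1=H^2=0$ for both sheaves, which is the assertion $\iota_*\sO_S(D)\in\Ku(Y)$.
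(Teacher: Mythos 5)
Your proof has the same skeleton as the paper's: reduce by adjunction to cohomology on $S$, kill $H^0$ and $H^2$ using non-effectivity of the relevant divisors (Lemma \ref{lem_hyperplane_sections}) together with Serre duality on the Gorenstein surface $S$, and then get $H^1=0$ from vanishing Euler characteristics. For smooth $S$ (and for cones, where the statement is vacuous since there are no roots) your argument is complete and correct. The divergence is in how $\chi=0$ is justified when $S$ has rational double points, and that is where there is a gap.

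The paper obtains $\chi(S,\sO_S(D))=\chi(S,\sO_S(D-H))=0$ from the Euler-pairing identities $\chi([\sO_Y],w)=\chi([\sO_Y(1)],w)=0$, which presupposes $[\iota_*\sO_S(D)]=w$. You instead compute $\chi$ by Riemann--Roch on $S$, reducing everything to $\delta_S(D)=\delta_S(-D)=0$, and you claim this is ``exactly what the proof of Proposition \ref{gieseker} already supplies.'' It is not: that argument runs in the opposite direction. There one \emph{assumes} $\Ch_2(\sO_S(D))=\tfrac{D^2}{2}+\delta_S(D)=-1$ (the class-$w$ condition) and uses parity to exclude the alternative $(D^2,\delta_S(D))=(-1,-\tfrac12)$, concluding that $D$ is a root. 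You need the converse: from $D^2=-2$ and $D.K_S=0$ alone, deduce $\delta_S(D)=0$. Running the same tools in your direction, crepancy gives $\bar D.K_{\tilde S}=0$ for any integral lift $\bar D$ of $D$ to the minimal resolution, hence $\bar D^2$ is even; negative definiteness of the exceptional lattice gives $-2=D^2=\bar D^2+N$ with $N\geq 0$, where $N=0$ exactly when (for a suitable lift) $\bar D$ is orthogonal to all exceptional curves, i.e.\ $D$ is Cartier at the singular points and $\delta_S(D)=0$. Parity only tells you that $N$ is a non-negative even integer; it does not exclude $N\geq 2$, i.e.\ $\bar D^2\leq -4$ with $D$ non-Cartier and $\delta_S(D)<0$. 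Ruling this out needs an extra argument (anisotropy of the relevant discriminant classes, or a case check on the singularity configurations actually occurring on hyperplane sections of a general $Y$), which neither your citation nor the parity computation provides. To be fair, the very same fact --- that every root pushforward has $\Ch_2(\sO_S(D))=-1$, hence class $w$ --- is what the paper silently assumes when it pairs against $w$; so you have correctly isolated the one non-formal point of the lemma, but the appeal to Proposition \ref{gieseker} does not close it.
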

\begin{proof}
By adjunction of the functors $\iota_*$ and $\iota^*$, it's enough to check the required vanishing on $S$.
Let $H$ be the pullback of the ample divisor on $Y$ to $S$: we get $\Hom^\bullet(\sO_Y(1), \iota_*(\sO_S(D))\simeq H^\bullet(S,\sO_S(D-H))$ and $\Hom^\bullet(\sO_Y, \iota_*(\sO_S(D))\simeq H^\bullet(S,\sO_S(D))$.
The adjunction isomorphisms above, and the condition $\chi([\sO_Y],w)=\chi([\sO_Y(1)],w)=0$, imply that $\chi(S,\sO_S(D))=\chi(S,\sO_S(D-H))=0$. Since $D$ and $D-H$ are not effective (see Lemma \ref{lem_hyperplane_sections}), we have $H^0(D)=H^0(D-H)=0$, and by Serre duality it follows that $H^2(D)=H^2(D-H)=0$. Then $H^1(D)=H^1(D-H)=0$ must be $0$ as well, and the claim is proven.
\end{proof}

We are now ready to prove Proposition \ref{kuzistilt} and lastly Theorem \ref{introduction_1}:

\begin{proof}[Proof of Proposition \ref{kuzistilt}]
The proposition follows from the fact that the two moduli spaces $\sM_-(w)$ and $\sM_\sigma(w)$ corepresent isomorphic functors. Observe that stability and semistability coincide for both $\sigma$ and $\sigma_{\alpha,\beta}$ and class $w$. 

Then, for all $E\in D(Y)$, we have that $E$ is a $\sigma$-stable object of $\sA$ if and only if $E[-1]\in\Coh^\beta(Y)$ and $E$ is $\sigma_{\alpha,\beta}$-stable: the forward direction follows from Lemma \ref{connection} and Lemma \ref{prop_semistable_objects2}. For the converse, observe that $\sigma_{\alpha,\beta}$-stable objects have been classified in Proposition \ref{prop_tilt}, and all belong to $\sA$ (after a shift) by Lemma \ref{ep} and Lemma \ref{osd}. Therefore, they must be stable by Remark \ref{rmkstable}.
\end{proof}

We recollect here the results of Section \ref{sec_construction_of_moduli}:

\begin{thm}\label{thm_Section2}
The moduli space $\sM_\sigma(w)$ is proper. There is a subvariety $\sY \subset \sM_\sigma(w)$, isomorphic to $Y$, parametrizing objects $E_p$ fitting in a distinguished triangle \eqref{eq_def_of_Ep}. The complement $\sM_\sigma(w)\setminus \sY$ is smooth of dimension $d+1$ and parametrizes roots of $Y$.
\end{thm}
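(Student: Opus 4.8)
The plan is to assemble the statement from the results already established in this section, since each of its three clauses corresponds to a proposition proven above; the content is purely in checking that the descriptions glue correctly along the relevant isomorphisms.

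First I would address properness. Because $Z^0_{\alpha,\beta}(w)=1$ and the class $w$ is primitive in $\Ku(Y)$, every $\sigma$-semistable object of class $w$ is $\sigma$-stable by Remark \ref{rmkstable}, and the same holds for $\sigma_{\alpha,\beta}$-semistable objects by the classification in Proposition \ref{prop_tilt}. As recalled in \ref{sec_properties_moduli}, the moduli stack $\sM_{\alpha,\beta}(w)$ is proper precisely when semistability and stability coincide, so $\sM_-(w)$ is proper. Transporting this along the isomorphism $\sM_\sigma(w)\simeq \sM_-(w)$ of Proposition \ref{kuzistilt} yields properness of $\sM_\sigma(w)$.

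Next I would produce the subvariety $\sY$. By Proposition \ref{kuzistilt} it suffices to exhibit it inside $\sM_-(w)$, and this is exactly the locus $\sY$ of Proposition \ref{prop_GiesekerVSTilt}, which parametrizes objects $E_p$ fitting in the triangle \eqref{eq_def_of_Ep}. Lemma \ref{lem_Families_IpEp} identifies any moduli space for the functor of families of $E_p$'s with $Y$ itself, so $\sY\simeq Y$. For the complement, Proposition \ref{prop_GiesekerVSTilt} shows that the wall-crossing morphism $s\colon \sM_G(w)\to \sM_-(w)$ restricts to an isomorphism on $\sM_G(w)\setminus \mathcal P$, carrying it onto $\sM_-(w)\setminus\sY$, and that this complement parametrizes roots of $Y$. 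To obtain smoothness and the dimension count I would invoke Proposition \ref{gieseker}: the second component of $\sM_G(w)$ has dimension $d+1$, is smooth away from its intersection with $\mathcal P$, and that intersection is contained in $\mathcal P$; hence $\sM_G(w)\setminus \mathcal P$ coincides with the smooth locus of that component. Concretely, the smoothness is the computation $\Ext^1(\iota_*\sO_S(D),\iota_*\sO_S(D))=\C^{d+1}$ together with $\Ext^2=\Ext^3=0$ of Lemma \ref{dimensiond+1}, combined with the tangent-space identification \eqref{eq_Tangent_M_G} and the resulting vanishing of obstructions.

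Since the theorem is a recollection, there is no single hard step, but the point requiring care is verifying that $\sM_-(w)\setminus\sY$ is genuinely the smooth locus: one must confirm that the singular locus of the roots-component, namely its intersection with $\mathcal P$ (the sheaves $I_{p|S}$ with $S$ singular at $p$), is entirely absorbed into $\mathcal P$ and hence collapses into $\sY$ under $s$, so that nothing singular survives in the complement. This is exactly what the intersection description of Proposition \ref{gieseker} guarantees, and it is the only place where the geometry, rather than pure bookkeeping, enters.
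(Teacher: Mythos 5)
Your proposal is correct and follows essentially the same route as the paper: the theorem is proved there exactly as a recollection of Proposition \ref{prop_tilt}, Proposition \ref{prop_GiesekerVSTilt}, and the isomorphism $\sM_\sigma(w)\simeq \sM_-(w)$ of Proposition \ref{kuzistilt}, with properness inherited from $\sM_-(w)$ as in Section \ref{sec_properties_moduli} (the coincidence of stability and semistability for class $w$ being noted in the proof of Proposition \ref{kuzistilt}). Your added care about the singular points $I_{p|S}$ being absorbed into $\mathcal P$ and collapsing into $\sY$, with smoothness of the complement coming from Lemma \ref{dimensiond+1} and \eqref{eq_Tangent_M_G}, is precisely the content the paper's citations encode.
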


\begin{proof}
The statement is a combination of the description of $\sM_-(w)$ (Proposition \ref{prop_tilt}, Proposition \ref{prop_GiesekerVSTilt}), and the isomorphism $\sM_\sigma(w)\simeq \sM_-(w)$ (Proposition \ref{kuzistilt}). Properness follows from properness of $\sM_-(w)$ (see the beginning of this section).
\end{proof}


\subsection{The Abel--Jacobi map on $\sM_\sigma(w)$}
\label{sec_AJ_map}
Here we recall some facts about the intermediate Jacobian $J(Y)$ of a Fano threefold $Y$ and its associated Abel--Jacobi map (we refer the reader to \cite[Section~12.1]{Voi03_volumeI} for more details), with particular focus to the case of curves on threefolds. We also recall the \textit{tangent bundle sequence (TBS)} technique developed in \cite[Section~2]{Wel81} to study the infinitesimal behavior of the AJ map. 

The \emph{intermediate Jacobian} of $Y$ is the complex torus 
\[
J(Y) \coloneqq \bigslant{H^1(Y, \Omega^2_Y)^{\vee}}{H^3(Y,\Z)}
\]
and it comes equipped with an \emph{Abel--Jacobi} map $\Phi \colon\mathcal{Z}_1(Y)^{\text{hom}} \longrightarrow  J(Y)$, where $\mathcal{Z}_1(Y)^{\text{hom}}$ is the space of 1-dimensional algebraic cycles on $Y$ which are homologous to 0, defined as follows. Every $Z\in \mathcal{Z}_1(Y)^{\text{hom}}$ admits a 3-chain $\Gamma$ in $Y$ such that $\partial \Gamma=Z$. Then one has the linear map on $H^{3,0}(Y)\oplus H^{2,1}(Y)$ given by integration against $\Gamma$, whose image in $J(Y)$ (projecting onto the second factor) is well-defined and defines the map 
\[ \Phi(Z) \coloneqq  \int_\Gamma. \]

Now suppose $T$ is a smooth variety parametrizing algebraic 1-cycles $\{Z_t\}_{t \in T}$ on $Y$. Picking a base point $0\in T$ yields a map $T\to \mathcal{Z}_1(Y)^{\text{hom}}$, which one composes with $\Phi$ to define an analogous map (see \cite[Section~2]{Wel81}):
\[
\Psi \colon \xymatrix@R-2pc{T \ar[r] & J(Y) \\ Z_t \ar@{|->}[r] & \Phi(Z_t-Z_0).}
\]

We will apply this construction to families of complexes of sheaves, using the (Poincar\'e dual of) the second Chern class $c_2$: to every point in $\sM_\sigma(w)$, parametrizing an object $F\in \Ku(Y)$, we associate an algebraic 1-cycle $c_2(F)$. Abusing notation, we identify points of $\sM_\sigma(w)$ with the objects they parametrize, and pick a base point $F_0$. Then, we get a map from $\sM_\sigma(w)$ to the intermediate Jacobian:
\[
\Psi \colon \xymatrix@R-2pc{\mathcal{M}_\sigma(w) \ar[r] & J(Y) \\ F \ar@{|->}[r] & \Phi(c_2(F)-c_2(F_0)).}
\]

Finally, we recall the TBS technique of \cite[Section~2]{Wel81} for the infinitesimal study of $\Psi$. Suppose $Y$ is embedded in a quasi-projective fourfold $W$. Let $Z$ be a smooth curve on $Y$, and $0\in T$ a smooth family of deformations of $Z=Z_0$, so that $\Psi$ is defined on $T$. Then the dual map of the differential of $\Psi$ at $0$, denoted $\psi_Z^\vee$, appears in the following diagram:
\begin{equation}\label{eq_diagram_AJ_codiff}
\begin{tikzcd}[]
H^0(N_{Y/W}\otimes K_Y) \rar["R"] \ar["\mathrm{res}"]{d} & H^1(\Omega^2_Y) \rar \ar["\psi_Z^\vee"]{d} & 0\\
H^0((N_{Y/W}\otimes K_Y)_{|Z}) \ar["\beta_Z"]{r} & H^1(N_{Z/Y} \otimes K_Y) \rar & H^1(N_{Z/W} \otimes K_Y).
\end{tikzcd}
\end{equation}
Here, $\mathrm{res}$ is the restriction map, and the rows are part of long exact sequences associated, respectively, with the short exact sequence
\[ \Omega_Y^2 \otimes N_{Y/W}^\vee \to \Omega_W^3 \otimes \sO_Y \to \Omega_Y^3, \]
(\cite[2.6]{Wel81}) and with the normal bundle sequence of $Z\subset Y \subset W$. We use the isomorphism $H^1(N_Z Y \otimes K_Y)\simeq H^0(N_Z Y)^\vee$ and the Kodaira-Spencer isomorphism $H^0(N_Z Y)^\vee \simeq \Omega_{T,0}$.

\begin{remark}
\label{rmk_YContractedByAJ}
The map $\Psi$ contracts the locus $\sY\subset \sM_\sigma(w)$, for every $d$. One can show this in two different ways: on the one hand, the objects $E_p$ all have the same second Chern class, so $\Psi$ is constant on the set $\sY$ parametrizing them. Alternatively, the Abel--Jacobi map contracts rational curves since it maps to an abelian variety, and $Y$ (and hence $\sY$) is rationally connected for all degrees.
\end{remark}

\section{Case-by-case description of $\sM_\sigma(w)$}\label{sec_higher_degrees}
\label{sec_d=2}

In this section we illustrate Theorem \ref{thm_Section2} on a case-by-case basis. For degrees $d=3,4,5$ we recover known descriptions of $\sM_\sigma(w)$ and of the Abel--Jacobi map. We follow the notation given in Section \ref{sec_Fano_threefolds}.

\subsection{Degree 5}


We give the alternative description of $V_5$ from \cite{Fae05} and recall the properties of the quiver moduli space from \cite{Dre88}.

Consider a three-dimensional vector space $B$ and the projective spaces $\pr 2= \mathbb P(B)$ and $\Check{\mathbb{P}}^2=\mathbb{P}(B^\vee)$. Fix a smooth conic $F\in \Sym^2 (B)$.
Then, viewing $F$ as a function $F\colon \Sym^2(B^\vee)\to \C$, one sees as in \cite[Lemma 2.3]{Fae05} that the variety $V_5$ is isomorphic to
\begin{equation}\label{eq_locus_in_hilb}
     \left\lbrace Z\in \Hilb_3(\Check{\mathbb{P}}^2) \mid H^0(I_Z(2))\subset \ker F\right\rbrace \subset \Hilb_3(\Check{\mathbb{P}}^2), 
\end{equation} 
and that there is a natural isomorphism $B\simeq \Hom(S,Q^\vee)$.

On the other hand, the universal bundles 
 have classes
\[ [S]=2-H +\frac{H^2}{10} +\frac{H^3}{30} \qquad\qquad [Q^\vee]=3-H-\frac{H^2}{10}+\frac{H^3}{30} \]
in $K_{num}(\Ku(Y))$ (see \cite[Section~3]{Fae05}), so that $w=2[Q^\vee]-3[S]$. In other words, $\sM_\sigma(w)$ is identified with the moduli space of quiver representations of the form $S^{\oplus 3}\rightarrow (Q^{\vee})^{\oplus 2}$.

In turn, such a map is a point of the variety $\mathbb G (2\times 3,B^\vee)$ of $2\times 3$ matrices on the space $B^\vee$, \textit{i.e.}
\[\sM_\sigma(w) \simeq \mathbb G (2\times 3,B^\vee)= \mathbb P(M_{2,3}(B^\vee))// \SL(2)\times \SL(3).\]

The variety $\mathbb{G}(2\times 3,B^\vee)$ has been studied in detail in \cite{Dre88}. The morphism 
\begin{align*}
 \Hilb_3(\Check{\mathbb{P}}^2)  &\longrightarrow  \Gr(3,6)= \Gr(3,\Sym^2(B))\\
      \phantom{aaa} Z \phantom{aaa} &\longmapsto  H^0(I_Z(2))
\end{align*}
factors through
\[ q\colon \Hilb_3(\Check{\mathbb{P}}^2) \to \mathbb G (2\times 3,B^\vee), \]
defined by sending a subscheme $Z\subset \Check{\mathbb P}^2$ to the map between the first order syzygies and the ideal generators of $I_Z(2)$. The variety $\mathbb G (2\times 3,B^\vee)$ is a birational model of $\Hilb_3(\Check{\mathbb{P}}^2)$, it compactifies the locus of reduced subschemes, and $q$ contracts the locus of collinear schemes to a plane in $\mathbb{G}(2\times 3, B^\vee)$ \cite[Theorem~4]{Dre88}. Moreover, $q$ is an isomorphism on the set \eqref{eq_locus_in_hilb} \cite[Lemma 2.3]{Fae05}. 

In summary, $V_5$ can be described in $\Gr(3,\Sym^2(B))$ as the intersection of the moduli space $\sM_\sigma(w)=\mathbb{G}(2\times 3, B^\vee)$ with the cell $\Gr(2,\Sym^2(B)/\pair{F})$ of subspaces containing $F$ (since this condition is the one appearing in \eqref{eq_locus_in_hilb}):
\[V_5=\sM_\sigma(w)\cap \Gr(2,5)\subset \Gr(3,6).\]

The intermediate Jacobian of $V_5$ is a point, so there is not much interesting to say about the Abel--Jacobi map \cite[Table 12.2]{AGV_Fano}. 

\subsection{Degree 4}

By construction, the functor $\phi$ sends points of $C$ to (twists of) spinor bundles on $X$. The numerical class in $K_{num}(Y)$ of both spinor bundles is $[S_\pm]=2+H-\frac{H^3}{12}$ \cite[Remark 2.9]{Ott88}. This implies \[[S(-1)]=2v-w.\]

On the other hand, ideal sheaves of lines of $Y$ are images of line bundles on $C$: suppose in fact that $\phi(M)=I_\ell$ for some $M\in \Ku(Y)$ and some line $\ell\subset Y$. Then we have $\Hom(M,M)\simeq \Hom(I_\ell,I_\ell)=\C$, and by Riemann-Roch
\[ -1 = \chi_Y(v,v)=\chi_C(M,M)=-\rk(M)^2, \]
therefore $M$ is a line bundle. Indeed, it is a known fact that $\phi$ induces an isomorphism $J(C) \simeq F(Y)$ between the Jacobian of $C$ and the Fano surface of $Y$ \cite[Theorem~4.8]{Rei72}. 

This shows that the class $w$ corresponds to the class of a rank 2 vector bundle on $C$ of fixed odd degree (we can assume that its degree is 1). Since there is a unique notion of stability on a curve, $\sigma$ must pull-back to slope-stability, and the functor $\phi$ induces an isomorphism between $\sM_\sigma(w)$ and $M_C(2,1)$, the moduli space of slope-stable rank 2 vector bundles of degree 1 on $C$.  Therefore, $M_\sigma(w)$ is an irreducible smooth projective variety of dimension five. As a consequence of \cite[Theorem 4.14(c')]{Rei72}, the intermediate Jacobian $J(Y)$ is isomorphic to the Jacobian of $C$, and under this isomorphism the Abel--Jacobi map coincides with the determinant:
\begin{center}
\begin{tikzcd}[]
M_C(2,1)\dar["\det"] \rar["\sim"] & M_\sigma(w)\dar["\Psi"]\\
\Pic^{1}(C) \rar["\sim"] &J(Y)
\end{tikzcd}
\end{center}
Moreover, fibers of $\det$ (and hence of $\Psi$) are isomorphic to $Y_4$ \cite{new68}.

\subsection{Degree 3}\label{ssec_deg3}

We refer the reader interested in results on smooth cubic threefolds to \cite{Bayer}. Here, the authors show that 
$\sM_\sigma(w)$ is isomorphic to the moduli space $M_G(\kappa)$ of Gieseker-stable sheaves with $\kappa=\left(3,-H,-\frac 12 H^2, -\frac 16 H^3\right)$. The space $M_G(\kappa)$ is smooth and irreducible of dimension 4. $M_G(\kappa)$ maps birationally to the intermediate Jacobian of $Y_3$ and its image is a theta divisor $\Theta$. More precisely, $M_G(\kappa)\to \Theta$ is the blow-up of $\Theta$ in its unique singular point, and the exceptional divisor is the variety $Y_3\subset \sM_\sigma(w)\simeq M_G(\kappa)$ \cite[Theorem 7.1]{Bayer}. 

We also mention the related work \cite{ZYil20}, which compares $\sM_\sigma(w)$ to the Hilbert scheme of skew lines in $Y$.

\begin{remark}[Projectivity of $\sM_\sigma(w)$ for $d\geq 3$]\label{rmk_projectivity345}
We point out that $\sM_\sigma(w)$ is projective for degrees $d\geq 3$. This follows from the explicit descriptions given in these sections: in the case $d=5$, $\sM_\sigma(w)$ is projective since it is a moduli space of quiver representation, and for $d=3,4$ it is projective since it coincides with a space of Gieseker-stable sheaves. 

Alternatively, in all these cases one can apply the criterion \cite[Corollary~1.3]{villalobos2021moishezon}, which together with the Bayer-Macr\'i positivity Lemma \cite[Theorem~4.1]{BM14} implies projectivity of $\sM_\sigma(w)$. Note however that the criterion does not apply in a straightforward way to the cases $d=1,2$, since $\sM_\sigma(w)$ is not normal in those cases. 
\end{remark}


\subsection{Degree 2}
\label{sec_generalitiesY2}
\label{sec_d=2moduli_space}

Recall from Proposition \ref{gieseker} that $\sM_H(w)$ has two irreducible components; one, $\mathcal P$, is a rank 2 projective bundle over $Y$ parametrizing $I_{p|S}$. The complement of $\mathcal P$ is smooth and parametrizes sheaves $\iota_*\sO_S(D)$ associated to roots on hyperplane sections. Points in the intersection are $I_{p|S}$ with $p$ singular in $S$. As illustrated in Section \ref{sec_Fano_threefolds}, $p$ is the singular point of some hyperplane section if and only if $p\in R$, the ramification locus of the double cover $\pi\colon Y\to \pr 3$.
We restate Theorem \ref{thm_Section2} in the case of a general quartic double solid: 

\begin{thm}\label{thm_moduliY2}
Suppose $Y$ is a general quartic double solid. The space $\mathcal{M}_\sigma(w)$ has two irreducible components $\sY$ and $\sC$, both of dimension 3. The component $\mathcal{Y}$ is isomorphic to $Y$, and intersects $\sC$ exactly at the ramification locus $R$. Both components are smooth outside $R$.
\end{thm}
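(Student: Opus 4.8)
The plan is to assemble the statement from the global results of Section~\ref{sec_construction_of_moduli} together with the infinitesimal computation of Lemma~\ref{lem_numerics}, specializing everything to $d=2$.

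First I would produce the two components. By Proposition~\ref{gieseker}, $\sM_G(w)=\mathcal P\cup\sN$, where $\mathcal P$ is the projective bundle over $Y$ parametrizing the $I_{p|S}$ and $\sN$ is the irreducible component of dimension $d+1=3$ whose generic point is a root $\iota_*\sO_S(D)$. The wall-crossing morphism $s\colon\sM_G(w)\to\sM_-(w)\simeq\sM_\sigma(w)$ of Proposition~\ref{prop_GiesekerVSTilt} (using $\sM_-(w)\simeq\sM_\sigma(w)$ from Proposition~\ref{kuzistilt}) is surjective, contracts $\mathcal P$ onto $\sY$ via $[I_{p|S}]\mapsto[E_p]$, and is an isomorphism on $\sM_G(w)\setminus\mathcal P$. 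Setting $\sC\coloneqq s(\sN)$, I get an irreducible $3$-dimensional subvariety: it is the image of the irreducible $3$-fold $\sN$, on the dense open $\sN\setminus(\sN\cap\mathcal P)$ of which $s$ is injective, so $\dim\sC=3$. Surjectivity of $s$ then gives $\sM_\sigma(w)=s(\mathcal P)\cup s(\sN)=\sY\cup\sC$. Since the generic object of $\sY$ is some $E_p$ and that of $\sC$ a root, the two loci are distinct, hence they are exactly the two irreducible components, each of dimension $3$.

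Next I would pin down $\sY\cap\sC$. For the inclusion $\sY\cap\sC\subseteq R$: by Lemma~\ref{lem_numerics}, combined with the observation in Section~\ref{sec_generalitiesY2} that $p$ is the singular point of some hyperplane section if and only if $p\in R$, one has $\Ext^1(E_p,E_p)=\C^3$ exactly when $p\notin R$. Identifying the Zariski tangent space of $\sM_\sigma(w)$ at $[E_p]$ with $\Ext^1(E_p,E_p)$ (as recalled before Lemma~\ref{lem_numerics}), this makes $[E_p]$ a smooth point of $\sM_\sigma(w)$ of dimension $3=\dim\sY$ for $p\notin R$. A smooth point lies on a unique irreducible component, and $[E_p]\in\sY$ with $\sY\neq\sC$, so $[E_p]\notin\sC$ for $p\notin R$; hence $\sY\cap\sC\subseteq R$. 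For the reverse inclusion $R\subseteq\sY\cap\sC$: given $p\in R$, choose a hyperplane section $S_0$ singular at $p$ (the pullback of the tangent plane to $R$). By Example~\ref{explicitfamily}, $I_{p|S_0}$ is a flat limit of roots, so $[I_{p|S_0}]\in\sN$; since also $I_{p|S_0}\in\mathcal P$, we get $s([I_{p|S_0}])=[E_p]\in s(\sN)=\sC$. Thus $[E_p]\in\sY\cap\sC$ for every $p\in R$, and altogether $\sY\cap\sC=R$.

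Finally, smoothness outside $R$ follows at once: $\sY\simeq Y$ is smooth, and $\sC\setminus R=\sC\setminus(\sC\cap\sY)=\sM_\sigma(w)\setminus\sY$, which is smooth of dimension $3$ by Theorem~\ref{thm_Section2}. I expect the main obstacle to be the reverse inclusion $R\subseteq\sY\cap\sC$: the rest is bookkeeping of earlier results, whereas showing that $E_p$ genuinely arises as a degeneration of roots relies on the explicit family of Example~\ref{explicitfamily} together with the compatibility of the contraction $s$ on $\mathcal P\cap\sN$, namely that the flat limit $I_{p|S_0}$ lies simultaneously in $\sN$ (so its image lands in $\sC$) and in $\mathcal P$ (so its image is $[E_p]\in\sY$). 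A secondary point to check carefully is that, for general $Y$, every singular hyperplane section has rational singularities (Section~\ref{sec_generalitiesY2}), so that Example~\ref{explicitfamily} applies uniformly at all points of $R$.
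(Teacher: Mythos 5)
Your proposal is correct and follows essentially the same route as the paper's proof: the paper likewise obtains the component structure and smoothness from Theorem \ref{thm_Section2} together with the wall-crossing morphism of Propositions \ref{prop_GiesekerVSTilt} and \ref{kuzistilt}, uses Lemma \ref{lem_numerics} (the tangent-space computation $\Ext^1(E_p,E_p)\simeq\C^3$ for $p\notin R$) to see that $Y\setminus R\hookrightarrow\sM_\sigma(w)$ is an open immersion into a component, and identifies $\sY\cap\sC$ with $R$ as the image under wall-crossing of the intersection $\mathcal P\cap\sN$ of the Gieseker components, which by Proposition \ref{gieseker} (via Example \ref{explicitfamily}) consists of the $[I_{p|S}]$ with $p$ singular in $S$. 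Your write-up merely makes explicit the two inclusions that the paper's terser argument leaves implicit.
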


\begin{proof}[Proof of Theorem \ref{thm_moduliY2}]
The statements about the component $\sC$ are already in Theorem \ref{thm_Section2}. In this case, Lemma \ref{lem_numerics} implies that the inclusion $Y \setminus R \hookrightarrow \sM_\sigma(w)$ is an open immersion into a component of $\sM_\sigma(w)$ (since the tangent space at a point $p\in Y\setminus R$ is naturally isomorphic to the tangent space of its image). 

As for the intersection, recall that the components of $\sM_H(w)$ intersect at the points $[I_{p|S}]$, with $p\in S\cap R$ singular in $S$. The wall-crossing morphism $p\colon \sM_H(w)\to \sM_-(w)\simeq \sM_\sigma(w)$ corresponds to the assignment $[I_{p|S}]\to p$, and hence it maps the intersection of the components onto the image of $R$ via $R\subset Y\simeq \mathcal Y$ (see Proposition \ref{prop_GiesekerVSTilt}, Proposition \ref{kuzistilt}). \end{proof}

\subsubsection{Conics on $Y$ and the Abel-Jacobi map}\label{sec_AJY2}


The following construction expresses a relation between $\sC$ and a family of conics on $Y$. Here, $Y$ is any smooth Fano threefold of index 2 and degree 2.


A \textit{conic} on $Y$ is, by definition, a curve $Z\subset Y$ whose image $\pi_{|Z}$ is an isomorphism onto a conic of $\pr 3$. Consider the Hilbert scheme of conics on $Y$, and let $\sC'$ be the component of the Hilbert scheme containing smooth conics (note that since $H$ is not very ample, there may be unusual subschemes of $Y$ with Hilbert polynomial $1+2t$). 

Let $Z\in\sC'$ be a conic;  its image $\pi(Z)$ spans a hyperplane of $\pr 3$ and hence it determines a  hyperplane section $S\subset Y$. Suppose $S$ is smooth. Then, $Z-H_{|S}$ is a root of $S$ since $(Z-H_{|S})\cdot H_{|S}=0$ and $(Z-H_{|S})^2=-2$, so the push forward of the line bundle $\sO_S(Z-H_{|S})$ to $Y$ is in the Gieseker moduli space $\sM_H(w)$, as in Proposition~\ref{gieseker}.
Conversely, given a root $D$ on $S$, the divisor class $D+H_{|S}$ has the Hilbert polynomial of a conic. By Riemann-Roch Theorem and Lemma \ref{lem_KodairaVan}  we see $h^0(\sO_S(D+H_{|S}))=\chi(\sO_S(D+H_{|S}))=2$, and hence $|D+H_{|S}|$ is in a pencil whose general element is a smooth conic. 

Therefore, the assignment $Z\mapsto \iota_*(\sO_S(Z-H_{|S}))$ defines a rational map 
\[ f\colon \sC' \dashrightarrow \sC \]
whose fibers are pencils of rationally equivalent conics 
and which is dominant, since the open locus of $\iota_*\sO_S(D)$ where $D$ is a root of a smooth $S$ is in the image of $f$.
Consider the subscheme of $\sC'$ given by $\sC'_0=\{Z\in \sC' \,|\,Z\subset S\text{ with $S$ smooth}\}$, so that $f$ is defined on $\sC_0'$ 

In the next proposition, we consider the Abel--Jacobi map $\Psi\colon\sC'_0\to J(Y)$ and use diagram \eqref{eq_diagram_AJ_codiff} to study its codifferential (let $W$ be the weighted projective space $\mathbb{P}(1,1,1,1,2)$, so that $N_{Y/W}\simeq \sO_Y(4)$).

\begin{proposition}
\label{prop_AJ_factorsThroughC}
Let $Z$ be a smooth conic in $\sC'_0$. The map $\Psi$ factors through $f\colon\sC'_0\to \sC$ and its codifferential has rank 3 at $Z$.
\end{proposition}

\begin{proof}
The restriction of the Abel--Jacobi map to $\sC'_0$ is constant on pencils of curves, so it factors through $f$ (so in particular $\rk\psi_Z^\vee\leq 3$). Recall that the lower row of \eqref{eq_diagram_AJ_codiff} is part of the long exact sequence associated to 
$$0\longrightarrow N_{Z/Y} \otimes K_Y \longrightarrow N_{Z/W} \otimes K_Y \longrightarrow (N_{Y/W}\otimes K_Y)_{|Z} \longrightarrow 0.$$


The bundle $N_{Z/W}$ can be computed as follows: let $\overline{Z} \subset \pr 3$ be the smooth conic image of $Z$ under $\pi$. The standard exact sequence $ \sO_W(2)\to \sT_W \to \pi^*\sT_{\pr 3} $ yields the diagram 
\begin{equation*}\label{eq_normal_Z_in_E}
\begin{tikzcd}[]
 & \sO_Z(2) \ar[equal]{r} \dar & \sO_Z(2) \dar\\
\sT_Z \rar \ar["\cong"]{d} & \sT_{W|Z} \rar \dar & N_{Z/W} \dar\\
\sT_{\overline{Z}} \rar &  \pi^*\sT_{\pr 3|\overline{Z}}\rar & \pi^*N_{\overline Z/\pr 3}
\end{tikzcd}
\end{equation*}
where all rows and columns are exact. Since $\overline Z$ is a complete intersection we have $N_{\overline Z/\pr 3}\simeq \sO_{\overline Z}(2)\oplus \sO_{\overline{Z}}(1)$, hence from the third column $N_{Z/W} \otimes K_Y \simeq \sO_{Z}^{\oplus 2}\oplus \sO_{Z}(-1)\simeq \sO_{\pr 1}^{\oplus 2}\oplus \sO_{\pr 1}(-2)$.

Moreover, we have $(N_{Y/W}\otimes K_Y)_{|Z}\simeq \sO_{\pr 1}(4)$. The kernel of the connecting homomorphism $\beta_Z\colon H^0((N_{Y/W}\otimes K_Y)_{|Z}) \to H^1(N_{Z/Y} \otimes K_Y)$ has dimension $2-h^0(N_{Z/Y}\otimes K_Y)$, so $\rk \beta_Z \geq 3$.  

Now it suffices to show that the restriction map $$H^0(K_Y(4))\longrightarrow H^0(K_Y(4)_{|Z})$$ is surjective: this implies $\psi_Z^\vee$ has rank at least 3, hence exactly 3.
Since $Z$ lies on a hyperplane section $S$ of $Y$, surjectivity can be checked in two steps: $H^0(K_Y(4))\to H^0(K_Y(4)_{|S})$ is surjective because $H^1(\sO_Y(1))=0$, and the surjectivity of $H^0(K_Y(4)_{|S})\to H^0(K_Y(4)_{|Z})$ follows from the sequence 
$$ \sO_S(2H - Z) \longrightarrow \sO_S(2H) \longrightarrow \sO_Z (2), $$
since $H-Z$ is a root of $S$, and applying Lemma \ref{lem_KodairaVan} to $2H-Z=(H-Z)+H$.
\end{proof}

We summarize the above Proposition \ref{prop_AJ_factorsThroughC} and Remark \ref{rmk_YContractedByAJ} in the following Corollary.

\begin{corollary}\label{cor_C_generically_embedded_by_AJ}
The Abel--Jacobi map $\Psi\colon \sM_\sigma(w) \to J(Y)$ contracts the irreducible component $\mathcal Y$ to a point $y\in J(Y)$ and is a generic embedding on $\sC$.
\end{corollary}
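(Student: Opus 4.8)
The plan is to package the two cited inputs. For the contraction statement I would invoke Remark~\ref{rmk_YContractedByAJ} directly: all objects $E_p$ parametrized by $\sY$ fit in the triangle \eqref{eq_def_of_Ep} with the same term $\sO(-1)[1]$ and with $I_p$ varying among ideal sheaves of points, which all share one second Chern class; hence $c_2(E_p)$ is independent of $p$ and the assignment $F\mapsto \Phi(c_2(F)-c_2(F_0))$ is constant on $\sY$. Alternatively, $\sY\simeq Y$ is rationally connected while $J(Y)$ is an abelian variety, so $\Psi$ must contract $\sY$. Either way $\Psi(\sY)=\{y\}$ for a single point $y\in J(Y)$.

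For the statement on $\sC$, the first step is to recognize $\Psi|_\sC$, after precomposition with $f$, as the Abel--Jacobi map of conics. For a smooth conic $Z\in\sC'_0$ lying on a smooth hyperplane section $S$, the sheaf $f(Z)=\iota_*\sO_S(Z-H)$ has second Chern class equal to the $1$-cycle $[Z]$ modulo the fixed class $[H_{|S}]$; subtracting the base point then gives $\Psi(f(Z))=\Phi([Z]-[Z_0])$, so that $\Psi\circ f$ coincides with the Abel--Jacobi map $\sC'_0\to J(Y)$ up to a global translation of $J(Y)$. Translations do not affect differentials, so this identification lets me import the infinitesimal computation of Proposition~\ref{prop_AJ_factorsThroughC}.

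I would then descend the rank computation from $\sC'_0$ to $\sC$. Since $f$ is dominant with one-dimensional fibers (the pencils $|D+H|$ of Section~\ref{sec_AJY2}), at a general $Z$ the differential $df_Z\colon T_Z\sC'_0\to T_{f(Z)}\sC$ is surjective; note $\sC$ is smooth at $f(Z)$ by Theorem~\ref{thm_moduliY2}, so the target tangent space is three-dimensional. The differential of the Abel--Jacobi map on $\sC'_0$ factors as $d(\Psi|_\sC)_{f(Z)}\circ df_Z$ and has rank $3$ by Proposition~\ref{prop_AJ_factorsThroughC}; surjectivity of $df_Z$ then forces $d(\Psi|_\sC)_{f(Z)}$ to have rank $3=\dim\sC$. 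Thus $\Psi|_\sC$ is unramified with injective differential at a general point, which is the asserted generic embedding.

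The one delicate point is precisely this transfer of ranks across $f$: one must ensure that the rank $3$ established for conics is not an artifact of the extra pencil direction but genuinely fills the three-dimensional tangent space of $\sC$. This is guaranteed by $f$ being a dominant morphism, hence generically submersive, so that its one-dimensional fibers contribute only to $\ker df_Z$ and no rank is lost. Everything else is bookkeeping with Remark~\ref{rmk_YContractedByAJ} and Proposition~\ref{prop_AJ_factorsThroughC}.
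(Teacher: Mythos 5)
Your proposal is correct and follows essentially the same route as the paper: the corollary is stated there precisely as a summary of Remark \ref{rmk_YContractedByAJ} (contraction of $\sY$) and Proposition \ref{prop_AJ_factorsThroughC} (factorization of the Abel--Jacobi map of conics through $f$ together with the rank-3 codifferential computation). The only difference is that you spell out two steps the paper leaves implicit --- the identification, via the $c_2$ computation, of $\Psi|_{\sC}\circ f$ with the Abel--Jacobi map on $\sC'_0$ up to sign and translation, and the chain-rule descent of the rank $3$ across the dominant map $f$ onto the smooth three-dimensional locus of $\sC$ --- which is a faithful filling-in of the same argument rather than a different one.
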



\subsection{Degree 1}\label{sec_d=1}

We reformulate Theorem \ref{introduction_1} in the case of a general double Veronese cone. The arguments are similar to the proof of Theorem \ref{thm_moduliY2}.

\begin{thm}\label{thm_moduliY1}
Suppose $Y$ is a general double Veronese cone. The space $\sM_\sigma(w)$ has two irreducible components $\sY$ and $\sF$, isomorphic respectively to $Y$ and $F(Y)$. The two intersect exactly at the curve $C$, and they are smooth outside of $C$.

The Abel--Jacobi map $\Psi \colon \sM_\sigma(w) \to J(Y)$ contracts $\mathcal Y$ to a singular point $y$ and it is an embedding elsewhere. Moreover, the image $\Psi(\sM_\sigma(w))$ determines $Y$ uniquely.
\end{thm}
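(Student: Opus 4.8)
The plan is to follow the blueprint of Theorem \ref{thm_moduliY2}, replacing the family of conics and the ramification surface $R$ by the Fano surface of lines $F(Y)$ and the curve $C$. By Theorem \ref{thm_Section2} I may already take for granted that $\sM_\sigma(w)$ is proper, that it contains the smooth threefold $\sY \cong Y$ parametrizing the objects $E_p$, and that $\sM_\sigma(w)\setminus\sY$ is smooth of dimension $d+1 = 2$ and parametrizes roots of $Y$, i.e.\ sheaves $\iota_*\sO_S(D)$ with $D$ a root of a hyperplane section $S$. Writing $\sF$ for the closure of this root locus, the substance of the first two assertions is then that $\sF \cong F(Y)$ and that $\sY \cap \sF = C$, while $\sY$ is smooth and $\sM_\sigma(w)$ is smooth away from $C$.

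First I would set up the root--line correspondence on a smooth hyperplane section $S$, a degree-$1$ del Pezzo surface whose orthogonal lattice $(K_S)^\perp$ is $E_8$. The assignment $L \mapsto L + K_S = L - H_{|S}$ is a bijection between the $240$ lines and the $240$ roots of $S$: indeed $(L - H_{|S})\cdot H_{|S} = 0$ and $(L - H_{|S})^2 = -2$, and the inverse sends a root $D$ to the unique effective divisor $L = D + H_{|S}$ in its class, which is a $(-1)$-curve since $(D + H_{|S})^2 = -1$. The geometric input I would extract from $h^0(\sO_Y(1)) = 3$ is that a general line $L \subset Y$ lies on a \emph{unique} hyperplane section $S_L$ (the restriction $H^0(\sO_Y(1)) \to H^0(\sO_L(1)) = \C^2$ is surjective, so $\dim |H - L| = 0$). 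Thus $L \mapsto \iota_*\sO_{S_L}(L - H_{|S_L})$ defines an assignment $F(Y) \to \sF$, which I would promote to a scheme isomorphism using a universal family of root sheaves over $F(Y)$ built from the universal line via the base-change and mutation formalism of Section \ref{sec_basechange_mutations}, together with corepresentability of the moduli functor, exactly as in Lemma \ref{lem_Families_IpEp} and Proposition \ref{prop_GiesekerVSTilt}. Smoothness of $\sF \cong F(Y)$ and the embedded copy of $C$ come from Tihomirov \cite{Tih82}.

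Next I would pin down the intersection. As in Theorem \ref{thm_moduliY2}, the wall-crossing morphism $s \colon \sM_G(w) \to \sM_\sigma(w)$ of Proposition \ref{prop_GiesekerVSTilt} sends $I_{p|S} \mapsto E_p$, so by Proposition \ref{gieseker} the two components meet exactly along $\{E_p : p \text{ is a singular point of some hyperplane section}\}$. It then remains to identify this locus with the copy of $C$ inside $Y$, using the generality hypothesis and diagram \eqref{eq_diagram_rational_maps}: a hyperplane section is singular precisely when the corresponding line in $\pr 2$ is tangent to $\pi(C)$, and the associated singular points sweep out $C$. This is consistent with the tangent-space jump of Lemma \ref{lem_numerics}, namely $\Ext^1(E_p, E_p) = \C^3 \oplus V_p$ with $V_p \neq 0$ iff $p \in C$, and with the numerology $\dim\sY + \dim\sF - \dim C = 3 + 2 - 1 = 4$; it also yields smoothness of both components away from $C$.

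Finally, for the Abel--Jacobi statement, Remark \ref{rmk_YContractedByAJ} contracts $\sY$, hence also $C \subset \sY$, to a single point $y$. Away from $C$, on $\sF \setminus C \cong F(Y)\setminus C$, I would identify $\Psi$ with the classical Abel--Jacobi map of lines: for $F = \iota_*\sO_S(D)$ with $L := D + H_{|S}$, Grothendieck--Riemann--Roch gives $c_2(F) = [L]$ up to the fixed class $\iota_*(H_{|S})$, so that $\Psi(F) = \Phi([L] - [L_0])$. Tihomirov's analysis \cite{Tih82} of this map then shows it is an immersion, giving that $\Psi$ is an embedding on $\sM_\sigma(w) \setminus \sY$; passing to closures, $\Psi(\sM_\sigma(w))$ is the Abel--Jacobi image of $F(Y)$ with $C$ collapsed to $y$, so $y$ is a singular point of the image, which by Tihomirov's reconstruction theorem determines $Y$. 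The main obstacle I anticipate is the step $\sF \cong F(Y)$: upgrading the set-theoretic root--line correspondence to a scheme isomorphism that behaves correctly over the base point $y_0$ of $|H|$ and over the degenerate locus $C$ (singular hyperplane sections and the nodal ``singular lines''), and matching $\Psi|_\sF$ with Tihomirov's map precisely enough over $C$ that his embedding and reconstruction theorems apply verbatim.
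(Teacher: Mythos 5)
Your proposal is correct and follows essentially the same route as the paper: reduce via Theorem \ref{thm_Section2} and the argument of Theorem \ref{thm_moduliY2} (with $C$ playing the role of $R$), identify $\sF$ with $F(Y)$ through the root--line correspondence by building a universal family of root sheaves from the universal line using the base-change/mutation formalism, and quote Tihomirov for the Abel--Jacobi embedding and reconstruction statements. The ``main obstacle'' you flag is resolved in the paper exactly with the tools you name: the family $\sO_{\sS}(p_Y^*H-\sL)$ gives a rational map $F(Y)\dashrightarrow \sM_G(w)$ that extends by properness, the pulled-back universal family then has fibers $I_{p|S}$ over $C$, and applying the projection $\mathbb{R}_{\sO_Y(-1)}^{F(Y)}$ converts these into the objects $E_p$ (as in Proposition \ref{prop_GiesekerVSTilt}), producing a universal family for $\sF$ over $F(Y)$ and hence the desired isomorphism.
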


\begin{proof}
The statements about the component $\sF\coloneqq \overline{\sM_\sigma(w)\setminus \sY}$ are already in Theorem \ref{thm_Section2}. We can argue as in the proof of Theorem \ref{thm_moduliY2} (with the variation that now $p$ is a singular point of some hyperplane section if and only if $p\in C$) and conclude that $\sY$ and $\sF$ are irreducible components of the desired dimensions, intersecting at $C$ and smooth outside the intersection. 

We are then only left with proving that $\sF$ is isomorphic to $F(Y)$. To this end, we define a morphism $F(Y)\to \sF$ by constructing a family of objects of $\sF$ with base $F(Y)$.  Let $\sH\subset Y\times \mathbb P(H^0(\sO_Y(1)))$ be the universal hyperplane section. Since any line in $Y$ is contained in a unique hyperplane section of $Y$, there is a finite map $F(Y)\to \mathbb P(H^0(\sO_Y(1)))$, so we can define $\sS\subset Y\times F(Y)$ as the pullback of $\sH$. 

Let $\sL\subset\sS\subset Y\times F(Y)$ be the universal line, and consider the sheaf $\sK\coloneqq \sO_\sS(p_Y^\ast H-\sL)$ on $Y\times F(Y)$.
Its fiber is $\sO_S(H-L)$  over a smooth line $L\in F(Y)$. Observe that $H-L$ is a root of $S$, therefore the restriction of $\sK$ to $Y\times (F(Y)\setminus C)$ defines a rational map $F(Y)\dashrightarrow \sM_H(w)$. By the properness of $F(Y)$ and $\sM_H(w)$, the above map extends to a morphism $F(Y)\to \sM_H(w)$, which is an isomorphism of $F(Y)\setminus C$ onto the smooth locus of the irreducible component of $\sM_H(w)$ parametrizing roots on hyperplane sections. Let $\sK'$ be the pull-back of the universal family to $Y\times F(Y)$ which is isomorphic to $\sK$ on $Y\times (F(Y)\setminus C)$. The fiber of $\sK'$ at a singular line in $C\subset F(Y)$ is the flat limit of a family of push forwards of roots, and therefore is of the form $I_{p|S}$ for some $p\in S$ and some hyperplane section $S$. 

Then, the object $\sK''\coloneqq \mathbb R_{\sO_Y(-1)}^{F(Y)}(\sK)\in \!^\perp\pair{\sO_Y(-1)}_{F(Y)}$ (notation as in Section~\ref{sec_basechange_mutations}) is a family of objects of $\sF$ parametrized by $F(Y)$, and it induces the desired morphism $\mu\colon F(Y)\to \sF$.

Moreover, $\sK''$ is isomorphic to $\sK$ off the curve $C$, and its fiber over a singular line $[L]\in C$ fits in a triangle
\[ \mathcal \sK''_{[L]} \to I_{p|S} \xrightarrow{\mathrm{coev}} \sO_Y(-1)[1]\oplus \sO_Y(-1)[2] \]
whence $\sK_{[L]}\simeq E_p$ (as in the proof of Proposition~\ref{prop_GiesekerVSTilt}). Therefore, $\sK''$ is a universal family for $\sF$ supported on $F(Y)$. Then, the morphism $\mu$ is an isomorphism.

Lastly, the component $\sY$ is contracted by $\Psi$ as in Remark \ref{rmk_YContractedByAJ}, while the Abel--Jacobi mapping is an embedding on smooth lines by \cite[\S 10]{Tih82}, and $Y$ is determined uniquely via $\Psi(\sM_\sigma(w))=\Psi(F(Y))$ by \cite[Thm.~12]{Tih82}.
\end{proof}

\section{A categorical Torelli theorem for quartic double solids}
\label{sec_Torelli}

 As an application of the previous construction, this section is dedicated to the proof of Theorem \ref{introduction_3}, we state it here for the reader's convenience. In this section, $d=2$.
 
\begin{thm}\label{thm_torelliY2}
Let $Y$ and $Y'$ be two general quartic double solids such that there exists an equivalence $u \colon \Ku(Y') \xrightarrow{\sim} \Ku(Y)$. Then $Y'\simeq Y$.
 \end{thm}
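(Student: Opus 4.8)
The plan is to use $u$ to transport the moduli space $\sM_{\sigma'}(w')$ isomorphically onto $\sM_\sigma(w)$ and then to read off $Y$ from a distinguished irreducible component, following the template of \cite{BMMS12}. First I would pin down the behaviour of $u$ on stability conditions. Since $u$ is an exact equivalence it commutes with the Serre functors, and by the computation $S_{\Ku(Y)}=\tau[2]$ recalled in Section \ref{sec_generalitiesY2}, both $\sigma$ and $\sigma'$ are Serre-invariant. As $u$ carries Serre-invariant stability conditions to Serre-invariant ones, and the Serre-invariant stability conditions on these Kuznetsov components form a single $\GGL$-orbit (cf.\ \cite{PY20}), the transported condition $u(\sigma')$ lies in the orbit of $\sigma$. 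Consequently $u$ sends $\sigma'$-(semi)stable objects to $\sigma$-(semi)stable objects, up to shift.

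Next I would match the numerical class. The equivalence induces an isometry $u_\ast\colon K_{\mathrm{num}}(\Ku(Y'))\to K_{\mathrm{num}}(\Ku(Y))$ for the Euler pairing, and since $S_{\Ku(Y)}$ acts trivially on $K_{\mathrm{num}}$ no Serre twist intervenes, so $u_\ast$ is a genuine isometry of the two rank-two lattices. For $d=2$ the Euler form is symmetric and negative definite, and solving $\chi(x,x)=\chi(w,w)=-2$ shows that the only classes of self-pairing $-2$ are $\pm w$ and $\pm(2v-w)$. Composing $u$ with the shift $[1]$, which acts by $-1$ on $K_{\mathrm{num}}$, I may assume $u_\ast(w')\in\{w,\,2v-w\}$; I would then reduce to $u_\ast(w')=w$ either by composing with an autoequivalence realizing the lattice symmetry $w\leftrightarrow 2v-w$, or by checking directly that $\sM_\sigma(2v-w)$ carries the same description, with a copy of $Y$ as a rationally connected component, so that the reconstruction below applies for either class.

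With class and stability matched, $u$ identifies the two moduli problems and induces an isomorphism of fine moduli spaces $\sM_{\sigma'}(w')\xrightarrow{\sim}\sM_\sigma(w)$ \emph{as schemes}; here one invokes the formalism of moduli of objects in the associated dg-enhancements, which are unique by Lunts--Orlov and let the equivalence act on families without assuming $u$ is of Fourier--Mukai type, which is precisely where the statement improves on \cite{BT16}. By Theorem \ref{thm_moduliY2} each side has exactly two irreducible components, both threefolds: $\sY\cong Y$ (resp.\ $\sY'\cong Y'$) and $\sC$ (resp.\ $\sC'$). These are distinguished intrinsically by rational connectedness: $\sY\cong Y$ is Fano, hence rationally connected, whereas $\sC$ cannot be, since $\Psi$ is a generic embedding of $\sC$ into the abelian variety $J(Y)$ (Corollary \ref{cor_C_generically_embedded_by_AJ}) and a rationally connected variety admits only constant maps to an abelian variety. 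Therefore the scheme isomorphism must match the rationally connected components, yielding $Y'\cong\sY'\cong\sY\cong Y$.

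The main obstacle I expect is the scheme-theoretic, as opposed to merely set-theoretic, isomorphism of the two moduli spaces in the absence of the Fourier--Mukai hypothesis: one must promote $u$ to an action on families, which requires the enhancement input above together with careful bookkeeping of the universal families and of base change as in Section \ref{sec_basechange_mutations}. The comparison of stability conditions, ensuring that $u(\sigma')$ and $\sigma$ genuinely produce the same \emph{stable} objects and not merely the same semistable ones, and the lattice ambiguity between $w$ and $2v-w$, are secondary points that nonetheless must be settled before the component-matching argument can be run.
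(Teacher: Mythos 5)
Your overall template (transport stability through $u$, fix the class ambiguity between $w$ and $2v-w$, identify moduli, and distinguish the two components by rational connectedness via the Abel--Jacobi map) agrees with the paper's, and your endgame is essentially Corollary \ref{cor_C_generically_embedded_by_AJ} plus smoothness of $\sY$. But there is a genuine gap at the decisive step: you promote the pointwise assignment $E\mapsto u(E)$ to an isomorphism of moduli \emph{schemes} by appealing to uniqueness of dg enhancements (Lunts--Orlov). Uniqueness of enhancements does not imply that a given exact triangulated equivalence $u\colon \Ku(Y')\xrightarrow{\sim}\Ku(Y)$ lifts to a quasi-functor between those enhancements, so it does not let $u$ act on families. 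If such a lift did exist, $u$ would automatically be of Fourier--Mukai type, and the theorem would already follow from \cite[Cor. 3.1 (iii)]{BT16}; whether every such equivalence is of Fourier--Mukai type is precisely the content of Kuznetsov's conjecture \cite[Conj.~3.7]{kuz07HPD}, which the paper explicitly flags as open in degree 2 (item (ii) of the introduction). So your argument silently reinstates the hypothesis that the theorem is designed to remove.

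The paper fills exactly this hole differently: after proving (Lemmas \ref{dimext1}--\ref{keylemma}, which rest on the homological dimension 2 of the $\tau$-invariant heart and the spectral sequence \eqref{spectral}) that $u$ carries points of $\sM'$ to points of $\sM$ with a uniform shift, it does \emph{not} construct an isomorphism $\sM'\simeq\sM$. It only builds an object $\tilde\sE\in D^b(Y\times\sY')$ whose fiber over each closed point $s\in\sY'$ is $u(E'_s)$, by resolving the universal family $\sE'$ by sheaves of the form $\sO_{Y'}(-N_i)\boxtimes L^{\otimes -r_i}$, applying the composite functor $F$ termwise, and taking the unique split right convolution (Lemma \ref{lem_unique_convolution}, following \cite{BMMS12}). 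This produces a proper morphism $\alpha\colon\sY'\to\sM$ scheme-theoretically, with no enhancement or Fourier--Mukai input, and then the rational-connectedness argument you describe closes the proof. A secondary caveat: your first step invokes a one-orbit statement for Serre-invariant stability conditions citing \cite{PY20}; no such statement is established or used in this paper (which instead deduces stability of $u(E)$ directly from the numerics), so as written this is an external ingredient you would still need to verify for quartic double solids.
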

 
This theorem improves a result of Bernardara and Tabuada \cite[ Corollary~3.1 (iii)]{BT16} who show that the same holds under the additional assumption that $u$ is of Fourier--Mukai type. The authors use noncommutative motives to construct isogenies between intermediate jacobians. In \cite{perry2020integral}, the author obtains similar results on intermediate jacobians, using only cohomological methods. Our technique is inspired by that of \cite{BMMS12}; we construct an isomorphism between moduli spaces and use the Abel--Jacobi map to argue that this is sufficient to conclude.

\subsection{Images of stable objects} First, we prove that given an equivalence as in Theorem \ref{introduction_3}, we can produce another equivalence with nice properties on the objects parametrized by $\mathcal{M}'$. 

We start with a series of Lemmas, and denote by $\mathcal{A}= \Ku(Y) \cap \Coh^0_{\alpha,-\frac{1}{2}}(Y)$ the heart of $\Ku(Y)$ associated to the stability condition $\sigma$ (Proposition~\ref{thm_stab_con_on_Ku}). We say that a heart of a bounded t-structure $\sB\subset \Ku(Y)$ is \textit{$\tau$-invariant} if $\tau(\sB)=\sB$. Then we have:

\begin{lemma}
The heart $\sA$ is $\tau$-invariant.
\end{lemma}

\begin{proof}
By definition, $\mathcal{A}= \Ku(Y) \cap \Coh^0_{\alpha,-\frac{1}{2}}(Y)$. Clearly $\tau(\Ku(Y))\subseteq \Ku(Y)$ since up to shift $\tau$ is the Serre functor of $\Ku(Y)$. Then, it suffices to show that $\tau$ preserves $\Coh^0_{\alpha,-\frac{1}{2}}(Y)$ to conclude.

Observe that $\tau$ acts trivially on numerical classes of objects in $D(Y)$, since it preserves the class $H$. Therefore, $\tau$ preserves slopes. Moreover, it preserves exact sequences of sheaves and hence slope stability. This implies that $\tau$ preserves $\Coh^\beta(Y)$ for all $\beta\in \R$. Similarly one sees that $\tau$ preserves $\sigma_{\alpha,\beta}$-semistable objects for all $(\alpha,\beta)\in \R_{>0}\times \R$, and hence it preserves the double tilt. 
\end{proof}

\begin{lemma}\label{extdimension}
The homological dimension of a $\tau$-invariant heart $\mathcal{B}\subset \Ku(Y)$ is equal to 2, \textit{i.e.} $\Ext^i(E,F)=0$ for $i\neq 0,1,2$ for $E, F\in \mathcal{B}$.
\end{lemma}
\begin{proof}
Since $\mathcal{B}$ is the heart of a bounded t-structure, we have by definition that $\Ext^i(E,F)=0$ for any $E,F \in \mathcal B$, $i<0$. Now, using the above Serre functor we get:
\[
\Ext^i(E,F)=\Hom(E,F[i])= \Hom(F[i],S_{\Ku(Y)}(E))= \Ext^{2-i}(F ,\tau E),
\]
which concludes the proof, since $\sB$ is $\tau$-invariant and hence $\tau E \in \mathcal{B}$.
\end{proof}

 \begin{lemma}
 Let $u \colon \Ku(Y') \xrightarrow{\sim} \Ku(Y)$ be an equivalence of categories, and let $E\in \Ku(Y')$ be an object of class $w$; then up to a sign, we have $[u(E)]=w$ or $[u(E)]=2v-w$.
 \end{lemma}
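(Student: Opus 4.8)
The plan is to reduce the statement to a numerical computation on the rank-two lattice $K_{\mathrm{num}}(\Ku(Y))$. The crucial input is that any exact equivalence $u\colon \Ku(Y')\xrightarrow{\sim}\Ku(Y)$ induces an isometry of the numerical Grothendieck groups with respect to the Euler pairing, since $\Ext^i(u(E),u(F))\simeq \Ext^i(E,F)$ for all $i$ and all $E,F$. In particular, writing $[u(E)]=av+bw$ for integers $a,b$ (which is legitimate because $u(E)\in\Ku(Y)$ and $K_{\mathrm{num}}(\Ku(Y))=\Z v\oplus \Z w$), the equivalence forces $\chi([u(E)],[u(E)])=\chi([E],[E])=\chi(w,w)$.

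First I would record that for degree $d=2$ the Euler pairing matrix specializes to $\begin{bmatrix}-1 & -1\\ -1 & -2\end{bmatrix}$, which is symmetric (this is the special feature $1-d=-1$), negative definite, and unimodular; in particular $\chi(w,w)=-2$. Expanding $\chi(av+bw,av+bw)$ against this matrix yields the quadratic form $-a^2-2ab-2b^2$.

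The main step is then to solve the Diophantine equation obtained by equating this to $-2$, namely $a^2+2ab+2b^2=2$, which I would rewrite as $(a+b)^2+b^2=2$. Its only integer solutions have $|a+b|=1$ and $|b|=1$, producing the four pairs $(a,b)\in\{ (0,1),(0,-1),(2,-1),(-2,1)\}$. These correspond precisely to the classes $w,\,-w,\,2v-w,\,-(2v-w)$, which is exactly the assertion up to sign.

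The step I expect to require the most care is conceptual rather than computational: one must use only the numerical invariant that every equivalence preserves — the self-pairing $\chi([u(E)],[u(E)])$ — and not any finer structure, and verify that the Euler form for $d=2$ is genuinely symmetric so that the quadratic form is well-behaved and the lattice unimodular. Once this is in place the solution of the integer quadratic is immediate, so the only remaining bookkeeping is keeping the four sign cases straight; there is no serious obstacle beyond that.
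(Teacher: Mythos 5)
Your proposal is correct and follows essentially the same route as the paper's proof: both use that an equivalence preserves Euler characteristics, write $[u(E)]=av+bw$, expand the self-pairing against the $d=2$ Euler form to get $-(a+b)^2-b^2=-2$, and solve $(a+b)^2+b^2=2$ to obtain exactly the classes $\pm w$ and $\pm(2v-w)$. The paper's proof adds one remark beyond the stated claim (that the sign can be chosen uniformly for all objects, since $u$ induces a homomorphism on Grothendieck groups), which is used later in the Torelli argument but is not needed for the lemma as phrased, so your argument is complete.
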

 \begin{proof}
Since $u$ is an equivalence of categories, it must preserve dimensions of $\Hom$ spaces, so in particular $\chi(u(E),u(E))=\chi(E,E)=-2$. From the intersection matrix of the Euler form given in section 2, if $[u(E)]=av+bw$, then $\chi(u(E),u(E))=-(a+b)^2-b^2$. The only possible pairs $(a,b)$ are $(0,\pm 1)$ and $(\mp 2,\pm 1)$. Finally, since an equivalence of categories induces a homomorphism on the Grothendieck groups, we can choose the sign uniformly for all objects.
\end{proof}
\begin{lemma}\label{dimext1} 
Let $0\neq C\in \mathcal{A}\subset \Ku(Y)$; then $\dim \Ext^1(C,C)\geq 2$.
\end{lemma}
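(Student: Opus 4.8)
The plan is to read off $\dim\Ext^1(C,C)$ from the Euler characteristic, using that all other $\Ext$ groups either vanish for degree reasons or contribute non-negatively. First I would invoke Lemma~\ref{lem_AisTauInvariant} and Lemma~\ref{extdimension}: the heart $\sA$ is $\tau$-invariant and therefore has homological dimension $2$, so $\Ext^i(C,C)=0$ for $i\notin\{0,1,2\}$, and hence
$$\chi(C,C)=\dim\Hom(C,C)-\dim\Ext^1(C,C)+\dim\Ext^2(C,C).$$
Rearranging, $\dim\Ext^1(C,C)=\dim\Hom(C,C)+\dim\Ext^2(C,C)-\chi(C,C)$, so it suffices to check $\dim\Hom(C,C)\geq 1$, $\dim\Ext^2(C,C)\geq 0$, and $\chi(C,C)\leq -1$. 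The first holds since $\mathrm{id}_C\neq 0$, and the second is automatic.

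Next I would establish the inequality $\chi(C,C)\leq -1$ purely numerically. Writing $[C]=av+bw$ in $K_{\mathrm{num}}(\Ku(Y))$ and evaluating the Euler form (with $d=2$) gives $\chi(av+bw,av+bw)=-(a+b)^2-b^2$, exactly as in the computation preceding this lemma. For any nonzero integral pair $(a,b)$ this quantity is at most $-1$: if $b=0$ then $a\neq 0$ forces $a^2\geq 1$, while if $b\neq 0$ then $b^2\geq 1$.

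The one point requiring care --- and the main obstacle --- is therefore the non-vanishing of the class $[C]$ in $K_{\mathrm{num}}(\Ku(Y))$; without it the bound degrades to $\dim\Ext^1(C,C)\geq 1$. Here I would use that $C$ is a \emph{nonzero} object of the heart $\sA$ of the Bridgeland stability condition $\sigma$ from Proposition~\ref{thm_stab_con_on_Ku}: its central charge $Z=Z^0_{\alpha,-\frac12}$ is an honest stability function on $\sA$, so $Z(C)\neq 0$ whenever $C\neq 0$, which forces $[C]\neq 0$. Combining the three bounds then yields $\dim\Ext^1(C,C)\geq 1+0+1=2$, as claimed.
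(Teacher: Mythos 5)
Your proposal is correct and follows essentially the same route as the paper's proof: non-vanishing of $[C]$ from the stability function, the numerical bound $\chi(C,C)\leq -1$ from the Euler form, $\dim\Hom(C,C)\geq 1$, and the vanishing of $\Ext^{i}(C,C)$ for $i\notin\{0,1,2\}$ via Lemmas \ref{lem_AisTauInvariant} and \ref{extdimension}. The paper states these same four ingredients more tersely; your write-up merely makes the bookkeeping explicit.
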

\begin{proof}
Since $\sA$ is the heart of a stability condition, $C\neq 0$ implies that $[C]\neq 0$. Then, we have $\chi(C,C) \leq -1$, $\dim \Hom(C,C)\geq 1$, and $\Ext^3(C,C)=0$ from Lemma \ref{extdimension}. Therefore,  $\dim \Ext^1(C,C)\geq 2$.
\end{proof}
\begin{lemma}\label{ext1=3}
Let $C \in \Ku(Y)$ with $\dim\Ext^1(C,C)=3$; then up to a shift $C \in \mathcal{A}$, and if additionally $[C]=  w$ then $C$ is stable.
\end{lemma}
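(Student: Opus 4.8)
The plan is to establish the two assertions separately: first that the condition $\dim\Ext^1(C,C)=3$ forces $C$ to be concentrated, up to shift, in the heart $\mathcal A$ (this is the crux), and then that the numerical hypothesis $[C]=w$ upgrades this to stability. Throughout I would use only the two structural inputs already available for $d=2$: the heart $\mathcal A$ has homological dimension $2$ (Lemma \ref{extdimension}), so $\Ext^{i}(E,F)=0$ for $E,F\in\mathcal A$ whenever $i\notin\{0,1,2\}$; and every nonzero object of $\mathcal A$ has $\dim\Ext^1\ge 2$ (Lemma \ref{dimext1}). The heart of the matter is the following estimate: \emph{if $C$ has exactly $n$ nonzero $\mathcal A$-cohomology objects $\mathcal H^i_{\mathcal A}(C)$, then $\dim\Ext^1(C,C)\ge 2n$}.

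I would prove this estimate by induction on $n$, the case $n=1$ being Lemma \ref{dimext1}. For the inductive step, shift so that the lowest nonzero cohomology sits in degree $0$, set $A=\mathcal H^0_{\mathcal A}(C)$, and let $C'$ be defined by the truncation triangle $A\to C\to C'\to A[1]$, so that $C'$ carries the cohomologies $\mathcal H^i_{\mathcal A}(C)$ for $i\ge 1$ and has $n-1$ of them. Applying $\Hom(-,C[1])$ gives an exact sequence
\[
\Hom(A,C)\xrightarrow{\ d_0\ }\Ext^1(C',C)\xrightarrow{\ \alpha\ }\Ext^1(C,C)\xrightarrow{\ \beta\ }\Ext^1(A,C)\xrightarrow{\ d_1\ }\Ext^2(C',C),
\]
whence $\dim\Ext^1(C,C)=\dim\coker(d_0)+\dim\ker(d_1)$. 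The two summands are controlled by the extreme cohomology $A$ and by the remainder $C'$, respectively. On one hand, since $\Hom(A,C')=0$ the sequence for $\Hom(A,-)$ yields an inclusion $\Ext^1(A,A)\hookrightarrow\Ext^1(A,C)$, and its image lies in $\ker(d_1)$: the relevant obstruction is a class in $\Hom(C'[-1],A[1])\cong\Ext^2(C',A)$, a group which vanishes because $C'$ lives in cohomological degrees $\ge 1$ while $A$ sits in degree $0$, so every contribution is an $\Ext^{\ge 3}$ between objects of $\mathcal A$ (Lemma \ref{extdimension}). Thus $\dim\ker(d_1)\ge\dim\Ext^1(A,A)\ge 2$. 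On the other hand, post-composition with $C\to C'$ defines $q_\ast\colon\Ext^1(C',C)\to\Ext^1(C',C')$, which the same vanishing $\Ext^2(C',A)=0$ makes surjective; moreover every map $A\to C$ factors through $A\to C$ from the triangle (again as $\Hom(A,C')=0$), so $\operatorname{im}(d_0)\subseteq\ker(q_\ast)$ and $q_\ast$ descends to an epimorphism $\coker(d_0)\twoheadrightarrow\Ext^1(C',C')$. Hence $\dim\coker(d_0)\ge\dim\Ext^1(C',C')\ge 2(n-1)$ by induction, and adding the two bounds gives $\dim\Ext^1(C,C)\ge 2n$.

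Granting the estimate, the first assertion is immediate: if $C$ had $n\ge 2$ nonzero cohomology objects we would obtain $\dim\Ext^1(C,C)\ge 4$, contradicting the hypothesis; since $C\ne0$, this leaves $n=1$, i.e. $C\in\mathcal A$ up to shift. For the second assertion, assume in addition $[C]=w$; from the Euler form one has $\chi(C,C)=-d=-2$. Put $C$ into $\mathcal A$ by a shift $C[k]$: its class is $(-1)^kw$, and since $Z(w)=1$ is real and positive, the weak–stability inequalities ($\im Z\ge0$, with $\im Z=0\Rightarrow\re Z\le0$) force $(-1)^k=-1$, so $C[k]\in\mathcal A$ has class $-w$. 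As $\im Z(-w)=0$ this object has infinite slope, hence maximal phase in $\mathcal A$ and is therefore semistable; because $w$ is primitive in $K_{\mathrm{num}}(\Ku(Y))$ there are no strictly semistable objects of this class (Remark \ref{rmkstable}), so $C$ is $\sigma$-stable. Consistently, homological dimension $2$ together with $\chi(C,C)=-2$ and $\dim\Ext^1(C,C)=3$ forces $\dim\Hom(C,C)=1$ and $\dim\Ext^2(C,C)=0$, exactly the numerics of the stable objects of class $w$.

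The main obstacle is showing that the self-extensions of the different cohomology objects do not cancel against one another through the connecting maps; this is precisely what the homological-dimension-$2$ hypothesis provides, via the vanishing $\Ext^{\ge3}=0$ and in particular $\Ext^2(C',A)=0$, which decouples the lowest cohomology from the remainder and lets the induction close. I expect the only delicate bookkeeping to be the explicit identification of $d_0$ and $d_1$ as (pre/post)-composition with the connecting morphism $\delta\colon C'\to A[1]$, and the verification that $\operatorname{im}(d_0)\subseteq\ker(q_\ast)$; both are formal once the truncation triangle is fixed.
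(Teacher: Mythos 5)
Your proof is correct, and both of its halves reach the paper's conclusions from the same two structural inputs --- homological dimension $2$ of $\mathcal{A}$ (Lemma \ref{extdimension}) and the bound $\dim\Ext^1\geq 2$ for nonzero objects of $\mathcal{A}$ (Lemma \ref{dimext1}) --- but your route to the key estimate is technically different from the paper's. The paper invokes the spectral sequence $E^{p,q}_2=\bigoplus_i\Hom^p(\mathcal{H}^i(C),\mathcal{H}^{i+q}(C))\Rightarrow\Hom^{p+q}(C,C)$ of \cite[Lemma 4.5]{BMMS12}: since $E^{p,q}_2$ vanishes unless $p\in\{0,1,2\}$, the column $p=1$ receives and emits no differentials on any page, so $E^{1,0}_2=E^{1,0}_\infty$ is a subquotient of $\Hom^1(C,C)$ and one gets $3\geq\sum_i\hom^1(\mathcal{H}^i(C),\mathcal{H}^i(C))\geq 2r$ in one stroke. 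You instead prove the same inequality $\dim\Ext^1(C,C)\geq 2n$ by induction on the number of cohomology objects, peeling off the lowest cohomology $A$ by a truncation triangle and using the t-structure vanishing $\Hom(A,C')=0$ together with $\Ext^2(C',A)=0$ (itself a consequence of homological dimension $2$) to show that the contribution of $A$ lands in $\ker(d_1)$ while $\coker(d_0)$ surjects onto $\Ext^1(C',C')$; this is, in effect, a hands-on proof that the $p=1$ column of the spectral sequence survives, and all the steps check out (the phrase ``every map $A\to C$ factors through $A\to C$'' is garbled, but what you actually use --- that $q\circ h\in\Hom(A,C')=0$ kills $q_\ast d_0$ --- is right). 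What your version buys is self-containedness: only long exact sequences and t-structure orthogonality, no citation of the spectral sequence. What the paper's version buys is economy and reusability: the same spectral sequence, analyzed page by page, is exactly the tool needed for the harder Lemma \ref{keylemma} (the case $\hom^1(C,C)=4$, where $r=2$ must be excluded by inspecting the full second and third pages), so setting it up once serves both statements, whereas your induction only yields the inequality and would not suffice there. The stability step is the same in both proofs: once $C$ lies in $\mathcal{A}$ up to shift, $Z(w)=1$ forces the shifted object to have class $-w$, and maximal phase together with primitivity of $w$ (Remark \ref{rmkstable}) gives stability.
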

\begin{proof}
Consider the spectral sequence for objects in $\Ku(Y)$ whose second page is given by
\begin{equation}\label{spectral}
E^{p,q}_2 = \bigoplus_i \Hom^p(\mathcal{H}^i(C), \mathcal{H}^{i+q}(C)) \Longrightarrow \Hom^{p+q}(C,C)
\end{equation}
(see \cite[Lemma 4.5]{BMMS12}), where the cohomology is taken with respect to the heart $\mathcal{A}$. Since by Lemma~\ref{extdimension} the Ext-dimension of $\mathcal{A}$ is 2, it follows that $E^{1,q}_2=E^{1,q}_\infty$, so that if we take $q=0$, by Lemma~\ref{dimext1} we get
\[
3=\hom^1(C,C) \geq \bigoplus_i \hom^1(\mathcal{H}^i(C), \mathcal{H}^{i}(C)) \geq 2r,
\]
where $r>0$ is the number of non-zero cohomology objects of $C$. Then $r=1$ and $C\in \mathcal{A}$ up to a shift; if we also assume $[C]=w$, then $C$ must be stable since $w$ has maximal slope and is primitive.
\end{proof}
\begin{lemma} \label{keylemma}
Let $C \in \Ku(Y)$ of class $[C]=w$ satisfying:
\[
\hom^i(C,C)=\begin{cases}1 \qquad \text{if }\ i=0 \\ 4 \qquad \text{if }\ i=1 \\ 1 \qquad \text{if }\ i=2 \\ 0 \qquad \text{if }\ i=3\end{cases}
\]
then, up to a shift, $C \in \mathcal{A}$, and in particular $C$ is stable.
\end{lemma}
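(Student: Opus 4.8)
The plan is to mimic the argument of Lemma \ref{ext1=3}, but now we must cope with the fact that $\hom^1(C,C)=4$ instead of $3$, so the crude dimension count alone no longer immediately forces $C$ to be concentrated in a single cohomological degree. First I would invoke the spectral sequence \eqref{spectral} with respect to the heart $\mathcal A$, exactly as in Lemma \ref{ext1=3}, and record that by Lemma \ref{extdimension} the heart has homological dimension $2$, so that $E_2^{1,q}=E_\infty^{1,q}$ converges with no further differentials into or out of the column $p=1$. Writing $r$ for the number of nonzero cohomology objects $\mathcal H^i(C)$, Lemma \ref{dimext1} gives $\hom^1(\mathcal H^i(C),\mathcal H^i(C))\geq 2$ for each, so the $q=0$ part of the convergence yields
\[
4=\hom^1(C,C)\geq \sum_i \hom^1(\mathcal H^i(C),\mathcal H^i(C))\geq 2r,
\]
hence $r\leq 2$. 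The content of the lemma is therefore to rule out the case $r=2$.

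So the heart of the matter is to exclude $r=2$, and this is where the extra numerical hypotheses on $\hom^2(C,C)$ and $\hom^3(C,C)$ must be used. Assume for contradiction that $C$ has exactly two nonzero cohomology objects, say $A\coloneqq \mathcal H^a(C)$ and $B\coloneqq \mathcal H^b(C)$ with $a<b$; after shifting we may take $a=0$. I would first argue that each of $A,B$ must itself satisfy $\hom^1=2$ (rather than more), for otherwise the inequality above is violated, and then that the off-diagonal contributions $\bigoplus_i \Hom^p(\mathcal H^i(C),\mathcal H^{i+q}(C))$ on the second page cannot all cancel in the abutment. The key observation is that $\chi(C,C)=-2$ is additive, so $\chi(A,A)+\chi(B,B)+\chi(A,B)+\chi(B,A)=-2$; combined with $\hom^0(C,C)=1$ (which forces, via the $E_2^{0,0}$ term, that $\Hom(A,A)\oplus\Hom(B,B)$ has total dimension $1$, impossible since both are nonzero objects of a heart and hence have $\hom^0\geq 1$ each) I expect to reach a direct contradiction. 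Indeed the term $E_2^{0,0}=\Hom(A,A)\oplus\Hom(B,B)$ survives to $E_\infty^{0,0}$ (no differentials can hit the corner $p=q=0$), so $\hom^0(C,C)\geq 2$, contradicting $\hom^0(C,C)=1$.

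This last point is in fact the cleanest route and I would lead with it: the hypothesis $\hom^0(C,C)=1$ alone, fed through the surviving corner term $E_\infty^{0,0}=\bigoplus_i\Hom(\mathcal H^i(C),\mathcal H^i(C))$ of the spectral sequence, forces $r=1$, since each nonzero $\mathcal H^i(C)\in\mathcal A$ contributes at least a one-dimensional $\Hom$ to its own endomorphisms. Thus $C\in\mathcal A$ up to shift. Once concentration in a single degree is established, stability is immediate exactly as in Lemma \ref{ext1=3}: the class $w$ has maximal slope in $\mathcal A$ (its central charge $Z^0_{\alpha,\beta}(w)$ is real, so $w$ has infinite slope, cf. Remark \ref{rmkstable}) and is primitive in $K_{\mathrm{num}}(\Ku(Y))$, so there are no strictly semistable objects of that class and $C$ is stable.

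The main obstacle I anticipate is purely bookkeeping in the spectral sequence: one must be careful that no differential can modify the corner term $E_2^{0,0}$, and that the identification $E_2^{0,0}=\bigoplus_i\Hom(\mathcal H^i(C),\mathcal H^i(C))$ is correct (all cross terms $\Hom(\mathcal H^i,\mathcal H^j)$ with $i\neq j$ sit in columns $q\neq 0$). Because differentials $d_r$ on page $E_r$ raise total degree by $1$ and the homological dimension of $\mathcal A$ is $2$, the only possibly nonzero differentials are $d_2$, and these cannot originate from or land in the position $(0,0)$; this is exactly the same mechanism already exploited in \cite[Lemma 4.5]{BMMS12} and in Lemma \ref{ext1=3}, so no genuinely new difficulty arises. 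The remaining numerical hypotheses ($\hom^2(C,C)=1$, $\hom^3(C,C)=0$) serve only as consistency checks confirming $[C]=w$ and are not strictly needed for the concentration argument, though I would remark on their role for completeness.
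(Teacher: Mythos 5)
Your reduction to $r\leq 2$ is correct and matches the paper, but the argument you lead with to exclude $r=2$ contains a genuine error: it is not true that the corner term $E_2^{0,0}=\bigoplus_i\Hom(\mathcal H^i(C),\mathcal H^i(C))$ survives to $E_\infty^{0,0}$. In the spectral sequence \eqref{spectral} the rows $q=\pm 1$ are nonzero (they house the cross-terms between the two cohomology objects), and the second-page differential $d_2\colon E_2^{p,q}\to E_2^{p+2,q-1}$ gives an outgoing map
\[
d_2\colon E_2^{0,0}\longrightarrow E_2^{2,-1}=\bigoplus_i\Hom^2\bigl(\mathcal H^i(C),\mathcal H^{i-1}(C)\bigr),
\]
whose target need not vanish, precisely because the heart $\sA$ has homological dimension $2$ (Lemma \ref{extdimension}): these $\Ext^2$'s are exactly what can glue two adjacent cohomology objects into a complex with one-dimensional endomorphism space. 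So $E_\infty^{0,0}=\ker d_2$ can be strictly smaller than $E_2^{0,0}$, and the inequality $\hom^0(C,C)\geq r$ fails. Indeed, if your argument were valid it would show that every simple object of a triangulated category with a heart of homological dimension $2$ lies in a shift of the heart; this is false in general, e.g.\ on a K3 surface the derived dual $R\HHom(I_p,\sO_X)$ of the ideal sheaf of a point is simple but has two nonzero cohomology sheaves, glued by a class in $\Ext^2(\C_p,\sO_X)$.

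This is why the paper's proof is longer, and why the hypotheses $\hom^1=4$, $\hom^2=1$, $\hom^3=0$, which you set aside as mere ``consistency checks'', do real work. In the $r=2$ case with cohomologies $M,N$, equality in $4=\hom^1(C,C)\geq 2r$ forces $\hom^1(M,M)=\hom^1(N,N)=2$, hence $\chi(M,M)=\chi(N,N)=-1$, which pins the classes down to $\pm v$ and $\mp(w-v)$ and gives $\chi(M,N)=0$; adjacency of $M,N$ follows from $\hom^0(C,C)=1$. Then, since the surviving middle column $p=1$ already contributes $4$ in total degree $1$, the equality $\hom^1(C,C)=4$ forces both $E_\infty^{0,1}=0$ and $E_\infty^{2,-1}=0$; the latter says precisely that the differential $d_2\colon E_2^{0,0}\to E_2^{2,-1}$ you discarded is \emph{surjective}, so $\dim E_\infty^{0,0}=2-f$ with $f=\hom^2(M,N)$. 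Combining this with the vanishing of $\hom^{-1}(C,C)$ and with $\chi(M,N)=0$ (which together give $\hom^1(M,N)=f$), the degree-zero abutment reads $1=\hom^0(C,C)=(2-f)+f=2$, the desired contradiction. None of this is recoverable from the hypothesis $\hom^0(C,C)=1$ alone.
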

\begin{proof}
As in the proof of Lemma \ref{ext1=3}, we get that
\[ 4=\hom^1(C,C) \geq \bigoplus_i \hom^1(\mathcal{H}^i(C), \mathcal{H}^{i}(C)) \geq 2r,\]
which in this case yields $r=1$ or $r=2$. If $r=1$, we're done. Assume for the sake of contradiction that $r=2$, and call $M$ and $N$ the two non-zero cohomology objects of $C$. The objects $M$ and $N$ must be adjacent cohomologies: otherwise $C$ is a sum of their shifts because $\Ext^{i}(M,N)=0$ for $i>2$ (by Lemma \ref{extdimension}), but this contradicts $\hom^0(C,C)=1$. We let $M= \mathcal{H}^{j}(C)$ and $N= \mathcal{H}^{j\pm 1}(C)$.

Since $\chi(F,F)\leq -1$ for all non-zero objects of $\sA$, one necessarily has $\hom^1(M,M)=\hom^1(N,N)=2$, and therefore $\hom^0(M,M)=\hom^0(N,N)=1$ and $\hom^2(M,M)=\hom^2(N,N)=0$. In particular, $\chi(M,M)=\chi(N,N)=-1$. This means that $[M]$ and $[N]$ can only be equal to $-v$ or $w-v$ (up to a sign depending on the parity of $j$). Since $[M]+[N]= w$, the only possibilities are $[M]=-v$, $[N]=w-v$, or the converse.  
In any case, since the Euler form is symmetric and $\chi(- v,w-v)=0$, we have $\chi(M,N)=0$. 

Now we gather all this information together in the second page of the spectral sequence (\ref{spectral}). It has zeros everywhere except for the spaces:
\begin{center}
$E_2^{p,q}$\quad = \quad\begin{tabular}{|ccc}
0 & 0 & 0 \\
 $\Hom(N,M)$ & $\Hom^1(N,M)$ & $\Hom^2(N,M)$\\
$\Hom(M,M)\oplus \Hom(N,N)$ & $\Hom^1(M,M)\oplus \Hom^1(N,N)$ &0\\
\hline
 $\Hom(M,N)$ & $\Hom^1(M,N)$ & $\Hom^2(M,N)$\\
\end{tabular} 
\end{center}
Therefore, the whole sequence degenerates at page 3, and the central column survives in the limit. We denote with $a,b,c,d,e,f$ the dimensions of the spaces in the first and third row, the table of dimensions for page 2 is:
\begin{center}
$\dim E_2^{p,q}$\quad = \quad \begin{tabular}{|ccc}
0 & 0 & 0 \\
 $a$ & $b$ & $c$\\
 2 & 4 &0\\
\hline
 $d$ & $e$ & $f$\\
\end{tabular}
\end{center}
Our assumptions on $C$ constrain the dimensions on the diagonals of page 2: one sees right away that we must have $a=0$, $b=1$, $c=0$ and $d=0$, and a surjection $\Hom(M,M)\oplus \Hom(N,N) \twoheadrightarrow \Hom^2(M,N)$ with kernel of dimension $g \coloneqq 2-f$. On the other hand, we also have $e=f$ since the alternate sum of dimension on the row $q=-1$ is $\chi(M,N)=0$. This implies that $1=\hom(C,C)=g+e=g+f=2$, which is a contradiction. Then, the only possibility is that $r=1$, and $C$ only has one cohomolgy object and therefore belongs to (a shift of) $\sA$.
\end{proof}

The category $\Ku(Y)$ admits an autoequivalence called the \textit{rotation functor} $$\mathsf R(-)\coloneqq \mathbb L_{\sO_Y} ( - \otimes \sO_Y(1))$$
(see \cite[Section~3.3]{Kuz15_fractional}, in particular \cite[Corollary~3.18]{Kuz15_fractional} for the proof of the fact that $\mathsf R$ is an autoequivalence). 


\begin{lemma}\label{numfix}
The map induced on $K_{num}(\Ku(Y))$ by the rotation functor maps $w$ to $w-2v$ and $v$ to $w-v$.
\end{lemma}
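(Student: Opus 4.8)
The plan is to reduce the statement to a Riemann--Roch computation by passing to the numerical Grothendieck group. Recall from Section \ref{sec_basechange_mutations} that the left mutation across the exceptional object $\sO_Y$ sits in the triangle $\sO_Y\otimes \Hom^\bullet(\sO_Y,X)\xrightarrow{ev} X\to \mathbb{L}_{\sO_Y}(X)$. Passing to classes in $K_{num}(Y)$ and using $[\sO_Y\otimes V^\bullet]=\chi(V^\bullet)[\sO_Y]$ for a graded vector space $V^\bullet$, this gives $[\mathbb{L}_{\sO_Y}(X)]=[X]-\chi(\sO_Y,X)[\sO_Y]$. Applying it to $X=E\otimes\sO_Y(1)$ produces the master formula $[\mathsf{R}(E)]=[E\otimes\sO_Y(1)]-\chi(\sO_Y,E\otimes\sO_Y(1))\,[\sO_Y]$ for every $E\in\Ku(Y)$, so it suffices to evaluate the right-hand side on $v$ and $w$. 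Throughout I use $d=2$, hence $H^3=2$.

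First I would compute the twisted Chern characters by multiplying $\Ch(v)=1-\tfrac12H^2$ and $\Ch(w)=H-\tfrac12H^2-\tfrac13H^3$ by $e^H=1+H+\tfrac12H^2+\tfrac16H^3$ modulo $H^4$; this yields $\Ch(v\otimes\sO_Y(1))=1+H-\tfrac13H^3$ and $\Ch(w\otimes\sO_Y(1))=H+\tfrac12H^2-\tfrac13H^3$, a routine expansion.

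The key step is the two Euler characteristics, via $\chi(\sO_Y,F)=\int_Y \Ch(F)\,\td(Y)$. Here I would first record the Todd data of the Fano threefold: since $c_1(Y)=2H$ and $\chi(\sO_Y)=1$ one has $c_1 c_2=24$, hence $H\cdot c_2=12$ and $\td(Y)$ satisfies $\td_2\cdot H=\tfrac{4H^3+H\cdot c_2}{12}=\tfrac53$, with $\int_Y\td_3=1$. Extracting the degree-three part of $\Ch(F)\td(Y)$ for each twist and integrating then gives $\chi(\sO_Y,v\otimes\sO_Y(1))=\chi(\sO_Y,w\otimes\sO_Y(1))=2$. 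The only real obstacle is the bookkeeping in this step --- correctly assembling $\td(Y)$ and carrying the $H^3=2$ normalization.

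Finally I would substitute into the master formula, so that $\Ch(\mathsf{R}(v))=\Ch(v\otimes\sO_Y(1))-2$ and $\Ch(\mathsf{R}(w))=\Ch(w\otimes\sO_Y(1))-2$, and re-expand each in the basis $\{\Ch(v),\Ch(w)\}$ by matching $\ch{0}$ and $\ch{1}$; the $\ch{2}$ and $\ch{3}$ coefficients then serve as a consistency check. This reads off $\mathsf{R}(v)=w-v$ and $\mathsf{R}(w)=w-2v$, proving the claim. The same computation for general $d$ returns $\mathsf{R}(v)=w-(d-1)v$ and $\mathsf{R}(w)=w-dv$, specializing correctly at $d=2$.
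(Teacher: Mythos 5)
Your proof is correct --- every intermediate number checks out ($\Ch(v\otimes\sO_Y(1))=1+H-\tfrac13H^3$, $\Ch(w\otimes\sO_Y(1))=H+\tfrac12H^2-\tfrac13H^3$, $H\cdot c_2=12$, $\td_2\cdot H=\tfrac53$, both Euler characteristics equal to $2$), and the final expansion in the basis $\{v,w\}$ gives exactly $\mathsf{R}(v)=w-v$ and $\mathsf{R}(w)=w-2v$ --- but it takes a genuinely different route from the paper. The paper argues with explicit representative objects: it applies $\mathsf R$ to $E_p$ (of class $w$) and to the ideal sheaf of a line (of class $v$), uses $\mathbb L_{\sO_Y}(\sO_Y[1])=0$ to reduce to $\mathbb L_{\sO_Y}(I_p(1))$, and reads the class off the mutation triangle $\sO_Y^{3}\to I_p(1)\to\mathbb L_{\sO_Y}(I_p(1))$; this needs the geometric inputs $h^0(I_p(1))=3$ and $h^0(I_\ell(1))=2$ together with the vanishing of higher cohomology. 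Your argument replaces those geometric facts with the observation that the induced map on $K_{num}$ is the linear map $[E]\mapsto[E\otimes\sO_Y(1)]-\chi(\sO_Y,E\otimes\sO_Y(1))[\sO_Y]$, which can then be evaluated on the lattice generators by Riemann--Roch alone. What the paper's approach buys is the identification of the actual objects $\mathsf R(E_p)$, which is finer than numerical information; what yours buys is uniformity (the same computation handles all degrees $d$, yielding $\mathsf R(v)=w-(d-1)v$, $\mathsf R(w)=w-dv$, which I verified) and independence from any choice of representatives or cohomology computations. As a side remark, your computation lands precisely on the signs in the lemma statement, whereas the paper's proof text contains a harmless sign slip (it writes $[I_p(1)]-3[\sO_Y]=2v-w$ where the class is actually $w-2v$); your version would quietly correct this.
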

\begin{proof}
To prove the first statement, we compute $\mathsf R(E_p)$ since $[E_p]=w$. Twisting the sequence \eqref{eq_def_of_Ep} and mutating across $\sO_Y$, we see that $\mathsf R(E_p)=\mathbb L_{\sO_Y} (I_p(1))$. The latter is computed by the triangle
\begin{equation}
    \label{eq_def_of_Mp} \sO_Y^{3} \to I_p(1)\to \mathbb L_{\sO_Y} (I_p(1)),
\end{equation}
and it has class $[I_p(1)]-3[\sO_Y]=2v-w$ in $K_{num}(\Ku(Y))$. Arguing similarly, one considers the ideal sheaf of a line $l$, which has class $v$, and checks that $[L_{\sO_Y} (I_p(1))]=[I_l(1)]-2[\sO_Y]=w-v$.
\end{proof}

Now if we take an object $E \in \mathcal{M}'$, then $u(E)$ has class $\pm w$ or $\pm(2v-w)$. In the latter case we can replace $u$ with $u\circ \mathsf R$ so that $u(E)$ has class $\pm w$.  Lemmas \ref{ext1=3} and \ref{keylemma} show that there exists an integer $n$ such that $u(E)[n] \in \mathcal{A}$ and is stable of class $w$ (in particular, it has phase 1). 

We want to prove that the shift can be taken uniformly, \textit{i.e.} that, for the same $n$ and any other object $F$ in $\mathcal{M}'$, we have $u(F)[n]\in\sA$. For all such $F$, we must have $\Hom(E,F[1])=\Ext^1(E,F)\neq 0$, since $\Hom(E,F)=0$ by stability, $\Ext^3(E,F)=\Ext^{-1}(F,E)=0$ by use of the Serre functor on $\Ku(Y')$, and $\chi(E,F)=-2$ because of their numerical class. Likewise, $\Hom(F,E[1])\neq 0$. Applying $u(-)[n]$ we get $\Hom(u(E)[n],u(F)[n+1])\neq 0$ and $\Hom(u(F)[n],u(E)[n+1])\neq 0$. Now, $u(E)[n]$ has phase 1, and let $\phi \in \Z$ be the phase of $u(F)[n]$: from the above we get $1< \phi +1$ and $\phi<2$, hence $\phi=1$ and $u(F)[n]\in \sA$.

We can thus summarize the results of this section with the following roposition:

\begin{proposition}\label{map_given_by_equiv}
The equivalence $u \colon \Ku(Y') \xrightarrow{\sim} \Ku(Y)$ induces a bijection on closed points $u\colon \mathcal{M}'(k) \to \mathcal{M}(k)$.
\end{proposition}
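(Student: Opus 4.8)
The plan is to promote the preparatory lemmas into an honest bijection on closed points in three movements: normalize $u$, produce the forward map, and produce its inverse. Throughout I write $\sA'$ for the heart of $\sigma$ on $\Ku(Y')$ and recall that every closed point of $\mathcal M'$ (resp. $\mathcal M$) is a $\sigma$-stable object of class $w$.

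First I would fix the normalization already carried out above. Choosing a base object $E_0\in\mathcal M'$, the first lemma of the section gives $[u(E_0)]\in\{\pm w,\pm(2v-w)\}$; replacing $u$ by $u\circ\mathsf R$ if necessary (Lemma \ref{numfix}) I may assume $[u(E_0)]=\pm w$, and since an equivalence acts through a single homomorphism on $K_{\mathrm{num}}(\Ku(-))$, this rotation and this sign are simultaneous for every object of class $w$. By the description of $\mathcal M'$ in Theorem \ref{thm_moduliY2} (via Lemma \ref{lem_numerics} and Lemma \ref{dimensiond+1}), every $F\in\mathcal M'$ has self-extension numerics $(\hom^0,\hom^1,\hom^2,\hom^3)$ equal to $(1,3,0,0)$ or $(1,4,1,0)$; as $u$ preserves all $\Hom$-dimensions, $u(F)$ has the same numerics, so Lemma \ref{ext1=3} or Lemma \ref{keylemma} places a shift $u(F)[n_F]$ in $\sA$ as a $\sigma$-stable object of class $w$. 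The uniform-shift argument given above---using $\chi(E,F)=-2$, $\Hom(E,F)=0$, and $\Ext^3(E,F)=\Ext^{-1}(F,\tau E)^\vee=0$ to force $\Ext^1(E,F)\neq0\neq\Ext^1(F,E)$---shows that the single integer $n:=n_{E_0}$ works for all $F$. Setting $\tilde u:=u(-)[n]$, the assignment $[F]\mapsto[\tilde u(F)]$ is a well-defined map $\tilde u\colon\mathcal M'(k)\to\mathcal M(k)$, and it is injective because $\tilde u$ is an equivalence, so $\tilde u(F_1)\simeq\tilde u(F_2)$ forces $F_1\simeq F_2$.

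For surjectivity I would run the identical argument for the inverse equivalence $\tilde u^{-1}\colon\Ku(Y)\to\Ku(Y')$. Note that, after the normalization, $\tilde u$ carries the class $w$ to $w$ (this is what makes $\tilde u(E_0)$ land in $\sA$ of class $w$), hence $\tilde u^{-1}$ also sends $w$ to $w$ and no rotation is needed on this side. The crucial point is to calibrate the uniform shift for $\tilde u^{-1}$ to be zero: taking $G_0:=\tilde u(E_0)\in\mathcal M$ one has $\tilde u^{-1}(G_0)=E_0\in\sA'$, so the calibrating shift is $0$, and the same $\chi=-2$ / vanishing-$\Hom$ computation (now on $\Ku(Y)$ and using the $\tau$-invariance of $\sA'$ through Lemma \ref{extdimension}) shows $\tilde u^{-1}(G)\in\sA'$ for every $G\in\mathcal M$. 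Each such $\tilde u^{-1}(G)$ has class $w$ and maximal phase, hence is $\sigma$-stable by Remark \ref{rmkstable}, so it lies in $\mathcal M'$. Thus $\tilde u$ and $\tilde u^{-1}$ induce mutually inverse maps on closed points, which is the asserted bijection.

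The main obstacle is precisely the consistency of the discrete choices---the possible rotation by $\mathsf R$ and the cohomological shift $[n]$---across the two directions: one must guarantee that a single normalization makes both $\tilde u$ and $\tilde u^{-1}$ land in their respective hearts with the correct class and \emph{without} an extra shift, and this is exactly what the calibration $G_0=\tilde u(E_0)$ secures. A secondary point to keep honest is that the self-Ext numerics of every object of $\mathcal M'$ genuinely fall into the two cases covered by Lemmas \ref{ext1=3} and \ref{keylemma}; this is where the explicit geometry of $\mathcal M'$ from Theorem \ref{thm_moduliY2}, rather than abstract category theory, enters the argument.
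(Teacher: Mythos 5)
Your proposal is correct and follows essentially the same route as the paper: the paper's own (very terse) proof likewise rests on the preceding normalization-by-$\mathsf{R}$, the numerics Lemmas \ref{ext1=3} and \ref{keylemma}, and the uniform-shift argument, and then notes that the identical reasoning applied to $u^{-1}$ sends $\sM$ into $\sM'$, so the two induced maps are mutually inverse. Your extra care about calibrating the shift of $\tilde u^{-1}$ at the base point $G_0=\tilde u(E_0)$ and about verifying that all points of $\sM'$ have numerics $(1,3,0,0)$ or $(1,4,1,0)$ simply makes explicit what the paper leaves implicit.
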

\begin{proof} We have shown that the assignment $E\mapsto u(E)$ sends points of $\sM'$ to points of $\sM$ (and similarly, $u^{-1}$ takes $\sM$ to $\sM'$). Since $u$ is an equivalence, the induced map is a bijection on closed points. 
\end{proof}

\subsection{Universal families and convolutions}\label{sec_univ_fam_and_convolutions}
Let $\sM$ and $\sM'$ denote the moduli spaces of objects of class $w$ in $\Ku(Y)$, resp. $\Ku(Y')$. As usual, we will denote by $\mathcal{Y}$ the component isomorphic to $Y$ in $\sM$ (we use the same convention in the case of $Y'$). 
The component $\mathcal Y$ admits a universal family, whose construction we outline here (see also the proof of Proposition~\ref{prop_GiesekerVSTilt}). Let $\sI \in D^b(Y \times \mathcal Y)$ be the pull back of the ideal sheaf of the diagonal via the isomorphism $Y\times \mathcal  Y \xrightarrow{\sim} Y\times Y$. Then, define $\sE\coloneqq \mathbb R_{\sO_Y(-1)}^{\mathcal Y}(\sI)[-1]$ to be the projection to $\!^\perp\pair{\sO_Y(-1)}_\sY$ (notation as in Section~\ref{sec_basechange_mutations}). It is the desired universal family: for example the restriction of $\sE$ to a fiber $Y\simeq Y\times \set{s}$ above a closed point $s\in S$ fits in a triangle
\[ I_s \to \sO_Y(-1)[2] \to \sE_s[1], \]
whence $\sE_s=E_s$ (this triangle is \eqref{eq_def_of_Ep}). Likewise, define $\sE'$ to be the universal family above $\sY'$. 


Define the composite functor 
\[
 F\colon D(Y')\xrightarrow{\rho'} \Ku(Y') \xrightarrow{u}\Ku(Y) 
 \xrightarrow{\epsilon} D(Y)
 \]
where $\rho'$ is the natural projection, $\epsilon$ the full embedding, and $u$ an exact equivalence. 

If $F$ is a Fourier--Mukai functor, \textit{i.e.}, $F\simeq \Phi_{\mathcal G}$ for some $\mathcal G\in D^b(Y\times Y')$, then one defines 
$$ \Phi_{\mathcal G}\times \id_{\sY'} \coloneqq \Phi_{\mathcal G \boxtimes \sO_{\Delta_{\sY'}}} \colon D^b(Y'\times \sY') \to D^b(Y\times \sY') $$
and the object 
$$\tilde\sE\coloneqq \Phi_{\mathcal G}\times \id_{\sY'} (\sE') \in D^b(Y\times \sY')$$
is a family of objects of $D(Y)$ parametrized by $\sY'$, which defines a morphism $Y'\simeq \sY'\to \sM$. 

If $F$ is not a Fourier--Mukai functor, then one can use \emph{convolutions} as in \cite[Section~5.2]{BMMS12}, whose method applies without changes to our case, and produces a family $\tilde \sE \in D^b(Y\times \mathcal Y')$. 

In either case, $\tilde \sE$ can be used to show that the map defined in Proposition~\ref{map_given_by_equiv} is a morphism which restricts to an isomorphism $Y'=\sY'\simeq \sY=Y$. For a closed point $s\in\sY'$, denote by $i_s$ (resp. $i'_s$) the inclusion $Y\times \set{s} \to Y\times \sY'$ (resp. $Y'\times \set{s} \to Y'\times \sY'$). Then we have:

\begin{lemma}
For any closed point $s\in \sY'$ we have $i_s^*(\tilde\sE)\simeq F((i'_s)^*\sE')$.
\end{lemma}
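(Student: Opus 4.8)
The plan is to reduce to the convolution construction and then transport it through the two triangulated functors $(i'_s)^*$ and $F$, matching summands by the uniqueness in Lemma \ref{lem_unique_convolution}. When $F$ is of Fourier--Mukai type, say $F\simeq \Phi_{\mathcal G}$, the object $\tilde\sE=(\Phi_{\mathcal G}\times\id_{\sY'})(\sE')$ is a kernel transform and the statement is the usual base-change compatibility of Fourier--Mukai functors: $i_s^*(\Phi_{\mathcal G}\times\id_{\sY'})(\sE')\simeq \Phi_{\mathcal G}((i'_s)^*\sE')=F((i'_s)^*\sE')$. So I would concentrate on the general case, where $\tilde\sE=E_m$ is produced by hand.

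First I would restrict the resolution \eqref{eq_resolution_of_univ_family} to the fibre. Each term $\sO_{Y'}(-N_i)^{\oplus n_i}\boxtimes (L^{\otimes -r_i})^{\oplus s_i}$ is a vector bundle on $Y'\times\sY'$, hence flat over $\sY'$, so $(i'_s)^*$ is underived on $O_m^\bullet$ and yields a genuine complex of sheaves $(i'_s)^*O_m^\bullet$ on $Y'$, with terms $\sO_{Y'}(-N_i)^{\oplus n_i s_i}$. Since $F$ commutes with finite direct sums and acts only on the $Y'$-factor of $F_m^\bullet$, evaluating the $\sY'$-factor at $s$ gives the term-by-term identification $i_s^*F_m^\bullet\simeq F\bigl((i'_s)^*O_m^\bullet\bigr)$.

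Next I would exhibit two split right convolutions of this complex. On the one hand, the splitting $K_m[m]\to O_m^\bullet\to \sE'$ presents $K_m[m]\oplus\sE'$ as a right convolution of $O_m^\bullet$; as $(i'_s)^*$ and $F$ are triangulated they send (the triangles and commuting triangles defining) a convolution to a convolution, so, using $(i'_s)^*\sE'\simeq E'_s$, the object $F((i'_s)^*K_m)[m]\oplus F(E'_s)$ is a right convolution of $i_s^*F_m^\bullet$. On the other hand, applying the triangulated functor $i_s^*$ to the split convolution $G_m\simeq H_m\oplus E_m$ of $F_m^\bullet$ shows that $i_s^*H_m\oplus i_s^*E_m$ is also a right convolution of $i_s^*F_m^\bullet$.

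Finally I would compare the two by cohomological amplitude. The object $F(E'_s)$ lies in a window $[-M,0]$ uniformly in $s$, whereas $(i'_s)^*K_m$ is a derived restriction of a sheaf, so $F((i'_s)^*K_m)[m]$ is concentrated near degree $-m$; after enlarging $M$ by $\dim\sY'$ to absorb the Tor-contributions of $(i'_s)^*$, these windows are disjoint for $m\gg 0$, and the same holds for $i_s^*E_m$ and $i_s^*H_m$. Thus both decompositions meet the hypotheses of Lemma \ref{lem_unique_convolution} over the point $s$, and uniqueness of the split convolution forces $i_s^*E_m\simeq F(E'_s)$, that is $i_s^*\tilde\sE\simeq F((i'_s)^*\sE')$. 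The main obstacle I expect is precisely this amplitude bookkeeping: one must control the Tor-spread introduced by the a priori non-flat restriction of $K_m$ and verify that the resulting window stays disjoint from $[-M,0]$, so that Lemma \ref{lem_unique_convolution} genuinely separates the two summands rather than merely their direct sum.
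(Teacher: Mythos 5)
Your proposal is correct and follows essentially the same route as the paper: restrict the complex $F_m^\bullet$ to the fibre over $s$, observe that both $i_s^*\tilde\sE\oplus i_s^*H_m$ and $F((i'_s)^*\sE')\oplus F((i'_s)^*K_m[m])$ are right convolutions of $i_s^*(F_m)^\bullet$, and conclude by the uniqueness argument of Lemma \ref{lem_unique_convolution} for $m\gg 0$. Your added bookkeeping (flatness of the locally free terms, the Tor-amplitude of $(i'_s)^*K_m$, and the Fourier--Mukai case handled by base change) only makes explicit what the paper leaves implicit.
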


\begin{proof}
This is \cite[Lemma 5.3]{BMMS12}.
\end{proof}

\begin{proof}[{Proof of Theorem \ref{thm_torelliY2}}]
Whether $\tilde\sE$ is constructed with a Fourier--Mukai functor or by means of convolutions, one sees that $i_s^*(\tilde\sE)\simeq F((i'_s)^*\sE')=u(E'_s)$ for all $s\in\sY'$. In other words, $\tilde\sE$ is a family of objects of $\sM$ parametrized by $\sY'$, and it yields a proper morphism $\alpha\colon \sY'\to \sM$ with the property that $\alpha(s)\in \sM$ corresponds to the object $u(E'_s)$ for all $s\in \sY'$. Since $\sY'$ is irreducible, $\alpha$ factors through one of the components of $\sM$. We claim that $\alpha$ must factor through $\sY$. Granting the claim for a moment, smoothness of $\sY$ implies that $\alpha$ is the desired isomorphism $\sY'\xrightarrow{\sim}\sY$.

To establish the claim, observe that $\alpha$ is, in particular, a morphism dominating one of the components of $\sM$ and $\alpha$ is birational onto its image. Since $\sY'$ is rationally connected and $\alpha$ preserves this property (see for example \cite{Kol96}), its image cannot lie in $\sC$, which is not rationally connected as a consequence of Corollary \ref{cor_C_generically_embedded_by_AJ}.
\end{proof} 

\begin{remark}[Generality assumption]
The theorem should still hold assuming that $Y$ and $Y'$ are smooth, but possibly not general. In our proof, we use Corollary \ref{cor_C_generically_embedded_by_AJ}, which holds for any smooth $Y$, to distinguish the two components $\sY'$ and $\sC'$. However, if $Y$ is not general we cannot rule out that the wall-crossing of Proposition~\ref{prop_GiesekerVSTilt} introduces  additional components in $\sM_\sigma(w)$.
\end{remark}


\newcommand{\etalchar}[1]{$^{#1}$}

\end{document}